\numberwithin{equation}{section}
\newcommand{\e}{\varepsilon}
\newcommand{\p}{\mathbf p}
\newcommand{\R}{\mathbb R}
\newcommand{\N}{\mathbb N}
\newcommand{\E}{\mathbb E}
\newcommand{\Pro}{\mathbb P}
\newcommand{\vr}{\mathrm{vr}}
\newcommand{\vol}{\mathrm{vol}}
\newcommand{\tra}{\mathrm{trace}}
\newcommand{\id}{\mathrm{id}}
\newcommand{\norm}[1]{\left \lVert#1 \right\rVert}
\newcommand{\abs}[1]{\left\lvert#1 \right\rvert}
\newcommand{\X}{X_{\pi}^n}
\theoremstyle{plain}
\newtheorem{thm}{Theorem}
\newtheorem*{thm*}{Theorem}
\newtheorem{thmalpha}{Theorem}
\newtheorem{cor}{Corollary}[section]
\newtheorem{lem}[cor]{Lemma}
\newtheorem{prop}[cor]{Proposition}
\theoremstyle{definition}
\newtheorem{rmk}[cor]{Remark}
\begin{document}


\title{On the geometry of projective tensor products}

\author[O. Giladi]{Ohad Giladi}
\address{School of Mathematical and Physical Sciences, University of Newcastle, Callaghan, NSW 2308, Australia}
\email{ohad.giladi@newcastle.edu.au}
\thanks{O.G. was supported in part by the Australian Research Council}

\author[J. Prochno]{Joscha Prochno}
\address{School of Mathematics and Physical Sciences, University of Hull, Cottingham Road, Hull, HU6 7RX, United Kingdom}
\email{j.prochno@hull.ac.uk}
\thanks{J.P. was supported in part by the FWF grant FWFM 162800.}

\author[C. Sch\"utt]{Carsten Sch\"utt}
\address{Mathematisches Seminar, Christian-Albrechts-University Kiel, Ludewig-Meyn-Stra\ss e 4, 24098 Kiel, Germany}
\email{schuett@math.uni-kiel.de}

\author[N. Tomczak-Jaegermann]{Nicole Tomczak-Jaegermann}
\address{Department of Mathematical and Statistical Sciences, University of Alberta, 632 Central Academic Building, Edmonton, AB T6G 2G1, Canada}
\email{nicole.tomczak@ualberta.ca}
\thanks{A part of this work was done when N.T-J. held the
Canada Research Chair in Geometric Analysis; also supported by NSERC
Discovery Grant.}

\author[E. Werner]{Elisabeth Werner}
\address{Department of Mathematics, Case Western Reserve University, 10900 Euclid Avenue, Cleveland, OH 44106, USA}
\email{elisabeth.werner@case.edu}
\thanks{E.W. was supported by NSF grant DMS-1504701}

\date{\today}

\begin{abstract}
In this work, we study the volume ratio of the projective tensor products $\ell^n_p\otimes_{\pi}\ell_q^n\otimes_{\pi}\ell_r^n$ with $1\leq p\leq q \leq r \leq \infty$. We obtain asymptotic formulas that  are sharp in almost all cases. As a consequence of our estimates, these spaces allow for a nearly Euclidean decomposition of Kashin type whenever $1\leq p \leq q\leq r \leq 2$ or $1\leq p \leq 2 \leq r \leq \infty$ and $q=2$. Also, from the Bourgain-Milman bound on the volume ratio of Banach spaces in terms of their cotype $2$ constant, we obtain information on the cotype of these $3$-fold projective tensor products. Our results naturally generalize to  $k$-fold products $\ell_{p_1}^n\otimes_{\pi}\dots \otimes_{\pi}\ell_{p_k}^n$ with $k\in\N$ and $1\leq p_1 \leq \dots\leq p_k \leq \infty$.
\end{abstract}

\subjclass[2010]{46A32, 46B28}

\maketitle

\section{Introduction}

In the geometry of Banach spaces the volume ratio $\vr(X)$ of an $n$-dimensional normed space $X$ is defined as the $n$-th root of the volume of the unit ball in $X$ divided by the volume of its John ellipsoid. This notion plays an important role in the local theory of Banach spaces and has significant applications in approximation theory. It formally originates in the works \cite{Sza78} and \cite{STJ80}, which was influenced by the famous paper of B. Kashin \cite{Kas77} on nearly Euclidean orthogonal decompositions. Kashin discovered that for arbitrary $n\in\N$, the space $\ell_1^{2n}$ contains two orthogonal subspaces which are nearly Euclidean, meaning that their Banach-Mazur distance to $\ell_2^n$ is bounded by an absolute constant. S. Szarek \cite{Sza78}  noticed that the proof of this result  depends solely on the fact that $\ell_1^n$ has a bounded volume ratio with respect to $\ell_2^n$. In fact, it is essentially contained in the work of Szarek that if $X$ is a $2n$-dimensional Banach space, then there exist two $n$-dimensional subspaces each having a Banach-Mazur distance to $\ell_2^n$ bounded by a constant times the volume ratio of $X$ squared.  This observation by S. Szarek and N. Tomczak-Jaegermann was  further investigated in \cite{STJ80}, where the concept of volume ratio was formally introduced, its connection to the cotype 2 constant of Banach spaces was studied, and Kashin type decompositions were proved for some classes of Banach spaces, such as the projective tensor product spaces $\ell_p^n\otimes_{\pi}\ell_2^n$, $1\leq p \leq 2$.

Given two vector spaces $X$ and $Y$, their algebraic tensor product $X\otimes Y$ is the subspace of the dual space of all bilinear maps on $X\times Y$ spanned by elementary tensors $x\otimes y$, $x\in X$, $y\in Y$ (a formal definition is provided below). The theory of tensor products was established by A. Grothendieck in $1953$ in his R\'esum\'e \cite{Gro53} and has a huge impact on Banach space theory (see, e.g.,  the survey paper~\cite{Pis12}). This impact and the success of the concept of tensor products is to a large extent due to the work \cite{LP68} of J. Lindenstrauss and A. Pe\l czynski in the late sixties who reformulated Grothendieck's ideas in the context of operator ideals and made this theory accessible to a broader audience. Today, tensor products appear naturally in numerous applications, among others, in the entanglement of qubits in quantum computing, in quantum information theory in terms of (random) quantum channels (e.g., \cite{ASW10, ASW11, SWZ11} or in theoretical computer science to represent locally decodable codes \cite{E09}. For an interesting and recently discovered connection between the latter and the geometry of Banach spaces we refer the reader to \cite{BNR12}. 

The geometry of tensor products of Banach spaces is complicated, 
even if the spaces involved are of simple geometric structure. For example, the $2$-fold projective tensor product of Hilbert spaces, $\ell_2\otimes_\pi\ell_2$, is naturally identified with the Schatten trace class $S_1$, the space of all compact operators $T:\ell_2\to\ell_2$ equipped with the norm $\|T\|_{S_1}=\tra\big(\sqrt{T^*T}\big)$. 
This space does not have local unconditional structure \cite{GL74}. 
The geometric structure of triple tensor products is even more complicated  and therefore it is hardly surprising that 
very little is known about the geometric properties of these spaces. 
For instance, regarding the permanence of cotype (see below for the definition) under projective tensor products,  it was proved by N. Tomczak-Jaegermann in \cite{T74} that the space $\ell_2\otimes_\pi\ell_2$ has cotype $2$, but the corresponding question in the $3$-fold case is still open for more than $40$ years.
 G. Pisier proved in \cite{P90} and \cite{P92} that the space $L_p\otimes_\pi L_q$ has cotype $\max\left(p,q\right)$ if $p,q\in[2,\infty)$ and, till the present day,  it is unknown whether these spaces have a non-trivial cotype when $p\in(1,2)$ and $q\in(1,2]$. In the recent paper \cite{BNR12} by J. Bri\"et, A. Naor and O. Regev they showed that the spaces $\ell_p\otimes_{\pi}\ell_q\otimes_{\pi}\ell_r$ fail to have non-trivial cotype if $\frac{1}{p}+\frac{1}{q}+\frac{1}{r} \leq 1$. Their proof uses  deep results from the theory of locally decodable codes. A direct, rather surprising consequence of their work is that for $p\in(1,\infty)$ the space $\ell_p \otimes_\pi \ell_{2p/(p-1)}\otimes_\pi \ell_{2p/(p-1)}$ fails to have non-trivial cotype, while, by Pisier's result, the $2$-fold projective tensor product $\ell_{2p/(p-1)}\otimes_\pi \ell_{2p/(p-1)}$ has finite cotype. 
Let us also mention that  interest in  cotype is, to a great deal, due to a famous result of B. Maurey and G. Pisier \cite{MP76}, who showed that a Banach space $X$ fails to have finite cotype if and only if it contains $\ell_\infty^n$'s uniformly.

 In view of the various open questions and surprising results around the geometry of projective tensor products, with the present paper, we contribute to a better understanding of the geometric structure of $3$-fold projective tensor products $\ell^n_p\otimes_{\pi}\ell_q^n\otimes_{\pi}\ell_r^n$ ($1\leq p\leq q \leq r \leq \infty$) by studying one of the key notions in local Banach space geometry,  the volume ratio of these spaces. This provides new structural insight and allows,  on the one hand,  to draw conclusions regarding  cotype properties of these spaces and,  on the other hand,  to see which of these spaces allow a nearly Euclidean decomposition of Kashin type.

\medskip
\section{Presentation of the main result}\label{sec present}

Given two Banach spaces, $(X,\|\cdot\|_X)$ and $(Y, \|\cdot\|_Y)$, the projective tensor product space, denoted $X\otimes_{\pi}Y$, is the space $X\otimes Y$ equipped with the norm
\begin{align}\label{def proj norm}
\|A\|_{X\otimes_{\pi}Y} = \inf\left\{\sum_{i=1}^m\|x_i\|_X\|y_i\|_Y~:~ A = \sum_{i=1}^m x_i\otimes y_i, ~x_i \in X,~ y_i \in Y \right\}.
\end{align}
See Section~\ref{sec def tensor} below for more information on projective tensor products.

Given an $n$-dimensional Banach space $(X,\|\cdot\|_X)$, let $B_X$ denote its unit ball, and let $\mathcal E_X$ be the ellipsoid of maximal volume contained in $B_X$. The volume ratio of $X$ is defined by
\begin{align}\label{def vr}
\vr(X) = \left(\frac{\vol_n(B_X)}{\vol_n(\mathcal E_X)}\right)^{1/n},
\end{align}
where $\vol_n(\cdot)$ denotes the $n$-dimensional Lebesgue measure.

Considering the importance of tensor products, it is natural to study the geometric properties of $X\otimes_{\pi}Y$ and, in particular,  the volume ratio $\vr(X\otimes_{\pi}Y)$ of these spaces. As already mentioned in the introduction, when $X = \ell_{p}^n$ ($1 \le p \le 2$) and $Y = \ell_2^n$ this was carried out  in \cite{STJ80}. The complete answer was given later by C. Sch\"utt in \cite{Sch82}, where it was proved that if $1\leq p \leq q \leq \infty$, then
        \begin{align*}
        \vr\left(\ell_{p}^n \otimes_{\pi} \ell_{q}^n\right) \asymp_{p,q}
        \begin{cases}
        1, & q \le 2.
        \\
        n^{\frac 1 2 - \frac 1 {q}}, & p\le 2 \le q,\, \frac 1{p}+\frac 1 {q} \ge 1,
        \\
        n^{\frac  1 {p}-\frac 1 2}, & p\le 2 \le q,\, \frac 1{p}+\frac 1 {q} \le 1,
        \\
        n^{\max\left(\frac 1 2 - \frac 1 {p} - \frac 1 {q}, 0 \right)}, & p\ge 2.
        \end{cases}
        \end{align*}

The notation $\asymp_{p,q}$ means equivalence up to constants that depend only on $p$ and $q$. In \cite{DM06} this was generalized to the setting $E\otimes_\pi F$, where $E$ and $F$ are symmetric Banach sequence spaces, each either $2$-convex or $2$-concave.


We study tensor products $\ell^n_p\otimes_{\pi}\ell_q^n\otimes_{\pi}\ell_r^n$ ($1\leq p \leq q\leq r\leq \infty$). Our main result is as follows.

\begin{thmalpha}\label{main thm}
Let $n\in\N$ and $1 \le p \le q \le r \le \infty$. Then we have
\begin{align*}
\vr\left(\ell^n_p\otimes_{\pi}\ell_q^n\otimes_{\pi}\ell_r^n \right) \asymp_{p,q,r}
\begin{cases}
1, & r \le 2,
\\
n^{\max\left(\frac 1 2 - \frac 1 q - \frac 1 r, 0\right)}, & p \le 2 \le q,\, \frac 1 p + \frac 1 q + \frac 1 r \ge 1,
\\
n^{\frac 1 p - \frac 1 2}, & p \le 2 \le q, \,\frac 1 p + \frac 1 q + \frac 1 r \le 1,
\\
n^{\max\left(\frac 1 2 - \frac 1 p - \frac 1 q - \frac 1 r, 0\right)}, & p \ge 2.
\end{cases}
\end{align*}
In the case $p \le q \le 2 \le r$, we have
\begin{align*}
1 \lesssim_{p,q,r} \vr\left(\ell^n_p\otimes_{\pi}\ell_q^n\otimes_{\pi}\ell_r^n \right) \lesssim_{p,q,r} n^{\min\left(\frac 1 2 - \frac 1 r, \frac 1 q - \frac 1 2\right)}.
\end{align*}
\end{thmalpha}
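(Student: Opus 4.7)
The lower bound $1 \lesssim_{p,q,r} \vr(\ell_p^n \otimes_\pi \ell_q^n \otimes_\pi \ell_r^n)$ is immediate from $\mathcal E_X \subseteq B_X$, so only the upper bounds require work. The plan is to reduce to two boundary cases already covered by the displayed formula in Theorem~\ref{main thm}, by replacing either the middle factor $\ell_q^n$ or the last factor $\ell_r^n$ with $\ell_2^n$. A useful preliminary is that the action of permutations and coordinate sign changes on each tensor factor forces any invariant ellipsoid, and hence by uniqueness the John ellipsoid $\mathcal E_X$, to be a Euclidean ball $\rho_X B_2^{n^3}$; the same holds for both comparison spaces below, so every volume ratio of interest takes the form $\vol(B)^{1/n^3}/(\rho \,\vol(B_2^{n^3})^{1/n^3})$.

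For the estimate $\vr(X) \lesssim_{p,q,r} n^{1/q - 1/2}$, with $X := \ell_p^n \otimes_\pi \ell_q^n \otimes_\pi \ell_r^n$, compare $X$ with $Y := \ell_p^n \otimes_\pi \ell_2^n \otimes_\pi \ell_r^n$. From $q \le 2$ one has $\|y\|_2 \le \|y\|_q \le n^{1/q - 1/2}\|y\|_2$ on $\R^n$, and inserting this into~\eqref{def proj norm} term by term in any representation of $T$ transfers the inequality to the tensor level, giving $B_X \subseteq B_Y \subseteq n^{1/q - 1/2} B_X$. Taking volumes yields $\vol(B_X) \le \vol(B_Y)$, while rescaling the John ball of $Y$ into $B_X$ yields $\rho_X \ge n^{1/2 - 1/q}\rho_Y$. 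Combined via~\eqref{def vr}, these give $\vr(X) \le n^{1/q - 1/2} \vr(Y)$. The space $Y$ falls under the second case of Theorem~\ref{main thm} (the condition $\tfrac1p + \tfrac{1}{2} + \tfrac1r \ge 1$ being automatic from $p \le 2$), so $\vr(Y) \asymp_{p,r} 1$.

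The companion estimate $\vr(X) \lesssim_{p,q,r} n^{1/2 - 1/r}$ is proved in exactly the same way but replacing the last factor: compare $X$ with $Z := \ell_p^n \otimes_\pi \ell_q^n \otimes_\pi \ell_2^n$, and use $\|z\|_r \le \|z\|_2 \le n^{1/2 - 1/r}\|z\|_r$ (valid since $r \ge 2$) to obtain $B_Z \subseteq B_X \subseteq n^{1/2 - 1/r} B_Z$, hence $\vr(X) \le n^{1/2 - 1/r}\vr(Z)$. Here $Z$ sits in the first case of Theorem~\ref{main thm} because its third factor is $\ell_2^n$, so $\vr(Z) \asymp_{p,q} 1$, and taking the minimum of the two bounds finishes the proof.

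The only non-trivial step is the rigidity observation that forces $\mathcal E_X$ to be a Euclidean ball: this is what allows the norm-comparison factors to pass cleanly into multiplicative bounds on volume ratios. Everything else is a direct unwinding of~\eqref{def vr} combined with the already-established cases of Theorem~\ref{main thm}, so no Gaussian averaging or Chevet-type argument is needed for this last part of the theorem.
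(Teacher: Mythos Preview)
Your argument is correct, but it takes a genuinely different route from the paper's. The paper proves all five cases of Theorem~\ref{main thm} uniformly: it first establishes the volume estimate $\vol_{n^3}(B_\pi^n)^{1/n^3} \asymp_{p,q,r} n^{-\min(1/p,1/2)-\min(1/q,1/2)-1/r-1}$ via a $3$-fold Chevet inequality combined with Blaschke--Santal\'o, then separately bounds $\|\id:\ell_2^{n^3}\to X_\pi^n\|$ (for the case $q\le 2\le r$ this uses a recursive reduction to the known $2$-fold result), and finally multiplies these via~\eqref{formula vol ell}. You instead deduce the $p\le q\le 2\le r$ case directly from the two neighbouring cases $q=2$ and $r=2$ by the Banach--Mazur comparison $B_X\subseteq B_Y\subseteq n^{1/q-1/2}B_X$ (and the analogous chain with $Z$), together with the observation that the John ellipsoid is a round ball in all three spaces. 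This buys you a shorter, more conceptual derivation of the last estimate once the other cases are in hand, and avoids any probabilistic input for this step; the price is that your argument is not self-contained, since it rests on the cases $r\le 2$ and $p\le 2\le q$, $\frac1p+\frac1q+\frac1r\ge 1$ already having been proved by the paper's direct method.
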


\medskip

Here and in what follows $\lesssim_{p,q,r}$, $\gtrsim_{p,q,r}$ mean inequalities with implied constants that depend only on the parameters $p,q,r$. The notation $\asymp_{p,q,r}$ means that we have both $\lesssim_{p,q,r}$ and $\gtrsim_{p,q,r}$.

\begin{rmk}\label{rem: special cases}
        We would like to remark that whenever $1\leq p\leq q \leq 2 \leq r \leq \infty$, we are able to improve on the general bound given by Theorem \ref{main thm} in the following situations (see Corollary \ref{cor:improvement open case}):
        \begin{align*}
        \vr\left(\ell^n_p\otimes_{\pi}\ell_q^n\otimes_{\pi}\ell_r^n \right) \asymp_{p,q,r}
        \begin{cases}
        n^{\min\left(\frac{1}{q}-\frac{1}{2},\frac{1}{2}-\frac{1}{r}\right)}, &  p=1 \leq q \leq 2 \leq r, \\
        n^{\frac{1}{q}-\frac{1}{2}}, & p = q \leq 2,\, r=\infty,\\
        1, &  1\leq p \leq q =2 \leq r.
        \end{cases}
        \end{align*}        
\end{rmk}

\begin{rmk}
 Theorem \ref{main thm} and Remark \ref{rem: special cases} immediately imply that the spaces $\ell^n_p\otimes_{\pi}\ell_q^n\otimes_{\pi}\ell_r^n$ allow a nearly Euclidean decomposition of Kashin type, when $1\leq p \leq q \leq r \leq 2$ or $1\leq p \leq 2 \leq r \leq \infty$ and $q=2$.
\end{rmk}

Before we proceed, let us comment on the strategy of the proof. Recall first that a Banach space $(X,\|\cdot\|_X)$ is said to have enough symmetries if the only operators that commute with every isometry on $X$ are multiples of the identity. It is known that if $(X,\|\cdot\|_X)$ is $n$-dimensional and has enough symmetries, then $\mathcal E_X$ is given by
\begin{align*}
\mathcal E_X = \left\|\id: \ell_2^n \to X\right\|^{-1} B_2^n,
\end{align*}
where $B_2^n$ denotes the $n$-dimensional Euclidean ball (see, for instance,~\cite[Sec.~16]{TJ89}). Hence, using formula~\eqref{def vr}, if $(X,\|\cdot\|_X)$ is $n$-dimensional and has enough symmetries, then
\begin{align}\label{formula vol ell}
\vr(X) =  \left(\frac{\vol_n(B_X)}{\vol_n(B_2^n)}\right)^{1/n} \big\|\id:\ell_2^n\to X\big\|  \stackrel{(*)}{\asymp} \sqrt n\big(\vol_n(B_X)\big)^{1/n} \big\|\id:\ell_2^n\to X\big\|,
\end{align}
where in ($*$) we used Stirling's formula to deduce $\vol(B_2^n)^{1/n} \asymp 1/\sqrt n$. It is also known that projective tensor products of $\ell_p$ spaces are spaces with enough symmetries and therefore formula \eqref{formula vol ell} holds for the spaces $\ell^n_p\otimes_{\pi}\ell_q^n\otimes_{\pi}\ell_r^n$. Thus, in order to prove Theorem \ref{main thm}, it is enough to compute the volume of the unit ball of $\ell^n_p\otimes_{\pi}\ell_q^n\otimes_{\pi}\ell_r^n$, and the norm of the natural identity between $\ell_2^{n^3}$ and $\ell^n_p\otimes_{\pi}\ell_q^n\otimes_{\pi}\ell_r^n$.

  The rest of this paper is organized as follows. In Section~\ref{sec prelim} we collect some basic results which will be used later. In Section~\ref{sec vol} we estimate the volume of the unit ball in $\ell_p^n \otimes_{\pi} \ell_q^n \otimes_{\pi} \ell_r^n$. In Section~\ref{sec norm} we estimate the norm of the natural identity. In Section~\ref{sec k fold} we discuss the case of $k$-fold tensor products. Finally, in Section~\ref{sec applications}, we present some applications of Theorem~\ref{main thm}.

\medskip
\section{Preliminaries}\label{sec prelim}

In this section, we introduce the necessary notions and background material
and provide the main ingredients needed to prove the estimates for the volume ratio of $3$-fold projective tensor products. These include an extension of Chevet's inequality, a lower bound on the volume of unit balls in Banach spaces due to Sch\"utt, the famous Blaschke-Santal\'o inequality, and a multilinear version of an inequality of Hardy and Littlewood.


\subsection{General notation}

Given a Banach space $(X, \|\cdot\|_X)$, denote its unit ball by $B_X$ and its dual space by $X^*$. For two Banach spaces $X$ and $Y$, we write $\mathcal L(X,Y)$ for the space of all bounded linear operators from $X$ to $Y$.

For $1 \leq p \leq \infty$,  $\ell_p^n$ is  the vector space $\R^n$  with the norm
\begin{align*}
\|(x_i)_{i=1}^n\|_p =
\begin{cases}
\big(\sum_{i=1}^n |x_i|^p\big)^{1/p}\,, & 1\leq p <\infty \\
\max\limits_{1 \leq i \leq n}|x_i|\,,& p=\infty.
\end{cases}
\end{align*}
The unit ball in $\ell_p^n$ is denoted by $B_p^n=\{ x\in\R^n \,:\, \|x\|_p\leq 1\}$. 
The conjugate $p^*$ of $p$  is defined via the relation $\frac{1}{p}+\frac{1}{p^*}=1$. Unless otherwise stated, $e_1,\dots,e_n$, will  be the standard unit vectors in $\R^n$.

We shall also use the asymptotic notations $\lesssim$ and $\gtrsim$ to indicate the corresponding inequalities up to universal constant factors, and we shall denote equivalence up to universal constant factors by $\asymp$, where $ A \asymp B$ is the same as $(A \lesssim B) \land (A \gtrsim B)$. If the constants involved depend on a parameter $\alpha$, we denote this by $\lesssim_\alpha$, $\gtrsim_\alpha$ and $\asymp_\alpha$, respectively.


\subsection{Polar body and Blaschke-Santal\'o inequality}

A convex body $K$ in $\R^n$ is a compact convex subset of $\R^n$ with non-empty interior. The $n$-dimensional Lebesgue measure of a convex body $K\subseteq \R^n$ is denoted by $\vol_n(K)$. If $0$ is an interior point of a convex body $K$ in $\R^n$, we define the polar body of $K$ by
\[
K^\circ:=\big\{ y\in\R^n \,:\, \forall x \in K:\, \langle x,y \rangle \leq 1 \big\},
\]
where $\langle \cdot, \cdot\rangle$ stands for the standard inner product on $\R^n$. Note that the unit ball of any norm on $\R^n$ is a convex body and its polar body is just the unit ball of the corresponding dual norm. Moreover, we have $(K^\circ)^\circ=K$. The famous Blaschke-Santal\'o inequality provides a sharp upper bound for the volume product of a convex body with its polar (see, for example,~\cite[Sec.~7]{Pis89} or \cite{Schn14}).
\begin{lem}[Blaschke-Santal\'o inequality]\label{lem:blaschke santalo}
        Let $K$ be an origin symmetric convex body in $\R^n$. Then
        \[
        \vol_n(K)\cdot\vol_n(K^\circ) \leq \vol_n(B_2^n)^2,
        \]
        with equality if and only if $K$ is an ellipsoid.
\end{lem}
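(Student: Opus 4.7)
I would prove Lemma~\ref{lem:blaschke santalo} by the classical Steiner symmetrization argument. Let $K\subseteq\R^n$ be an origin-symmetric convex body and let $H$ be a hyperplane through the origin. Define $S_H(K)$, the Steiner symmetrization of $K$ with respect to $H$, by replacing each chord of $K$ orthogonal to $H$ by a symmetric chord of the same length centered on $H$. Then $S_H(K)$ is convex and origin-symmetric, and $\vol_n(S_H(K))=\vol_n(K)$ by Fubini's theorem.

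The key technical input, due to Meyer and Pajor, is the inclusion $S_H(K^\circ)\subseteq (S_H(K))^\circ$, valid because $K$ is origin-symmetric. Given a point $y\in S_H(K^\circ)$ one writes $y$ as the midpoint of a symmetric fiber of $K^\circ$ over a point in $H$; pairing against an analogous description of an $x\in S_H(K)$, one verifies $\langle x,y\rangle\leq 1$ directly, the origin-symmetry of $K$ being precisely what makes the cross terms vanish. Combined with volume preservation, this yields
\[
\vol_n(K)\cdot\vol_n(K^\circ) \leq \vol_n(S_H(K))\cdot\vol_n((S_H(K))^\circ),
\]
so the volume product is monotone non-decreasing under every Steiner symmetrization through the origin. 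A classical theorem of Gross then produces a sequence of such symmetrizations whose iterates $K_m$ converge in the Hausdorff metric to a Euclidean ball $B$ with $\vol_n(B)=\vol_n(K)$. Continuity of the volume product on origin-symmetric bodies gives, in the limit,
\[
\vol_n(K)\cdot\vol_n(K^\circ) \leq \vol_n(B)\cdot\vol_n(B^\circ) = \vol_n(B_2^n)^2,
\]
where the final equality uses that the polar of a ball of radius $\rho$ is a ball of radius $1/\rho$.

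The main obstacle is the equality case. The volume product is $GL(n)$-invariant via $(TK)^\circ=(T^*)^{-1}K^\circ$ and multiplicativity of determinants, so every ellipsoid attains equality. For the converse, one must extract strict inequality from the Meyer--Pajor step whenever $K$ fails to be Steiner-symmetric with respect to the chosen hyperplane. I would argue by compactness and uniqueness: the functional $K\mapsto \vol_n(K)\cdot\vol_n(K^\circ)$ attains its maximum on the collection of origin-symmetric convex bodies of fixed volume taken up to linear transformation (a compact space in the Banach--Mazur topology), and any maximiser must be invariant under every Steiner symmetrization through the origin, hence equal to a Euclidean ball. Lifting back by the $GL(n)$-action recovers ellipsoids as the full equality case. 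The delicate part is establishing that the Meyer--Pajor inclusion is strict precisely when $K$ is not already Steiner-symmetric with respect to $H$, which is needed to rule out non-ellipsoidal maximisers.
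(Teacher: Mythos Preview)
The paper does not prove this lemma; it is quoted as a classical result with references to \cite{Pis89} and \cite{Schn14}. Your overall strategy via Steiner symmetrization is indeed the modern route (Meyer--Pajor), but the key step as you state it is incorrect.

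The inclusion $S_H(K^\circ)\subseteq (S_H K)^\circ$ is \emph{false} for origin-symmetric $K$ in general. Take $n=2$, let $H$ be the first coordinate axis, and let $K$ be the parallelogram with vertices $(1,2),(1,0),(-1,-2),(-1,0)$. Then $S_H K=[-1,1]^2$, so $(S_H K)^\circ=\{(a,b):|a|+|b|\le 1\}$. On the other hand $K^\circ$ is the parallelogram with vertices $(1,0),(1,-1),(-1,0),(-1,1)$, whose Steiner symmetral is $[-1,1]\times[-\tfrac12,\tfrac12]$; the point $(1,\tfrac12)$ lies in $S_H(K^\circ)$ but not in $(S_H K)^\circ$. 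In the pairing computation you sketch, writing $x=\tfrac12(a+\sigma_H b)$ and $y=\tfrac12(p+\sigma_H q)$ with $a,b\in K$ and $p,q\in K^\circ$, the cross term $\langle a,\sigma_H q\rangle$ is not controlled: origin symmetry of $K$ yields $-q\in K^\circ$, not $\sigma_H q\in K^\circ$, so there is nothing forcing $\langle a,\sigma_H q\rangle\le 1$. In the example above this cross term equals $3$.

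What Meyer--Pajor actually establish is the volume inequality $\vol_n\big((S_H K)^\circ\big)\ge \vol_n(K^\circ)$ directly, with no set-theoretic inclusion; the argument expresses the polar volume fiberwise and uses a convexity property along shadow systems. With that correction in place, the remainder of your plan (iterated symmetrizations converging to a ball, continuity of the volume product, and $GL(n)$-invariance for the equality case) is the right shape, the equality analysis being, as you note, the delicate part.
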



\subsection{Tensor products and extended Chevet inequality}\label{sec def tensor}

Given two Banach spaces $(X,\|\cdot\|_X)$ and $(Y, \|\cdot\|_Y)$, the algebraic tensor product $X\otimes Y$ can be constructed as the space of linear functionals on the space of all bilinear forms on $X\times Y$. Given $x\in X$, $y\in Y$ and a bilinear form $B$ on $X\times Y$, we define $(x\otimes y)(B) := B(x,y)$. On the tensor product space define the projective tensor product space, denoted by $X\otimes_\pi Y$, as $X\otimes Y$ equipped with the norm
\begin{align*}
\|A\|_{X\otimes_{\pi}Y} := \inf\left\{\sum_{i=1}^m\|x_i\|_X\|y_i\|_Y \,:\, A = \sum_{i=1}^mx_i\otimes y_i, ~ x_i \in X, ~ y_i \in Y\right\}.
\end{align*}
Also, define the injective tensor norm space, denoted $X\otimes_{\epsilon}Y$, as the tensor product space $X\otimes Y$ equipped with the norm
\begin{align*}
\|A\|_{X\otimes_{\epsilon}Y} := \sup\left\{\Big|\sum_{i=1}^n\varphi(x_i)\psi(y_i) \Big|\,:\, A = \sum_{i=1}^m x_i \otimes y_i,~ \varphi\in B_{X^*}, ~\psi\in B_{Y^*}\right\}.
\end{align*}
Here, we only consider finite-dimensional spaces and therefore the tensor products are always complete. In this case, it can be shown that $X\otimes_\pi Y = \mathcal N(X^*,Y)$, the space of all nuclear operators from $X^*$ into $Y$. We will often use the fact that $\left(X\otimes_{\pi}Y\right)^*$, the dual space of $X\otimes_{\pi}Y$, is the space of operators from $X^*$ to $Y$,  $\mathcal L(X^*,Y)$, equipped with the standard operator norm. It is also known that $\mathcal L(X^*,Y)$ can be identified with the injective tensor product $X\otimes_{\epsilon}Y$. In particular, for $A\in X\otimes_{\epsilon}Y$, we have
\begin{align*}
\|A\|_{X\otimes_{\epsilon}Y} = \sup_{x\in B_{X^*}}\|Ax\|_Y.
\end{align*}
We refer the reader to \cite{Rya02} and~\cite{DFS08} for more information about tensor products. 

Recall that for a sequence $x=(x_i)_{i=1}^n$ in a Banach space $(X,\|\cdot\|_X)$, the norm $\|x\|_{\omega,2}$ is given by
$$
\norm{x}_{w,2} := \sup_{\norm{\varphi^*}_{X^*}=1} \left( \sum_{i=1}^n \abs{\varphi^*(x_i)}^2 \right)^{\frac{1}{2}}.
$$

The following inequality is due to Chevet \cite{Che78}.

\begin{lem}[Chevet's inequality]
        Let $(X,\|\cdot\|_X)$, $(Y,\|\cdot\|_Y)$ be two Banach spaces and consider  sequences $x_1,\dots,x_m\in X$, $y_1,\dots,y_n\in Y$, and sequences $(g_{i,j})_{i,j=1}^{m,n}$, $(\xi_i)_{i=1}^m$, $(\eta_j )_{j=1}^n$ of independent identically distributed standard Gaussians random variables. Then
        \begin{align}\label{ineq chevet 2}
        \mathbb E\,\bigg\|\sum_{i, j=1}^{m,n} g_{i,j} x_{i} \otimes y_{j} \bigg\|_{X\otimes_{\epsilon}Y}
        & \le \norm{(x_{i})_{i=1}^m}_{\omega,2}\mathbb E\,\bigg\|\sum_{j=1}^n\eta_{j}y_{j}\bigg\|_{Y}+\norm{(y_{j})_{j=1}^n}_{\omega,2}\mathbb E\,\bigg\|\sum_{i=1}^m\xi_{i}x_{i}\bigg\|_{X}.
        \end{align}
\end{lem}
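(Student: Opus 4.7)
The plan is to realise both sides as expected suprema of centred Gaussian processes on a common index set and then invoke a Slepian-type Gaussian comparison. By the duality
\[
\|A\|_{X \otimes_\epsilon Y} = \sup_{\varphi \in B_{X^*},\, \psi \in B_{Y^*}} \sum_{i,j} \alpha_{ij}\, \varphi(x_i)\, \psi(y_j) \qquad \text{for } A = \sum_{i,j} \alpha_{ij}\, x_i \otimes y_j
\]
(the absolute value disappears because $B_{X^*}$ is symmetric), the left-hand side of the inequality equals $\E \sup_T Z_{\varphi,\psi}$ for $Z_{\varphi,\psi} := \sum_{i,j} g_{ij}\,\varphi(x_i)\,\psi(y_j)$, a centred Gaussian process on $T := B_{X^*} \times B_{Y^*}$.

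The heart of the proof is to compare $Z$ with the additive process
\[
W_{\varphi, \psi} := \|(x_i)\|_{\omega, 2} \sum_{j=1}^n \eta_j\, \psi(y_j) + \|(y_j)\|_{\omega,2} \sum_{i=1}^m \xi_i\, \varphi(x_i),
\]
built from Gaussian sequences $(\xi_i), (\eta_j)$ independent of $(g_{ij})$ and of each other. Because the two summands of $W$ depend on independent Gaussians indexed over disjoint sets, the supremum over $T$ splits, and by the same duality as above applied separately in each factor,
\[
\E \sup_T W_{\varphi,\psi} = \|(x_i)\|_{\omega,2}\,\E\Bigl\|\sum_j \eta_j y_j\Bigr\|_Y + \|(y_j)\|_{\omega,2}\,\E\Bigl\|\sum_i \xi_i x_i\Bigr\|_X,
\]
which is exactly the right-hand side of Chevet's inequality. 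Hence the problem reduces to proving $\E \sup_T Z \le \E \sup_T W$, and by the Slepian--Sudakov--Fernique comparison lemma this is in turn equivalent to the pointwise increment inequality $\E(Z_{\varphi,\psi}-Z_{\varphi',\psi'})^2 \le \E(W_{\varphi,\psi}-W_{\varphi',\psi'})^2$ for all pairs of points in $T$.

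The main technical obstacle is this increment inequality. My approach would be to use the telescoping identity
\[
\varphi(x_i)\psi(y_j) - \varphi'(x_i)\psi'(y_j) = (\varphi - \varphi')(x_i)\,\psi(y_j) + \varphi'(x_i)\,(\psi - \psi')(y_j),
\]
then apply Minkowski's inequality in $\ell_2^{mn}$ so that the $L^2$-norm of the difference splits as a sum of two terms of product form, and finally invoke the defining bound $\sum_i \varphi(x_i)^2 \le \|(x_i)\|_{\omega,2}^2$ for $\varphi \in B_{X^*}$ together with its analogue for $\psi$. Some care is needed to organise the cross terms so that no unwanted constants appear; a convenient way to do this is to first reduce to the sphere $\{\|\varphi\|_{X^*} = \|\psi\|_{Y^*} = 1\}$, which is permissible because both $Z$ and $W$ are bilinear in their indices so that the two suprema are unchanged by this restriction.
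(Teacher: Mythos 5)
The paper does not prove this lemma at all --- it is quoted from Chevet's paper \cite{Che78} --- so your proposal has to stand on its own, and it does not: the step you defer as ``the main technical obstacle'' is not merely technical, it is false. Writing $a=(\varphi(x_i))_i$, $b=(\psi(y_j))_j$, $A=\|(x_i)\|_{\omega,2}$, $B=\|(y_j)\|_{\omega,2}$, your increment inequality asks that
\begin{equation*}
\|a\otimes b-a'\otimes b'\|_{2}^{2}\;\le\;A^{2}\|b-b'\|_{2}^{2}+B^{2}\|a-a'\|_{2}^{2}
\end{equation*}
for all admissible pairs, and this fails. Take $X=Y=\ell_\infty^2$, $x_i=y_i=e_i$ (so $A=B=1$, and $a,b$ range over $B_2^2$ as $\varphi,\psi$ range over $B_{\ell_1^2}$), and choose $\varphi=\psi=(1,0)$, $\varphi'=(0.95,0.05)$, $\psi'=(0.6,-0.4)$, all of $\ell_1$-norm one. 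Then $\|a\otimes b-a'\otimes b'\|_2^2=0.3306$ while $\|b-b'\|_2^2+\|a-a'\|_2^2=0.325$. Note that this counterexample already lies on the unit spheres of $X^*$ and $Y^*$, so your proposed reduction to $\|\varphi\|_{X^*}=\|\psi\|_{Y^*}=1$ does not rescue the argument: what the comparison would need is $\|a\|_2=A$, $\|b\|_2=B$ exactly, and the Euclidean length of the coefficient vector of a norm-one functional can be strictly smaller than the weak-$\ell_2$ norm. The Minkowski/telescoping computation shows the same defect from the other side: it leaves the cross term $2\|a\|_2\|b-b'\|_2\|a-a'\|_2\|b'\|_2$, which cannot be absorbed, since the target inequality itself is false. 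Consequently Sudakov--Fernique cannot be applied with your additive process $W$.

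The comparison strategy itself is the classical one and can be repaired, but the comparison process must be chosen differently (this is Gordon's trick, essentially Chevet's original route). Take instead
\begin{equation*}
V_{\varphi,\psi}\;:=\;\Big(\sum_{j=1}^n\psi(y_j)^2\Big)^{1/2}\sum_{i=1}^m\xi_i\varphi(x_i)\;+\;A\sum_{j=1}^n\eta_j\psi(y_j),
\end{equation*}
i.e.\ the coefficient of the first block is the $\psi$-dependent length $\|b\|_2$, not the constant $B$. Then
\begin{equation*}
\E(V_{\varphi,\psi}-V_{\varphi',\psi'})^2-\E(Z_{\varphi,\psi}-Z_{\varphi',\psi'})^2
= A^{2}\|b-b'\|_2^{2}-2\langle a,a'\rangle\big(\|b\|_2\|b'\|_2-\langle b,b'\rangle\big)
\;\ge\; A^{2}\big(\|b\|_2-\|b'\|_2\big)^{2}\;\ge\;0,
\end{equation*}
using $\langle a,a'\rangle\le A^2$ and Cauchy--Schwarz, so Sudakov--Fernique gives $\E\sup Z\le\E\sup V$. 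Finally, since $\sup_{\varphi\in B_{X^*}}\sum_i\xi_i\varphi(x_i)=\|\sum_i\xi_ix_i\|_X\ge 0$ and $\|b\|_2\le B$, one has $\sup_{\varphi,\psi}V\le B\,\|\sum_i\xi_ix_i\|_X+A\,\|\sum_j\eta_jy_j\|_Y$, and taking expectations yields exactly \eqref{ineq chevet 2}. So your outline becomes a proof only after replacing $W$ by $V$ and adding this last splitting step; as written, the central comparison is unavailable.
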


It is known that $B_{X^*\otimes_{\pi} Y^*} = \mathrm{conv}\left(B_{X^*}\otimes B_{Y^*}\right)$ (see, for example, Proposition 2.2 in \cite{Rya02}) and thus
\begin{align}\label{weak norm prod}
\big\|(x_{i}\otimes y_{j})_{i,j}\big\|_{\omega,2} = \big\|(x_{i})_{i}\big\|_{\omega,2} \cdot \big\|(y_{j})_{j}\big\|_{\omega,2}.
\end{align}
Using inequalities~\eqref{ineq chevet 2} and~\eqref{weak norm prod}, we obtain the following $3$-fold version of Chevet's inequality.

\begin{lem}[$3$-fold Chevet inequality]\label{cor chevet}
        Let $(X,\|\cdot\|_X)$, $(Y,\|\cdot\|_Y)$, $(Z,\|\cdot\|_Z)$ be Banach spaces. Assume that $x_1,\dots, x_m \in X$, $y_1, \dots, y_n \in Y$ and $z_1, \dots, z_{\ell} \in Z$.  Let $g_{i,j,k}$, $\xi_i$, $\eta_j$, $\rho_k$,  $i = 1,\dots,m$, $j=1,\dots,n$, $k=1,\dots,\ell$, be independent standard Gaussians random variables. Then
        \[
        \E\, \bigg\|\sum_{i,j,k=1}^{m,n,\ell} g_{ijk} \,x_i \otimes y_j \otimes z_k\bigg\|_{X\otimes_{\epsilon}Y\otimes_{\epsilon}Z}
        \leq  \Lambda,
        \]
        where
        \begin{eqnarray*}
                \Lambda
                & := & \norm{(x_i)_{i=1}^m}_{w,2}\norm{(y_j)_{j=1}^n}_{w,2}\E\,\bigg\|\sum_{k=1}^{\ell}\rho_kz_k\bigg\|_Z
                + \norm{(x_i)_{i=1}^m}_{w,2}\norm{(z_k)_{k=1}^\ell}_{w,2}\E\,\bigg\|\sum_{j=1}^{n}\eta_jy_j\bigg\|_Y \\
                && + \norm{(y_j)_{j=1}^n}_{w,2}\norm{(z_k)_{k=1}^\ell}_{w,2}\E\,\bigg\|\sum_{i=1}^{m}\xi_ix_i\bigg\|_X .
        \end{eqnarray*}
\end{lem}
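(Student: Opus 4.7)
The plan is to bootstrap from the $2$-fold Chevet inequality (displayed as~\eqref{ineq chevet 2}) by invoking associativity of the injective tensor product. More precisely, I identify $X\otimes_{\epsilon}Y\otimes_{\epsilon}Z$ with $X\otimes_{\epsilon}(Y\otimes_{\epsilon}Z)$, re-index the three-fold Gaussian array $(g_{ijk})$ as a doubly-indexed family $(g_{i,J})$ with multi-index $J=(j,k)$, and regard $w_J := y_j\otimes z_k$ as a single vector in the Banach space $W := Y\otimes_{\epsilon}Z$. Then
\[
\sum_{i,j,k} g_{ijk}\, x_i\otimes y_j\otimes z_k \;=\; \sum_{i,J} g_{i,J}\, x_i\otimes w_J \in X\otimes_{\epsilon}W,
\]
and the $2$-fold Chevet inequality applied to the right-hand side yields
\[
\E\Big\|\sum_{i,J} g_{i,J}\, x_i\otimes w_J\Big\|_{X\otimes_{\epsilon}W}
\leq \norm{(x_i)}_{w,2}\,\E\Big\|\sum_J \eta_J w_J\Big\|_{W}
+ \norm{(w_J)}_{w,2}\,\E\Big\|\sum_i \xi_i x_i\Big\|_{X}.
\]

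Next I compute the two quantities involving $W$. For the weak-$\ell_2$ norm, I use that in finite dimensions $(Y\otimes_{\epsilon}Z)^*=Y^{*}\otimes_{\pi}Z^{*}$, so that by the fact $B_{Y^*\otimes_\pi Z^*}=\mathrm{conv}(B_{Y^*}\otimes B_{Z^*})$ the supremum in $\|\cdot\|_{w,2}$ can be taken over elementary tensors $\varphi\otimes\psi$ with $\|\varphi\|_{Y^*}=\|\psi\|_{Z^*}=1$; this is precisely the content of~\eqref{weak norm prod}, giving $\|(w_J)\|_{w,2}=\|(y_j)\|_{w,2}\|(z_k)\|_{w,2}$. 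For the Gaussian average on $W$, I apply the $2$-fold Chevet inequality once more, this time to $\sum_{j,k}\eta_{jk}\, y_j\otimes z_k$ inside $Y\otimes_{\epsilon}Z$, obtaining
\[
\E\Big\|\sum_{j,k}\eta_{jk}\,y_j\otimes z_k\Big\|_{Y\otimes_{\epsilon}Z}
\leq \norm{(y_j)}_{w,2}\,\E\Big\|\sum_k \rho_k z_k\Big\|_Z
+ \norm{(z_k)}_{w,2}\,\E\Big\|\sum_j \eta_j y_j\Big\|_Y.
\]

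Substituting these two observations into the previous estimate produces exactly the three terms making up $\Lambda$, completing the argument. The main conceptual point is the reindexing step combined with associativity of $\otimes_\epsilon$: once the problem is cast in the form $X\otimes_{\epsilon}W$, the whole proof is a double application of the known $2$-fold Chevet bound together with the multiplicativity identity~\eqref{weak norm prod}. I do not anticipate a real obstacle; the only place to be careful is ensuring that the Gaussians can be grouped in this way, which is immediate because an i.i.d.\ family of standard Gaussians indexed by a product set is the same object whether one indexes it by $(i,j,k)$ or by $(i,J)$ with $J=(j,k)$, and because the associativity $X\otimes_{\epsilon}Y\otimes_{\epsilon}Z\cong X\otimes_{\epsilon}(Y\otimes_{\epsilon}Z)$ is a standard isometric identification in finite dimensions.
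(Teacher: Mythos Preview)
Your proposal is correct and follows essentially the same approach as the paper: the paper states that the $3$-fold inequality is obtained ``using inequalities~\eqref{ineq chevet 2} and~\eqref{weak norm prod},'' and you have spelled out precisely this argument---apply the $2$-fold Chevet bound to $X\otimes_{\epsilon}(Y\otimes_{\epsilon}Z)$, invoke~\eqref{weak norm prod} for the weak-$\ell_2$ norm of the $(y_j\otimes z_k)$, and apply Chevet once more to the remaining $Y\otimes_{\epsilon}Z$ average.
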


We would like to point out that, simply using the triangle inequality, a corresponding lower bound can be obtained up to an absolute constant.


\subsection{Volume ratio and Rademacher cotype }

The concept of Rademacher cotype was introduced to Banach space theory by J. Hoffmann-J\o rgensen~\cite{HJ74} in the early $1970$s. The basic theory was developed by B. Maurey and G. Pisier \cite{MP76}.

A Banach space $(X,\|\cdot\|_X)$, is said to have Rademacher cotype $\alpha$ if there exists a constant $C\in(0,\infty)$ such that for all $m\in \mathbb N$ and all $x_1,\dots,x_m\in X$,
\begin{align}\label{ineq ecotype}
\Big(\sum_{i=1}^m\|x_i\|_X^\alpha\Big)^{1/\alpha} \le C\, \mathbb E\,\Big\|\sum_{i=1}^m\e_ix_i\Big\|_X.
\end{align}
In~\eqref{ineq ecotype} and in what follows, $(\varepsilon_i)_{i=1}^\infty$ denotes a sequence of independent symmetric Bernoulli random variables, that is, $\Pro(\varepsilon_i=1)=\Pro(\varepsilon_i=-1)=\frac{1}{2}$ for all $i=1,\dots,n$. The smallest $C$ that satisfies \eqref{ineq ecotype} is denoted by $C_\alpha(X)$ and called the cotype $\alpha$ constant of $X$. By taking $x_1=x_2=\dots=x_m$ it follows that necessarily $\alpha \ge 2$.

In \cite{BNR12} it was shown that, whenever $\frac{1}{p} + \frac{1}{q} + \frac{1}{r} \leq 1$, then
\begin{align*}
C_\alpha\left(\ell_{p}\otimes_{\pi}\ell_{q}\otimes_{\pi}\ell_{r}\right) = \infty,
\end{align*}
for every $2\le \alpha < \infty$. However, it is still an open question whether for example $\ell_2 \otimes_\pi \ell_2 \otimes_\pi \ell_2$ has finite cotype.

One reason to study  volume ratio of tensor products is its relation to the cotype constant. More precisely, given an estimate on volume ratio, one can use the following result 
by Bourgain and Milman~\cite{BM87}) which connects volume ratio and cotype of a Banach space.

\begin{thm}[\cite{BM87}]\label{thm BM}
        Let $X$ be a Banach space. Then
        \begin{align*}
        \vr(X) \lesssim C_2(X)\log \left(2C_2(X)\right).
        \end{align*}
\end{thm}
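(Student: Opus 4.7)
The plan is to use the classical route through Urysohn's inequality, the duality between type and cotype for K-convex spaces, and Pisier's bound on the K-convexity constant by the logarithm of the cotype $2$ constant. The argument is very short once these black boxes are granted.

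First, since the volume ratio is invariant under invertible linear transformations, I may assume that $B_2^n$ is the John ellipsoid of $B_X$. Then $\vr(X)^n = \vol_n(B_X)/\vol_n(B_2^n)$, and Urysohn's inequality gives
\[
\vr(X) \le w(B_X) := \int_{S^{n-1}} \|u\|_{X^*}\,\dif \sigma(u).
\]
Converting to a Gaussian integral via the standard polar decomposition of a standard Gaussian vector $G = \sum_{i=1}^n g_i e_i$ in $\R^n$ (using that $G/\|G\|_2$ is uniform on $S^{n-1}$ and independent of $\|G\|_2$, and that $\E\|G\|_2 \asymp \sqrt n$) yields
\[
\vr(X) \lesssim n^{-1/2}\,\E\|G\|_{X^*}.
\]

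The central step is to control the Gaussian average $\E\|G\|_{X^*}$ in terms of the cotype of $X$. Since cotype is not directly a property of $X^*$, I would invoke Pisier's duality theorem for K-convex spaces: whenever $X$ has finite cotype, $X$ is K-convex and $T_2(X^*) \lesssim K(X)\,C_2(X)$, where $T_2$ denotes the Gaussian type-$2$ constant and $K(X)$ the K-convexity constant of $X$. In the John position we have $B_{X^*} \subset B_2^n$, hence $\|e_i\|_{X^*} \le 1$ for every $i$. Applying the Gaussian type-$2$ inequality in $X^*$ to the vectors $e_1,\dots,e_n$ gives
\[
\E\|G\|_{X^*} = \E\,\Big\|\sum_{i=1}^n g_i e_i\Big\|_{X^*} \le T_2(X^*)\,\Big(\sum_{i=1}^n \|e_i\|_{X^*}^2\Big)^{1/2} \le T_2(X^*)\,\sqrt n.
\]
Substituting into the previous display, $\vr(X) \lesssim K(X)\,C_2(X)$. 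Finally, Pisier's celebrated theorem on the K-convexity constant states that for any Banach space of finite cotype $2$, $K(X) \lesssim 1 + \log C_2(X)$, which combines with the preceding estimate to give
\[
\vr(X) \;\lesssim\; C_2(X)\,\log\bigl(2 C_2(X)\bigr),
\]
as claimed.

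The hard parts of this plan are the deep inputs invoked: Pisier's duality theorem (that type and cotype are dual up to the K-convexity constant) and, above all, Pisier's logarithmic bound $K(X) \lesssim 1+\log C_2(X)$, whose proof rests on sophisticated hypercontractivity estimates for the Rademacher projection. Once these are taken for granted, the remaining chain --- Urysohn, Gaussian polar coordinates, and a single application of type $2$ to the standard basis in the John position --- is entirely routine.
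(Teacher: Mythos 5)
The paper does not prove this statement at all---it is quoted verbatim from Bourgain--Milman \cite{BM87}---so your proposal can only be judged on its own merits, and unfortunately it contains two genuine errors. First, the John-position step is backwards: from $B_2^n\subseteq B_X$ one gets $\|x\|_X\le\|x\|_2$, hence by duality $\|u\|_{X^*}\ge\|u\|_2$; the inclusion $B_{X^*}\subseteq B_2^n$ therefore gives $\|e_i\|_{X^*}\ge 1$, not $\le 1$. In fact $\|e_i\|_{X^*}$ can be as large as $\sqrt n$ in John position (take $X=\ell_1^n$, so $B_X=\sqrt n\,B_1^n$ and $\|e_i\|_{X^*}=\sqrt n$), so the type-$2$ application does not yield $\E\|G\|_{X^*}\le T_2(X^*)\sqrt n$.

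Second, and decisively, there is no theorem asserting $K(X)\lesssim 1+\log C_2(X)$. Pisier's theorem bounds $K(X)$ by $c\,(1+\log d(X,\ell_2^n))$ for $n$-dimensional $X$ (equivalently, K-convexity is characterized by nontrivial \emph{type}); the K-convexity constant cannot be controlled by cotype alone, since $K(\ell_1^n)\asymp\sqrt{\log n}$ while $C_2(\ell_1^n)\lesssim 1$. The same example shows your overall strategy cannot be repaired within the John position: for $B_X=\sqrt n\,B_1^n$ one has $n^{-1/2}\,\E\|G\|_{X^*}\asymp\sqrt{\log n}$, which already exceeds $C_2(X)\log(2C_2(X))=O(1)$, so after the Urysohn step no correct sequence of estimates can close the chain. (Your Urysohn/Gaussian conversion and the duality bound $T_2(X^*)\lesssim K(X)C_2(X)$ are fine; the difficulty of Bourgain--Milman's theorem is precisely to eliminate $K(X)$, e.g.\ by working with a better-chosen inscribed ellipsoid/position and a separate argument producing the $\log(2C_2(X))$ factor---citing ``$K\lesssim\log C_2$'' assumes essentially what must be proved, and is false.)
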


The relation between volume ratio and cotype property is far from being well understood. For example, it was asked in~\cite{STJ80} whether bounded volume ratio implies cotype $q$ for every $q>2$. 

In Section~\ref{sec applications} below we discuss the cotype property of the space $\ell_p^n \otimes_{\pi} \ell_q^n \otimes_{\pi} \ell_r^n$, as well as the case of $k$-fold tensor products.


\subsection{Volume of unit balls in Banach spaces}

The following result is a special case of Lemma 1.5 in~\cite{Sch82}. It provides a lower estimate for the volume of the unit ball of a normed space by the volume of a $B_\infty^n$ ball of a certain radius, arising from an average over sign vectors.

\begin{lem}\label{LEM_volume_unit_balls}
        Let $(X,\|\cdot\|_X)$ be an $n$-dimensional Banach space, and let $e_1,\dots,e_n$ be basis vectors such that $\|e_i\|_X=1$, and $(\varepsilon_i)_{i=1}^\infty$ a sequence of independent symmetric Bernoulli random variables. Then
        $$
        2^n \Bigg( \mathbb E\,\Big\|\sum_{i=1}^n \varepsilon_ie_i\Big\|_X\Bigg)^{-n} \leq \vol_n(B_X).
        $$
\end{lem}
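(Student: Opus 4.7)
The plan is to combine Jensen's inequality with an explicit geometric decomposition of $B_X$ into subsets supported in the $2^n$ coordinate orthants.

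First I would reduce the estimate to a purely geometric inequality via Jensen.  The function $t \mapsto t^{-n}$ is convex on $(0,\infty)$, so applying Jensen to the uniform distribution on $\{-1,1\}^n$ gives
\[
\Bigl(\E\,\Bigl\|\sum_{i=1}^n \varepsilon_i e_i\Bigr\|_X\Bigr)^{-n} \;\le\; \E\,\Bigl\|\sum_{i=1}^n \varepsilon_i e_i\Bigr\|_X^{-n} \;=\; \frac{1}{2^n}\sum_{\varepsilon \in \{-1,1\}^n}\Bigl\|\sum_{i=1}^n \varepsilon_i e_i\Bigr\|_X^{-n},
\]
so the lemma reduces to proving
\[
\sum_{\varepsilon \in \{-1,1\}^n} \Bigl\|\sum_{i=1}^n \varepsilon_i e_i\Bigr\|_X^{-n} \;\le\; \vol_n(B_X).
\]

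Next I would construct the relevant subsets.  Write $v_\varepsilon := \sum_i \varepsilon_i e_i$; then $v_\varepsilon/\|v_\varepsilon\|_X \in \partial B_X$, and the hypothesis $\|e_i\|_X = 1$ places $\pm e_i$ on $\partial B_X$.  The natural candidate for a subset of $B_X$ of volume $\|v_\varepsilon\|_X^{-n}$, supported in the orthant $O_\varepsilon := \{x \in \R^n : \mathrm{sign}(x_i) = \varepsilon_i\}$, is the box
\[
R_\varepsilon \;:=\; \Bigl\{\sum_{i=1}^n t_i \varepsilon_i e_i/\|v_\varepsilon\|_X : t_i \in [0,1]\Bigr\},
\]
whose $n$-dimensional volume is exactly $\|v_\varepsilon\|_X^{-n}$.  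Since distinct $R_\varepsilon$ live in disjoint orthants, showing $R_\varepsilon \subseteq B_X$ for each $\varepsilon$ would close the proof.

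The containment $R_\varepsilon \subseteq B_X$ is, by convexity, equivalent to the partial-sum monotonicity
\[
\Bigl\|\sum_{i \in A}\varepsilon_i e_i\Bigr\|_X \;\le\; \Bigl\|\sum_{i=1}^n \varepsilon_i e_i\Bigr\|_X \qquad (A \subseteq [n]),
\]
and this is the main obstacle: the inequality can fail when the basis $(e_i)$ is not unconditional.  I expect the fix to proceed by replacing each box by a suitable family of simplices with vertex set in $\{0,\, v_\varepsilon/\|v_\varepsilon\|_X,\, \varepsilon_i e_i : i \in [n]\}$---every such simplex automatically sits inside $B_X$, has volume $1/(n!\,\|v_\varepsilon\|_X)$, and a careful bookkeeping over all sign patterns $\varepsilon$ and all choices of an omitted coordinate should produce a triangulation of $B_X$ whose summed volume recovers the target bound through the Jensen step.
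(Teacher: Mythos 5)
Your Jensen step goes in a direction that loses too much, and it reduces the lemma to a statement that is simply false. The inequality $(\E\,\|\sum_i\varepsilon_ie_i\|_X)^{-n}\le \E\,\|\sum_i\varepsilon_ie_i\|_X^{-n}$ is of course valid, but the resulting target
\[
\sum_{\varepsilon\in\{-1,1\}^n}\Big\|\sum_{i=1}^n\varepsilon_ie_i\Big\|_X^{-n}\;\le\;\vol_n(B_X)
\]
is strictly stronger than the lemma and fails in general. Take $n=2$ and let $B_X=\mathrm{conv}\{\pm e_1,\pm e_2,\pm\delta^{-1}(e_1+e_2)\}$ with $0<\delta<1$ small; then $\|e_1\|_X=\|e_2\|_X=1$, $\|e_1+e_2\|_X=\delta$, $\|e_1-e_2\|_X=2$, so the left-hand side above equals $2\delta^{-2}+\tfrac12$, while $\vol_2(B_X)=1+2\delta^{-1}$, which is much smaller (for $\delta=1/10$: $200.5$ versus $21$). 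The original lemma is comfortably true here, since $\E\,\|\varepsilon_1e_1+\varepsilon_2e_2\|_X=1+\delta/2$ gives a bound of about $4$. So the obstacle you flagged (the partial-sum monotonicity needed for $R_\varepsilon\subseteq B_X$) is not a technical nuisance to be repaired by a cleverer decomposition: no triangulation can prove your reduced inequality, because it is false precisely in the non-unconditional situations where some $\|v_\varepsilon\|_X$ is much smaller than the average. (Your proposed simplices also have volume $\tfrac{1}{n!}\|v_\varepsilon\|_X^{-1}$, which has the wrong homogeneity to recover the $\|v_\varepsilon\|_X^{-n}$ terms the Jensen step requires.)

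For comparison, the paper does not prove this lemma but quotes it as a special case of Lemma 1.5 of Sch\"utt's paper \cite{Sch82}. The standard argument keeps the arithmetic mean $\E\,\|\sum_i\varepsilon_ie_i\|_X$ intact instead of passing to $\E\,\|\cdot\|^{-n}$: starting from $n!\,\vol_n(B_X)=\int_{\R^n}e^{-\|x\|_X}\dif x$, one splits the integral over the $2^n$ orthants, applies Jensen to the sign average \emph{inside} the exponential,
\[
\frac1{2^n}\sum_{\varepsilon}e^{-\|\sum_it_i\varepsilon_ie_i\|_X}\;\ge\;e^{-\E_\varepsilon\|\sum_it_i\varepsilon_ie_i\|_X},\qquad t\in[0,\infty)^n,
\]
and then uses the Rademacher contraction principle $\E_\varepsilon\|\sum_it_i\varepsilon_ie_i\|_X\le\|t\|_\infty\,\E_\varepsilon\|\sum_i\varepsilon_ie_i\|_X$ together with $\int_{[0,\infty)^n}e^{-M\|t\|_\infty}\dif t=n!\,M^{-n}$ to conclude $\vol_n(B_X)\ge 2^n\big(\E\,\|\sum_i\varepsilon_ie_i\|_X\big)^{-n}$. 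If you want a self-contained proof, this is the route to take; as written, your argument has a fatal gap at the reduction step.
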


We will use this result in combination with the $3$-fold version of Chevet's inequality to obtain a lower bound on the volume of the unit ball in $\ell^n_{p^*}\otimes_{\epsilon}\ell_{q^*}^n\otimes_{\epsilon}\ell_{r^*}^n$ which gives, using the Blaschke-Santal\'o inequality, an upper bound on the volume of the unit ball in the dual space $\ell_p\otimes_{\pi}\ell_q\otimes_{\pi}\ell_r$, as well as tensor products of more than three $\ell_p$ spaces.

\subsection{Rademacher versus Gaussian averages}

In order to use Chevet's inequality in combination with Lemma~\ref{LEM_volume_unit_balls}, we need to pass from a Rademacher average to a Gaussian one. The following result due to Pisier shows that Rademacher averages are dominated by Gaussian averages in arbitrary Banach spaces. Note however that in general these averages are not equivalent.

\begin{lem}[\cite{Pis86}]\label{lem:gauss dominates rademacher}
        Let $(X, \|\cdot\|_X)$ be a Banach space, $1\leq p <\infty$ and let $\xi_1,\dots,\xi_n$ be independent, symmetric random variables. Assume that $\E|\xi_i|=\E|\xi_j|$ for all $1\leq i,j \leq n$. Then, for all $x_1,\dots,x_n\in X$, we have
        \[
        \E \,\bigg\| \sum_{i=1}^n \varepsilon_i x_i\bigg\|^p \leq \big(\E|\xi_1|\big)^{-p}\, \E \,\bigg\| \sum_{i=1}^n \xi_i x_i\bigg\|^p.
        \]
\end{lem}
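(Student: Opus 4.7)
The plan is to exploit the symmetry of each $\xi_i$ in order to extract an independent Rademacher factor, and then apply Jensen's inequality in the remaining (non-negative) variables. Since $\xi_i$ is symmetric, it has the same distribution as $\varepsilon_i|\xi_i|$, where we may choose $(\varepsilon_i)_{i=1}^n$ to be a standard Rademacher sequence independent of $(|\xi_i|)_{i=1}^n$; by the mutual independence of the original $\xi_i$'s, we may further take the $|\xi_i|$'s to be mutually independent. Consequently,
\[
\E\,\bigg\|\sum_{i=1}^n \xi_i x_i\bigg\|^p = \E_{\varepsilon}\,\E_{|\xi|}\,\bigg\|\sum_{i=1}^n \varepsilon_i\,|\xi_i|\, x_i\bigg\|^p.
\]

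Next I would condition on the Rademacher signs and apply Jensen's inequality in the $(|\xi_i|)$ variables. For any fixed realisation of $(\varepsilon_i)$, the map
\[
(t_1,\dots,t_n) \longmapsto \bigg\|\sum_{i=1}^n \varepsilon_i t_i x_i\bigg\|^p
\]
is convex on $\R^n$, being the composition of a linear map with the convex function $y\mapsto \|y\|_X^p$. Jensen's inequality therefore gives
\[
\E_{|\xi|}\,\bigg\|\sum_{i=1}^n \varepsilon_i |\xi_i| x_i\bigg\|^p \;\geq\; \bigg\|\sum_{i=1}^n \varepsilon_i\,\E|\xi_i|\, x_i\bigg\|^p \;=\; \big(\E|\xi_1|\big)^p\, \bigg\|\sum_{i=1}^n \varepsilon_i x_i\bigg\|^p,
\]
where in the last equality we used the hypothesis $\E|\xi_i|=\E|\xi_1|$ for all $i$, which lets the common scalar $\E|\xi_1|$ be pulled out of the norm homogeneously. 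Taking expectation over the $\varepsilon_i$'s and rearranging the factor $(\E|\xi_1|)^p$ yields the desired inequality.

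The argument is essentially a one-line Jensen estimate, so there is no genuinely difficult step. The only point that requires attention is the initial reduction: one has to write $\xi_i \stackrel{d}{=} \varepsilon_i|\xi_i|$ with an \emph{independent} Rademacher sequence so that, after conditioning on $\varepsilon$, the inner expectation ranges only over the non-negative variables $|\xi_i|$ and Jensen can be applied coordinate-by-coordinate against the convex norm. Everything else is bookkeeping.
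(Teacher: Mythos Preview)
Your argument is correct and is precisely the standard proof: write $\xi_i \stackrel{d}{=} \varepsilon_i|\xi_i|$ with an independent Rademacher sequence, then apply Jensen's inequality in the $|\xi_i|$ variables using convexity of $t\mapsto\|{\cdot}\|^p$. Note that the paper does not actually supply a proof of this lemma; it is quoted as a result of Pisier~\cite{Pis86}, so there is no in-paper argument to compare against, but what you wrote is exactly the classical justification.
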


In particular, if we choose $g_1, \dots,g_n$ to be independent Gaussian random variables, then since  $\big(\E|g_1|\big)^{-p}=(\pi/2)^{p/2}$, we have
\begin{align*}
\E \,\bigg\| \sum_{i=1}^n \varepsilon_i x_i\bigg\|^p \lesssim_p \E \,\bigg\| \sum_{i=1}^n g_i x_i\bigg\|^p.
\end{align*}


\subsection{A multilinear Hardy-Littlewood type inequality}

An essential tool in proving upper bounds on the norm of the natural identity between $\ell_2^{n^3}$ and $\ell^n_p\otimes_{\pi}\ell_q^n\otimes_{\pi}\ell_r^n$ is the following inequality, which is a generalization of a classical inequality by Hardy and Littlewood \cite{HL34}. For now, we state it only for the case of $3$-fold tensors. In Section~\ref{sec k fold} we also present the consequences of the general version.

\begin{thm}[\cite{PP81}, Thm.~B]\label{thm multilinear}
        Let $n\in\N$ and $1\leq p,q,r \leq \infty$ so that $\frac{1}{p}+\frac{1}{q}+\frac{1}{r} \le \frac 1 2$. Then, for all $A\in \ell^n_{p^*}\otimes_{\epsilon}\ell_{q^*}^n\otimes_{\epsilon}\ell_{r^*}^n$, we have
        \begin{align*}
        \|A\|_{\ell_{\mu}^{n^3}} \lesssim \|A\|_{\ell^n_{p^*}\otimes_{\epsilon}\ell_{q^*}^n\otimes_{\epsilon}\ell_{r^*}^n},
        \end{align*}
        where $\mu$ is given by
        \begin{align}\label{def mu}
        \mu := \frac{3}{2-\frac{1}{p}-\frac{1}{q}-\frac{1}{r}}\,.
        \end{align}
\end{thm}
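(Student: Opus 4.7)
Theorem~\ref{thm multilinear} is the trilinear Hardy--Littlewood--Bohnenblust--Hille estimate due to Praciano--Pereira, and I would organize its proof around the two extreme cases of the admissible parameter region $0 \le \tfrac{1}{p}+\tfrac{1}{q}+\tfrac{1}{r} \le \tfrac{1}{2}$, and then interpolate. At $\tfrac{1}{p}+\tfrac{1}{q}+\tfrac{1}{r} = 0$ (i.e.\ $p=q=r=\infty$), formula~\eqref{def mu} gives $\mu = 3/2$, so the assertion reduces to the trilinear Bohnenblust--Hille inequality $\|A\|_{\ell_{3/2}^{n^3}} \lesssim \|A\|_{\ell_1^n \otimes_\epsilon \ell_1^n \otimes_\epsilon \ell_1^n}$. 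At the other extreme $\tfrac{1}{p}+\tfrac{1}{q}+\tfrac{1}{r} = \tfrac{1}{2}$ one has $\mu = 2$, and the assertion becomes the trilinear Hardy--Littlewood bound $\|A\|_{\ell_2^{n^3}} \lesssim \|A\|_\epsilon$, which is the classical endpoint corresponding to Hilbert space slicing.

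For the Bohnenblust--Hille endpoint I would use iterated Khintchine's inequality combined with Hölder. Viewing the coefficients as $A_{ijk} = A(e_i,e_j,e_k)$, one applies Khintchine in the first coordinate to bound $\big(\sum_i |A_{ijk}|^2\big)^{1/2}$ by the Rademacher average $\mathbb{E}_\varepsilon |\sum_i \varepsilon_i A_{ijk}|$; since sign vectors live in $B_\infty^n$, this average is controlled by $\|A\|_\epsilon$ after taking sup in $j,k$. Iterating this move through the second and third coordinates and balancing the resulting mixed $\ell_2$--$\ell_\infty$ norms by Hölder with exponents summing to $2/3$ produces the $\ell_{3/2}^{n^3}$ target norm. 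The $\mu = 2$ endpoint is lighter: after fixing norm-one slices of $A$ in two coordinates and using the bilinear Hardy--Littlewood inequality in the remaining two, the constraint $\tfrac{1}{p}+\tfrac{1}{q}+\tfrac{1}{r} = \tfrac{1}{2}$ is exactly what makes the Hölder balancing close up to give an $\ell_2^{n^3}$ bound.

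For intermediate $(1/p,1/q,1/r)$ I would invoke complex interpolation, in the form of the multilinear Riesz--Thorin theorem (or Stein's interpolation for analytic families), applied to the identity map $\ell_{p^*}^n\otimes_\epsilon \ell_{q^*}^n \otimes_\epsilon \ell_{r^*}^n \to \ell_\mu^{n^3}$ between the two endpoint bounds just established. The exponent relation $\mu = 3/(2-\tfrac{1}{p}-\tfrac{1}{q}-\tfrac{1}{r})$ is exactly what the Riesz--Thorin exponent arithmetic yields.

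The main technical obstacle is the Bohnenblust--Hille endpoint: the order in which the iterated Khintchine is carried out matters, constants must be kept independent of $n$, and the passage from a mixed $\ell_2(\ell_2(\ell_2))$-norm to the single $\ell_{3/2}^{n^3}$-norm requires a genuinely nontrivial Minkowski/Hölder interpolation (a step where early attempts historically produced constants blowing up with $n$). Once both endpoints are in hand, however, the interpolation step is routine and the resulting implicit constant depends only on $p,q,r$ as claimed.
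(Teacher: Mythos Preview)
The paper does not prove Theorem~\ref{thm multilinear}; it is quoted from Praciano--Pereira~\cite{PP81} as a black box, so there is no proof in the paper against which to compare your attempt.

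That said, your plan has a genuine gap at the interpolation step. You propose to apply Riesz--Thorin (or its multilinear version) to the identity map $\ell_{p^*}^n\otimes_\epsilon \ell_{q^*}^n \otimes_\epsilon \ell_{r^*}^n \to \ell_\mu^{n^3}$, but this is a \emph{linear} map whose domain is an injective tensor product, and injective tensor products do not form a complex interpolation scale: in general $[\,X_0\otimes_\epsilon Y_0,\ X_1\otimes_\epsilon Y_1\,]_\theta \neq X_\theta \otimes_\epsilon Y_\theta$ (dually, projective tensor products of $\ell_p$ spaces do not interpolate either). Thus the passage from your two endpoints to the full parameter region cannot be carried out by a direct Riesz--Thorin argument on this map, and invoking ``multilinear Riesz--Thorin'' does not help, since there is no fixed multilinear operator in sight whose domain spaces vary. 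Your sketch for the $\mu=2$ endpoint is also loose: fixing slices in two coordinates leaves a linear form, not a bilinear one, and the locus $\tfrac{1}{p}+\tfrac{1}{q}+\tfrac{1}{r}=\tfrac{1}{2}$ is a two-parameter family, each point of which needs its own argument before any interpolation could even be set up. Praciano--Pereira's actual proof in~\cite{PP81} proceeds instead by induction on the number of factors, using the bilinear Hardy--Littlewood inequality as the base case together with mixed-norm H\"older and Minkowski inequalities; no interpolation between a Bohnenblust--Hille endpoint and a Hardy--Littlewood endpoint is used.
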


\medskip
\section{The volume of the unit ball in $\ell^n_p\otimes_{\pi}\ell_q^n\otimes_{\pi}\ell_r^n$}\label{sec vol}

In this section we evaluate the volume of the unit ball of $\ell^n_p\otimes_{\pi}\ell_q^n\otimes_{\pi}\ell_r^n$ up to constants depending only on the parameters $p,q,r$. The main ingredients in the proof are the $3$-fold version of Chevet's inequality (Lemma \ref{cor chevet}) and the Blaschke-Santal\'o inequality (Lemma \ref{lem:blaschke santalo}).

The next theorem will be the consequence of the following two subsections. 

\begin{thmalpha}\label{thm:volume unit balls projective}
        Let $n\in\N$ and $1\leq p \le q \le r \leq \infty$. Then
        \begin{align*}
        n^{-\min\left(\frac{1}{p},\frac1 2\right) -\min\left(\frac{1}{q},\frac1 2\right) -\frac{1}{r}-1} \leq \vol_{n^3}\big(B_{\ell^n_p\otimes_{\pi}\ell_q^n\otimes_{\pi}\ell_r^n}\big)^{1/n^3} \lesssim_{p,q,r}\, n^{-\min\left(\frac{1}{p},\frac1 2\right) -\min\left(\frac{1}{q},\frac1 2\right) -\frac{1}{r}-1}.
        \end{align*}
\end{thmalpha}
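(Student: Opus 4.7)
The plan is to prove Theorem~\ref{thm:volume unit balls projective} in two complementary parts: an upper bound and a matching (up to constants depending on $p,q,r$) lower bound on $\vol_{n^3}(B_X)^{1/n^3}$ for $X=\ell_p^n\otimes_\pi \ell_q^n\otimes_\pi \ell_r^n$. Both bounds rest on Sch\"utt's Lemma~\ref{LEM_volume_unit_balls}, but applied on opposite sides of the $\pi$/$\epsilon$ duality.

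For the upper bound, the first step is to invoke the Blaschke-Santal\'o inequality (Lemma~\ref{lem:blaschke santalo}) together with the identification $B_X^\circ = B_{\ell_{p^*}^n\otimes_\epsilon \ell_{q^*}^n\otimes_\epsilon \ell_{r^*}^n}$ and the Stirling estimate $\vol_{n^3}(B_2^{n^3})^{1/n^3}\asymp n^{-3/2}$. This reduces the task to a \emph{lower} bound on the volume of the injective unit ball. Sch\"utt's Lemma, applied to the tensor basis $(e_i\otimes e_j\otimes e_k)$, each of $\epsilon$-norm $1$, converts the problem into an \emph{upper} bound on the Rademacher average
\[
 \E\,\Big\|\sum_{i,j,k=1}^n \varepsilon_{ijk}\,e_i\otimes e_j\otimes e_k\Big\|_{\ell_{p^*}^n\otimes_\epsilon \ell_{q^*}^n\otimes_\epsilon \ell_{r^*}^n}.
\]
Pisier's comparison (Lemma~\ref{lem:gauss dominates rademacher}) passes from Rademachers to Gaussians up to a universal constant, after which the 3-fold Chevet inequality (Lemma~\ref{cor chevet}) applies directly. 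The ingredients to plug in are the weak-$\ell_2$ norms $\|(e_i)_{i=1}^n\|_{w,2}^{\ell_{s^*}^n} = n^{1/2-\min(1/s,1/2)}$ and the Gaussian averages $\E\|\sum_i g_i e_i\|_{\ell_{s^*}^n}\asymp_s n^{1-1/s}$ for $s>1$ (with an extra $\sqrt{\log n}$ at $s=1$ that is absorbed into the constants). Since $p\leq q\leq r$, a short case split shows that the dominant term of Chevet pairs the weak-$\ell_2$ norms in the $p$- and $q$-slots with the Gaussian average in the $r$-slot, giving
\[
 \E\big\|\,\cdot\,\big\|_\epsilon \;\lesssim_{p,q,r}\; n^{2-\min(1/p,1/2)-\min(1/q,1/2)-1/r},
\]
and combining with the Blaschke-Santal\'o factor $n^{-3}$ delivers the announced upper bound.

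For the lower bound I would apply Sch\"utt's Lemma directly to $X$, reducing matters to an upper bound on the analogous Rademacher average in the \emph{projective} norm. The main tool is the contractive inclusion $\|A\|_{\ell_s^n\otimes_\pi Y}\leq \|A\|_{\ell_1^n\otimes_\pi Y}$ (because $\|\cdot\|_s\leq\|\cdot\|_1$ for $s\geq 1$) together with the canonical isometry $\ell_1^n\otimes_\pi Y = \ell_1^n(Y)$, which permits iterative peeling of $\ell_1^n$-factors. When $p,q\leq 2$, peeling the first two indices produces
\[
 \|A\|_{\pi}\leq \sum_{i,j=1}^n \big\|(a_{ijk})_{k=1}^n\big\|_{\ell_r^n},
\]
whose Rademacher expectation is exactly of the required order $n^{2+1/r}$ (with $n^{1/r}=1$ interpreted at $r=\infty$). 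In the remaining regimes, where $p\geq 2$ or $q\geq 2$, replacing those factors by $\ell_1^n$ is too wasteful; the plan is then to peel only the indices $s$ with $\min(1/s,1/2)=1/2$ and invoke Sch\"utt's known two-fold projective volume estimate~\cite{Sch82} (read back through the enough-symmetries formula~\eqref{formula vol ell}) to handle the residual two-fold tensor factor.

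The main obstacle is precisely this lower bound: Chevet-type inequalities are tailored to injective tensor norms, so there is no ready-made analogue for the projective side, and the naive triangle bound $\|A\|_\pi\leq\sum|a_{ijk}|$ is too weak outside the purely $\ell_1$ regime. Circumventing this via the iterated peeling/contraction scheme is the technical heart of the argument, and the need to peel in different orders depending on which of $p,q$ crosses the threshold $2$ is exactly what forces the piecewise $\min(1/p,1/2),\,\min(1/q,1/2)$ structure of the exponent in the theorem.
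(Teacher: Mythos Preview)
Your upper bound is exactly the paper's argument: Blaschke--Santal\'o reduces to a lower bound on $\vol(B_\epsilon^n)$, Sch\"utt's lemma converts this to an upper bound on the Rademacher average in the injective norm, and the $3$-fold Chevet inequality (via Pisier's comparison) supplies that bound.

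Your lower-bound strategy, however, has a genuine gap once $q\ge 2$. Applying Sch\"utt's lemma to $X_\pi^n$ requires an upper bound on $\E\big\|\sum \varepsilon_{ijk}\,e_i\otimes e_j\otimes e_k\big\|_\pi$, and your peeling delivers the sharp $n^{2+1/r}$ only when both $p,q\le 2$. In the regime $p\le 2\le q$ you propose to peel only the $p$-slot and then ``read back'' the needed bound on $\E\|B\|_{\ell_q^n\otimes_\pi\ell_r^n}$ from Sch\"utt's two-fold volume formula. But Sch\"utt's lemma runs in the wrong direction here: the volume of $B_{\ell_q^n\otimes_\pi\ell_r^n}$ yields only a \emph{lower} bound on the two-fold projective Rademacher average, not the upper bound you need. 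In fact the required upper bound is false: for $q=r=\infty$ one has $\E\|B\|_{\ell_\infty^n\otimes_\pi\ell_\infty^n}\asymp n$ (the upper bound via $\|B\|_\pi\le\|\id\|_{\ell_2^{n^2}\to\ell_\infty^n\otimes_\pi\ell_\infty^n}\cdot\|B\|_2\lesssim n$, the lower bound via Sch\"utt's lemma and $\vol^{1/n^2}\asymp n^{-1}$), whereas your scheme would need it to be $\lesssim n^{1/2}$ to reach the target exponent after peeling. For $p\ge 2$ your outline is even less complete, since no index satisfies $\min(1/s,1/2)=1/2$ and no two-fold residual appears.

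The paper bypasses projective Rademacher averages entirely. It proves directly the pointwise inequality
\[
 \|A\|_{\epsilon}\;\ge\; \tfrac12\, n^{-\min(1/p,1/2)-\min(1/q,1/2)-1/r-1}\sum_{i,j,k}|A_{i,j,k}|
\]
by testing the trilinear form on specific vectors (sign vectors $n^{-1/s}\varepsilon$ in each slot with $s\ge 2$, standard unit vectors $e_i$ in each slot with $s\le 2$) and applying Khintchine and H\"older. Polarity then gives $B_\pi^n\supseteq \tfrac12\,n^{-\min(1/p,1/2)-\min(1/q,1/2)-1/r-1}B_\infty^{n^3}$, and taking volumes yields the lower bound at once.
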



\subsection{Lower bounds on the volume}

Let us first we fix some more notation. For $1\leq p,q,r \leq \infty$ and $n\in\N$, let \[X^n_\pi := \ell^n_p\otimes_{\pi}\ell_q^n\otimes_{\pi}\ell_r^n,\] 
where we suppress the parameters $p,q,r$ that are clear from the context. For the corresponding norm, we write in short $\|\cdot\|_\pi$ instead of $\| \cdot\|_{X_\pi}$. Similarly, we define  $X_\epsilon^n: = \ell^n_{p^*}\otimes_{\epsilon}\ell_{q^*}^n\otimes_{\epsilon}\ell_{r^*}^n$ and write $\|\cdot\|_\epsilon$ instead of $\|\cdot\|_{X_\epsilon}$. We denote the corresponding unit balls by $B^n_{\pi}$ and $B_{\epsilon}^n$ respectively. Any $A\in X_\epsilon^n$, we express in the form
\begin{align*}
A = \sum_{i,j,k=1}^n A_{i,j,k}\,e_{i}\otimes e_{j}\otimes e_{k}.
\end{align*}

The main tool in proving a lower bound is the following estimate that compares the injective norm $\|\cdot\|_\epsilon$ with the $\ell_1$-norm in $\R^{n^3}$.

\begin{prop}\label{lem l1 norm}
Let $n\in\N$, $1\leq p ,  q , r \leq \infty$, and assume $A\in X^n_\epsilon$. Then
\begin{align*}
\|A\|_{\epsilon} \geq \frac{1}{2} \,n^{-\min\left(\frac{1}{p},\frac1 2\right) -\min\left(\frac{1}{q},\frac1 2\right) -\frac{1}{r}-1}\sum_{i,j,k=1}^n\left |A_{i,j,k}\right |.
\end{align*}
\end{prop}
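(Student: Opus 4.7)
The plan is to lower bound $\|A\|_\epsilon$ by producing, for each $A$, carefully chosen test vectors $(x,y,z)\in B_{\ell_p^n}\times B_{\ell_q^n}\times B_{\ell_r^n}$ with $\langle A,x\otimes y\otimes z\rangle$ large relative to $\sum_{ijk}|A_{ijk}|$; then $\|A\|_\epsilon\ge\langle A,x\otimes y\otimes z\rangle$ yields the inequality. In the direction corresponding to $r$ I always take $z=\zeta/n^{1/r}$ with $\zeta\in\{\pm 1\}^n$ (so $\|z\|_r=1$). For each of $x,y$ I take either the standard basis vector $e_{i^*}$ (when the corresponding exponent, $p$ or $q$, is at most $2$) or the normalized sign vector $\epsilon/n^{1/p}$ (when the exponent exceeds $2$). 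Since the claim is symmetric in $p$ and $q$, I may assume $p\le q$ and consider three sub-cases: (i)~$p,q\le 2$, (ii)~$p\le 2<q$, (iii)~$p>2$.

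The computational core is the following iterated Khintchine-type bound, which I prove by induction on $k$: for any $k$-tensor $B$ on $\{1,\dots,n\}^k$ and with $c_K=1/\sqrt 2$ the sharp Khintchine constant,
\[
\max_{\varepsilon^{(1)},\dots,\varepsilon^{(k)}\in\{\pm 1\}^n}\Big|\sum_{i_1,\dots,i_k}B_{i_1,\dots,i_k}\,\varepsilon^{(1)}_{i_1}\cdots\varepsilon^{(k)}_{i_k}\Big|\ \ge\ c_K^{\,k-1}\,n^{-(k-1)/2}\sum_{i_1,\dots,i_k}|B_{i_1,\dots,i_k}|.
\]
The base case $k=1$ is immediate since one can choose signs matching $\mathrm{sign}(B_{i_1})$. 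For the inductive step, fix $\varepsilon^{(k)}$ and apply the inductive hypothesis to the $(k-1)$-tensor obtained by contracting $B$ against $\varepsilon^{(k)}$ in the last coordinate. Maximizing over $\varepsilon^{(k)}$ is then controlled by the expectation, which by the slicewise Khintchine inequality $\mathbb E_{\varepsilon}|\sum_{i}a_i\varepsilon_i|\ge c_K(\sum_i a_i^2)^{1/2}$ and Cauchy--Schwarz $(\sum_i a_i^2)^{1/2}\ge n^{-1/2}\sum_i|a_i|$ applied to each $(k-1)$-slice, exceeds $c_K\,n^{-1/2}\sum|B_{\vec i}|$.

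With this lemma the three sub-cases follow by direct computation. In (i), taking $x=e_{i^*}$, $y=e_{j^*}$, the lemma with $k=1$ on the vector $(A_{i^*j^*k})_k$ together with maximization over $(i^*,j^*)$ yields $\|A\|_\epsilon\ge n^{-1/r-2}\sum|A_{ijk}|$. In (ii), taking $x=e_{i^*}$, $y=\eta/n^{1/q}$, the lemma with $k=2$ applied to the matrix $A_{i^*,\cdot,\cdot}$ and maximization over $i^*$ yields $\|A\|_\epsilon\ge c_K\,n^{-1/2-1/q-1/r-1}\sum|A_{ijk}|$. In (iii), taking $x=\epsilon/n^{1/p}$, $y=\eta/n^{1/q}$, the lemma with $k=3$ applied directly to $A$ yields $\|A\|_\epsilon\ge c_K^{\,2}\,n^{-1/p-1/q-1/r-1}\sum|A_{ijk}|$. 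In each sub-case the exponent simplifies to $-\min(1/p,1/2)-\min(1/q,1/2)-1/r-1$, and since $c_K^{\,2}=\tfrac 12$, the multiplicative constant is always at least $\tfrac 12$.

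The main obstacle is the iterated Khintchine lemma, particularly tracking the sharp constant $1/\sqrt 2$ through each induction step so that the final constant in sub-case (iii) is exactly $\tfrac 12$. All other steps reduce to direct arithmetic, Cauchy--Schwarz, and the elementary fact that the maximum of non-negative quantities is at least their average (used to bound the ``max over $(i^*,j^*)$'' factor by $n^{-2}$ times the sum).
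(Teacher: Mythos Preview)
Your proposal is correct and follows essentially the same approach as the paper: both split into the same three cases according to the position of $p,q$ relative to $2$, choose the same test vectors (standard unit vectors in coordinates with exponent $\le 2$, normalized sign vectors in coordinates with exponent $>2$, and always a sign vector in the $r$-direction), and then reduce via Khintchine's inequality followed by Cauchy--Schwarz/H\"older. The only difference is presentational: you abstract the repeated Khintchine--Cauchy--Schwarz step into a clean induction lemma with constant $c_K^{\,k-1}n^{-(k-1)/2}$, while the paper carries out the same chain of inequalities explicitly within each case.
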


\begin{proof}
To prove this, we identify $X^n_\epsilon$ with a space of operators. We have
\begin{align*}
\|A\|_{\epsilon} = \max_{\|x\|_{p} =\|y\|_{q}=\|z\|_r=1}\left|\sum_{i,j,k=1}^n A_{i,j,k}\,x_iy_jz_k\right|.
\end{align*}
In what follows, $\e,\delta,\eta \in \{-1,1\}^{n}$ denote sign vectors.
We divide the proof into three different cases, depending on which side of $2$ the parameters $p,q,r$ lie.

\noindent{\bf Case 1:} Assume that $2\leq p\leq q$. In this case we choose $x=n^{-\frac{1}{p}}(\e_1,\ldots,\e_n)$, $y:=n^{-\frac{1}{q}}(\delta_1,\ldots,\delta_n)$, and $z=n^{-\frac{1}{r}}(\eta_1,\ldots,\eta_n)$. Then we obtain

\begin{eqnarray*}
        \max_{\norm{x}_p =  \norm{y}_q=\norm{z}_r=1} \abs{\sum_{i,j,k=1}^n A_{i,j,k}x_i y_j z_k}
        & \ge  & n^{-\frac 1 p-\frac 1 q -\frac 1 r}\max_{\e,\delta,\eta\in\{-1,1\}^n} \abs{\sum_{i,j,k=1}^n A_{i,j,k}\,\e_i \delta_j  \eta_k } \cr
        & = &  n^{-\frac 1 p-\frac 1 q -\frac 1 r} \max_{\delta,\eta\in\{-1,1\}^n} \sum_{i=1}^n  \abs{\sum_{j,k=1}^n A_{i,j,k} \,\delta_j  \eta_k } \cr
        & \ge &  n^{-\frac 1 p-\frac 1 q -\frac 1 r}\max_{\eta\in\{-1,1\}^n}\sum_{i=1}^n  \frac{1}{2^{n}}\sum_{\delta\in\{-1,1\}^n} \abs{\sum_{j,k=1}^n A_{i,j,k}\,  \delta_j\eta_k }.
\end{eqnarray*}
Applying Khintchine's inequality and then H\"older's inequality, we obtain
\begin{eqnarray*}
        \max_{\eta\in\{-1,1\}^n}\sum_{i=1}^n  \frac{1}{2^{n}}\sum_{\delta\in\{-1,1\}^n} \abs{\sum_{j,k=1}^n A_{i,j,k} \, \delta_j\eta_k }
        & \ge &  \frac{1}{\sqrt 2}\max_{\eta\in\{-1,1\}^n}\sum_{i=1}^n  \bigg(\sum_{j=1}^n \Big|\sum_{k=1}^n A_{i,j,k}\,  \eta_k \Big|^2\bigg)^{1/2}\cr
        & \ge & \frac 1{\sqrt{2n}}\max_{\eta\in\{-1,1\}^n}\sum_{i=1}^n \sum_{j=1}^n \abs{\sum_{k=1}^n A_{i,j,k}\,  \eta_k }.
\end{eqnarray*}
Again, applying Khinchine's inequality and then H\"older's inequality,
\begin{align*}
         \max_{\eta\in\{-1,1\}^n}\sum_{i=1}^n \sum_{j=1}^n \abs{\sum_{k=1}^n A_{i,j,k}\,  \eta_k }
        & \ge   \sum_{i=1}^n \sum_{j=1}^n \frac{1}{2^n}\sum_{\eta\in\{-1,1\}^n}\abs{\sum_{k=1}^n A_{i,j,k}\,  \eta_k }\cr
        & \ge  \frac 1{\sqrt {2}}\sum_{i=1}^n \sum_{j=1}^n \left(\sum_{k=1}^n\abs{ A_{i,j,k} }^2\right)^{1/2}\cr
        & \ge  \frac 1{\sqrt{2n}}\sum_{i,j,k=1}^n |A_{i,j,k}|.
\end{align*}

\noindent{\bf Case 2:} Assume that $p\le 2 \leq q$. In this case, we choose for $x\in B_p^n$ the standard unit vectors $e_1,\dots,e_n$ and $y,z$ as in the previous case. We get, again by using the inequalities of Khintchine and H\"older,
\begin{eqnarray*}
        \max_{\norm{x}_p =  \norm{y}_q=\norm{z}_r=1} \abs{\sum_{i,j,k=1}^n A_{i,j,k}\,x_i y_j z_k}
        & \ge  & n^{-\frac 1 q -\frac 1 r}\max_{1\le i\le n}\max_{\delta,\eta\in\{-1,1\}^n} \abs{\sum_{j,k=1}^n A_{i,j,k}\,\delta_j  \eta_k } \cr
        & =  & n^{-\frac 1 q -\frac 1 r}\max_{1\le i\le n}\max_{\eta\in\{-1,1\}^n} \sum_{j=1}^n\abs{\sum_{k=1}^n A_{i,j,k} \, \eta_k } \cr
        & \ge &  n^{-\frac 1 q -\frac 1 r-1}\max_{\eta\in\{-1,1\}^n} \sum_{i,j=1}^n\abs{\sum_{k=1}^n A_{i,j,k} \,\eta_k } \cr
        & \ge &  n^{-\frac 1 q -\frac 1 r-1}\sum_{i,j=1}^n\frac{1}{2^n}\sum_{\eta\in\{-1,1\}^n}\abs{\sum_{k=1}^n A_{i,j,k} \, \eta_k } \cr
        & \ge &  \frac{1}{\sqrt 2}n^{-\frac 1 q -\frac 1 r-1} \sum_{i,j=1}^n\left(\sum_{k=1}^n \abs{A_{i,j,k}}^2\right)^{1/2} \cr
        & \ge &  \frac{1}{\sqrt 2}n^{-\frac 1 q -\frac 1 r-\frac 3 2} \sum_{i,j,k=1}^n| A_{i,j,k}|.
\end{eqnarray*}

\noindent{\bf Case 3:} Assume that $p\le q \le 2$. Choose for $x\in B_p^n$ and $y\in B_q^n$ the standard unit vectors $e_1,\dots,e_n$ and $z$ as in the previous two cases. We have
\begin{eqnarray*}
        \max_{\norm{x}_p =  \norm{y}_q=\norm{z}_r=1} \abs{\sum_{j,k=1}^n A_{i,j,k}\,x_i y_j z_k}
        & \ge  & n^{-\frac 1 r}\max_{1\le i,j\le n}\max_{\eta\in\{-1,1\}^n} \abs{\sum_{k=1}^n A_{i,j,k}\, \eta_k } \cr
        & = & n^{-\frac 1 r}\max_{1\le i,j\le n}\sum_{k=1}^n\abs{A_{i,j,k}} \cr
        & \ge & n^{-\frac 1 r-2}\sum_{i,j,k=1}^n|A_{i,j,k}|.
\end{eqnarray*}
This completes the proof of the lemma.
\end{proof}

As an immediate consequence of Lemma \ref{lem l1 norm}, we obtain a lower bound on the volume radius of the unit ball $B^n_{\pi}$ in $X^n_\pi$.

\begin{cor}\label{prop lower bound ball}
Let $n\in\N$ and $1\leq p,  q , r \leq \infty$. Then, we have
\begin{align*}
\vol_{n^3}\big(B_{\pi}^n\big)^{1/n^3} \ge n^{-\min\left(\frac{1}{p},\frac1 2\right) -\min\left(\frac{1}{q},\frac1 2\right) -\frac{1}{r}-1}.
\end{align*}
\end{cor}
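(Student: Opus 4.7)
My plan is to read the bound from Proposition \ref{lem l1 norm} as a geometric inclusion between unit balls, then pass to the polar body (which by tensor duality is exactly $B_\pi^n$) and finally compute an elementary volume.

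Concretely, set $\alpha := \min(1/p,1/2)+\min(1/q,1/2)+1/r+1$ and let $C_{p,q,r}(n):=2\,n^{\alpha}$. Proposition \ref{lem l1 norm} states, for every $A=\sum_{i,j,k}A_{i,j,k}\,e_i\otimes e_j\otimes e_k\in X_\epsilon^n$, the estimate
\[
\|A\|_\epsilon \;\ge\; C_{p,q,r}(n)^{-1}\sum_{i,j,k=1}^n |A_{i,j,k}|.
\]
Since the right-hand side is just $C_{p,q,r}(n)^{-1}\|A\|_{\ell_1^{n^3}}$, this is precisely the geometric inclusion
\[
B_{\epsilon}^{n} \;\subseteq\; C_{p,q,r}(n)\cdot B_1^{n^3}.
\]

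Next I would invoke tensor duality. Since we work in finite dimensions, $(X_\epsilon^n)^* = X_\pi^n$ isometrically, so $B_\pi^n = (B_\epsilon^n)^\circ$. Polarity reverses inclusions and turns scalar factors into their reciprocals, and the polar of the cross-polytope is the cube: $(B_1^{n^3})^\circ = B_\infty^{n^3}$. Hence the inclusion above polarises to
\[
B_\pi^n \;\supseteq\; C_{p,q,r}(n)^{-1}\cdot B_\infty^{n^3}.
\]
Taking $n^3$-dimensional volumes and using $\vol_{n^3}(B_\infty^{n^3})=2^{n^3}$, i.e.\ $\vol_{n^3}(B_\infty^{n^3})^{1/n^3}=2$, I get
\[
\vol_{n^3}(B_\pi^n)^{1/n^3} \;\ge\; \frac{2}{C_{p,q,r}(n)} \;=\; n^{-\alpha},
\]
which is exactly the claimed lower bound.

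There is no real obstacle in this argument: the entire difficulty has already been absorbed into Proposition \ref{lem l1 norm}, whose proof carries out the case analysis via Khintchine and Hölder. The only points to handle with care are to justify the identification $B_\pi^n=(B_\epsilon^n)^\circ$ (which uses the standard fact, already recalled in Section~\ref{sec def tensor}, that the dual of the projective tensor product is the space of operators, equivalently the injective tensor product in finite dimensions) and to correctly invert the scalar factor when passing to the polar, so that the factor of $2$ on the right of the inclusion cancels the factor $2$ hidden in $C_{p,q,r}(n)$ to produce the clean exponent $-\alpha$ in the final bound.
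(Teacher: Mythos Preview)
Your proof is correct and follows essentially the same route as the paper: read Proposition~\ref{lem l1 norm} as the inclusion $B_\epsilon^n\subseteq 2n^{\alpha}B_1^{n^3}$, pass to polars to get $B_\pi^n\supseteq \tfrac{1}{2}n^{-\alpha}B_\infty^{n^3}$, and take the $n^3$-rd root of volumes so that the factor $2$ cancels. The only additional detail you spell out is the justification of $B_\pi^n=(B_\epsilon^n)^\circ$, which the paper uses implicitly.
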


\begin{proof}
From Lemma \ref{lem l1 norm}, we obtain that
\begin{align*}
B_{\epsilon}^n \subseteq 2\,n^{\min\left(\frac{1}{p},\frac1 2\right) +\min\left(\frac{1}{q},\frac1 2\right) +\frac{1}{r}+1}  B_{1}^{n^3}.
\end{align*}
Switching to the polar bodies implies
\begin{align*}
B_{\pi}^n \supseteq \frac{1}{2}\,n^{-\min\left(\frac{1}{p},\frac1 2\right) -\min\left(\frac{1}{q},\frac1 2\right) -\frac{1}{r}-1} B_{\infty}^{n^3}.
\end{align*}
Taking volumes and the $n^3$-rd root, the previous inclusion immediately gives
\begin{align*}
\vol_{n^3}\big(B_{\pi}^n\big)^{1/n^3} \ge n^{-\min\left(\frac{1}{p},\frac1 2\right) -\min\left(\frac{1}{q},\frac1 2\right) -\frac{1}{r}-1},
\end{align*}
which completes the proof.
\end{proof}


\subsection{Upper bounds on the volume}

To compute the matching upper bound on the volume radius of $B_\pi^n$, we will use Lemma \ref{LEM_volume_unit_balls}. To be more precise, the idea is as follows. Using our extended version of Chevet's inequality (see Lemma \ref{cor chevet}), we obtain a lower bound for the volume of $B_\epsilon^n$ from Lemma \ref{LEM_volume_unit_balls}. We then use the Blaschke-Santal\'o inequality (see Lemma \ref{lem:blaschke santalo}) to derive an upper bound on the volume of $B_\pi^n$.

The next proposition will be a consequence of the $3$-fold Chevet inequality, where we apply Lemma \ref{cor chevet} to the space $X_\epsilon^n$ and choose $(x_i)_{i=1}^n, (y_j)_{j=1}^n, (z_k)_{k=1}^n$ to be the standard basis vectors.

\begin{prop}\label{prop chevet}
Let $n\in \N$ and $1\leq p \leq q \leq r \leq \infty$. Let $(\varepsilon_{i,j,k})_{i,j,k=1}^\infty$ be a sequence of independent Bernoulli random variables. Then we have
\begin{align}\label{bound norm inj}
\E\,\bigg\|\sum_{i,j,k=1}^n\e_{i,j,k}~e_{i}\otimes e_j \otimes e_{k}\bigg\|_{\epsilon} \lesssim_{p,q,r} \, n^{\max\big(\frac{1}{p^*},\frac{1}{2}\big) + \max\big(\frac{1}{q^*},\frac{1}{2}\big)+\frac{1}{r^*}-1}.
\end{align}
\end{prop}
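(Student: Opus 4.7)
The plan is to apply the $3$-fold Chevet inequality (Lemma \ref{cor chevet}) to $X_\epsilon^n = \ell_{p^*}^n \otimes_\epsilon \ell_{q^*}^n \otimes_\epsilon \ell_{r^*}^n$ with the unit vector basis $e_1,\dots,e_n$ in each of the three factors, after first using Pisier's inequality (Lemma \ref{lem:gauss dominates rademacher}) to dominate the Rademacher average in \eqref{bound norm inj} by the corresponding Gaussian average. This reduces the problem to computing, for each factor $\ell_{p^*}^n$, two quantities: the weak-$\ell_2$ norm of the unit vector basis, and the expected norm of a standard Gaussian vector.

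For the weak-$\ell_2$ norm, the duality $(\ell_{p^*}^n)^* = \ell_p^n$ gives
\[
\|(e_i)_{i=1}^n\|_{w,2} = \sup_{\|\varphi\|_p = 1}\|\varphi\|_2,
\]
which equals $1$ when $p \le 2$ (by monotonicity of $\ell_p^n$-norms) and $n^{1/2 - 1/p}$ when $p \ge 2$ (by H\"older, attained at the constant vector); both cases unify to $n^{\max(1/p^*,1/2) - 1/2}$. For the Gaussian average, standard moment estimates yield $\E\|\sum_i g_i e_i\|_{p^*} \asymp_p n^{1/p^*}$ for $1 < p \le \infty$, and $\E\max_i|g_i| \asymp \sqrt{\log n}$ for $p = 1$.

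Substituting into Lemma \ref{cor chevet} produces three terms. The one containing $\E\|\sum_k g_k e_k\|_{r^*}$ evaluates to
\[
n^{\max(1/p^*,1/2) - 1/2}\cdot n^{\max(1/q^*,1/2) - 1/2}\cdot n^{1/r^*} = n^{\max(1/p^*,1/2) + \max(1/q^*,1/2) + 1/r^* - 1},
\]
which exactly matches the target. The main technical step is to verify that this first term dominates the other two (in which $r^*$ is replaced by $q^*$ or $p^*$); this reduces, via the ordering $p^* \ge q^* \ge r^*$, to the elementary inequality $(1/2 - 1/s^*)_+ \ge (1/2 - 1/t^*)_+$ whenever $s^* \ge t^*$, which is proved by a short case analysis according to where $2$ falls among $\{p, q, r\}$.

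The main obstacle is the boundary case $p = 1$, where the factor $\E\|g\|_\infty \asymp \sqrt{\log n}$ replaces $n^{1/p^*} = 1$. For every configuration in which at least one of $q, r$ exceeds $1$, the crude bound $\sqrt{\log n} \lesssim \sqrt{n}$ together with a compensating power of $n$ coming from the weak-$\ell_2$ norms or Gaussian averages in the remaining factors absorbs this logarithm into the dominant first term. The fully degenerate case $p = q = r = 1$ must be handled by the trivial direct computation $\|\sum\varepsilon_{i,j,k}\,e_i\otimes e_j\otimes e_k\|_\epsilon = \max_{i,j,k}|\varepsilon_{i,j,k}| = 1$, which matches the target $n^0 = 1$ without invoking Chevet at all.
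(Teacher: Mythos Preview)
Your proof is correct and follows essentially the same route as the paper: pass from Rademacher to Gaussian averages via Lemma~\ref{lem:gauss dominates rademacher}, apply the $3$-fold Chevet inequality (Lemma~\ref{cor chevet}) with the formulas $\|(e_i)\|_{w,2}=n^{\max(1/p^*,1/2)-1/2}$ and $\E\|\sum g_ie_i\|_{p^*}\asymp_p n^{1/p^*}$, and then observe that the ordering $p\le q\le r$ forces the term containing $n^{1/r^*}$ (without a $\max$) to dominate. Your treatment is in fact slightly more thorough than the paper's, which writes $\E\|\sum g_ie_i\|_{\ell_\alpha^n}\asymp_\alpha n^{1/\alpha}$ only for $\alpha<\infty$ and does not separately discuss the boundary case $p=1$ (where $p^*=\infty$ and the Gaussian average is $\asymp\sqrt{\log n}$ rather than $1$); your absorption argument and the direct computation for $p=q=r=1$ close that small gap.
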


\begin{proof}
First, recall that the Rademacher average is smaller than the Gaussian average (see Lemma \ref{lem:gauss dominates rademacher}) and so it is enough to prove inequality \eqref{bound norm inj} with Gaussian random variables. In order to do that, recall the well known fact that, for all $1\leq \alpha < \infty$, and standard Gaussian random variables $g_1,\dots,g_n$,
\begin{align*}
\mathbb E\,\bigg\|\sum_{i=1}^n g_i e_i\bigg\|_{\ell_{\alpha}^n} \asymp_\alpha n^{1/\alpha}.
\end{align*}
Also, it is known that if we consider the standard unit vectors $e_1,\dots,e_n$ in $\ell_\alpha^n$, then we have
\begin{align*}
\big\|(e_i)_{i=1}^n\big\|_{\omega,2} = n ^{\max\big(\frac 1 \alpha, \frac 1 2\big)-\frac 1 2}.
\end{align*}
Then, Lemma \ref{cor chevet} implies
\begin{multline}\label{bound with power}
\E\, \bigg\| \sum_{i,j,k=1}^n\e_{i,j,k}e_{i}\otimes e_j \otimes e_{k}\bigg\|_{\epsilon} \lesssim_{p,q,r}  n^{\max\big(\frac{1}{p^*},\frac{1}{2}\big)+\max\big(\frac{1}{q^*},\frac{1}{2}\big)+\frac{1}{r^*}-1} 
\\
  + n^{\max\big(\frac{1}{p^*},\frac{1}{2}\big)+\max\big(\frac{1}{r^*},\frac{1}{2}\big)+\frac{1}{q^*}-1} + n^{\max\big(\frac{1}{q^*},\frac{1}{2}\big)+\max\big(\frac{1}{r^*},\frac{1}{2}\big)+\frac{1}{p^*}-1} 
\\ \le 3n^{\max\big(\frac{1}{p^*},\frac{1}{2}\big)+\max\big(\frac{1}{q^*},\frac{1}{2}\big)+\frac{1}{r^*}-1},
\end{multline}
where in the last inequality we used the assumption that $p\leq q \leq r$.
\end{proof}

An upper bound on the volume radius $B_{\pi}^n$ is now an immediate consequence of Lemma \ref{LEM_volume_unit_balls}, Proposition \ref{prop chevet}, and the Blaschke-Santal\'o inequality.

\begin{cor}\label{vol upper bound}
Let $n\in\N$ and $1\leq p \leq q \leq r \leq \infty$. Then we have
 \begin{align*}
 \vol_{n^3}\big(B_{\pi}^n\big)^{1/n^3} & \lesssim_{p,q,r}\, n^{-\min\left(\frac{1}{p},\frac1 2\right) -\min\left(\frac{1}{q},\frac1 2\right) -\frac{1}{r}-1}.
 \end{align*}
\end{cor}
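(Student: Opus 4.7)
The plan is to dualize: rather than directly controlling $\vol_{n^3}(B_\pi^n)$, I would first get a lower bound on $\vol_{n^3}(B_\epsilon^n)$ for the injective-norm unit ball and then pass to the polar body via the Blaschke-Santal\'o inequality, since $B_\pi^n = (B_\epsilon^n)^\circ$.

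First I would apply Lemma \ref{LEM_volume_unit_balls} to the $n^3$-dimensional space $X_\epsilon^n = \ell_{p^*}^n \otimes_\epsilon \ell_{q^*}^n \otimes_\epsilon \ell_{r^*}^n$, taking as basis the elementary tensors $e_i \otimes e_j \otimes e_k$. A quick check shows $\|e_i \otimes e_j \otimes e_k\|_\epsilon = 1$, so the hypothesis of the lemma is satisfied and
\[
\vol_{n^3}(B_\epsilon^n) \geq 2^{n^3}\left(\E\,\bigg\|\sum_{i,j,k=1}^n \varepsilon_{i,j,k}\, e_i \otimes e_j \otimes e_k\bigg\|_\epsilon\right)^{-n^3}.
\]
Combining this with the Chevet-type bound in Proposition \ref{prop chevet} and using the identities $\max(1/p^*,1/2) = 1 - \min(1/p,1/2)$, $\max(1/q^*,1/2) = 1 - \min(1/q,1/2)$ and $1/r^* = 1 - 1/r$, the exponent in the resulting lower bound on $\vol_{n^3}(B_\epsilon^n)^{1/n^3}$ simplifies to
\[
\min\!\left(\tfrac 1 p,\tfrac 1 2\right) + \min\!\left(\tfrac 1 q,\tfrac 1 2\right) + \tfrac 1 r - 2.
\]

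Next I would invoke the Blaschke-Santal\'o inequality (Lemma \ref{lem:blaschke santalo}) in the form
\[
\vol_{n^3}(B_\pi^n)^{1/n^3} \cdot \vol_{n^3}(B_\epsilon^n)^{1/n^3} \leq \vol_{n^3}(B_2^{n^3})^{2/n^3},
\]
using that $X_\pi^n = (X_\epsilon^n)^*$ so $B_\pi^n$ is the polar body of $B_\epsilon^n$. Stirling's formula gives $\vol_{n^3}(B_2^{n^3})^{1/n^3} \asymp n^{-3/2}$, so the right-hand side is of order $n^{-3}$. Dividing through by the lower bound on $\vol_{n^3}(B_\epsilon^n)^{1/n^3}$ obtained in the previous step yields
\[
\vol_{n^3}(B_\pi^n)^{1/n^3} \lesssim_{p,q,r} n^{-3} \cdot n^{2 - \min(1/p,1/2) - \min(1/q,1/2) - 1/r} = n^{-\min(1/p,1/2) - \min(1/q,1/2) - 1/r - 1},
\]
which is exactly the claimed bound.

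Every step here is already in place: the Chevet estimate has been recorded in Proposition \ref{prop chevet}, the volume lemma is Lemma \ref{LEM_volume_unit_balls}, and Blaschke-Santal\'o is Lemma \ref{lem:blaschke santalo}. There is no genuine obstacle; the only small points requiring care are (a) verifying that $\|e_i \otimes e_j \otimes e_k\|_\epsilon = 1$ so that Lemma \ref{LEM_volume_unit_balls} applies and (b) the bookkeeping conversion between $\min(\cdot,1/2)$ and $\max(\cdot,1/2)$ when passing from the $X_\epsilon^n$ exponents to those appearing in the final $X_\pi^n$ bound, so that the exponent matches exactly the one in Corollary \ref{prop lower bound ball}, thereby closing the two-sided estimate of Theorem \ref{thm:volume unit balls projective}.
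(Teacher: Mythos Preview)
Your proposal is correct and follows essentially the same approach as the paper's own proof: lower-bound $\vol_{n^3}(B_\epsilon^n)$ via Lemma~\ref{LEM_volume_unit_balls} combined with the Chevet estimate of Proposition~\ref{prop chevet}, then dualize via Blaschke--Santal\'o and Stirling. The only cosmetic difference is that you perform the $\max(1/p^*,1/2)\leftrightarrow\min(1/p,1/2)$ conversion at an earlier stage, and you explicitly note the check $\|e_i\otimes e_j\otimes e_k\|_\epsilon=1$, which the paper leaves implicit.
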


\begin{proof}
Applying Proposition \ref{prop chevet} to $X_\epsilon^n$ and using Lemma \ref{LEM_volume_unit_balls}, we get
\begin{align}\label{eq:lower bound injective ball}
\vol_{n^3}\big(B_\epsilon^n\big)^{1/n^3} \gtrsim_{p,q,r} n^{-\max\big(\frac{1}{p^*},\frac{1}{2}\big) - \max\big(\frac{1}{q^*},\frac{1}{2}\big)-\frac{1}{r^*}+1}.
\end{align}
Lemma~\ref{lem:blaschke santalo} implies that
\begin{align}\label{eq:upper bound volume product}
\vol_{n^3}\left(B_{\pi}^n\right)^{1/n^3} \cdot \vol_{n^3}\big(B_\epsilon^n\big)^{1/n^3} \le \Big(\vol_{n^k}\big(B_{2}^{n^3}\big)^2\Big)^{1/n^3} \asymp n^{-3}.
\end{align}
Thus, combining \eqref{eq:lower bound injective ball} and \eqref{eq:upper bound volume product}, we obtain
\begin{align*}
\vol_{n^3}\big(B_{\pi}^n\big)^{1/n^3} \lesssim_{p,q,r}  n^{\max\big(\frac{1}{p^*},\frac 1 2 \big) + \max\big(\frac{1}{q^*},\frac 1 2 \big)+ \frac{1}{r^*}-4} .
\end{align*}
Since $\max\big(\frac 1{p^*},\frac 1 2\big) -1 = -\min\big(\frac 1 p, \frac 1 2\big)$,
we have
\begin{align*}
\vol_{n^3}\big(B_{\pi}^n\big)^{1/n^3} & \lesssim_{p,q,r}\, n^{-\min\left(\frac{1}{p},\frac1 2\right) -\min\left(\frac{1}{q},\frac1 2\right) -\frac{1}{r}-1},
\end{align*}
and the proof is complete.
\end{proof}

\medskip
\section{The operator norm of the identity operator}\label{sec norm}

In this section we will present upper and lower bounds for $\id\in \mathcal L(\ell_2^{n^3},X^n_\pi)$ that are sharp for most choices of $p,q,r$. The lower bounds are based on Chevet's inequality, on the special structure of the space of diagonal tensors in $X_\pi^n$ or the choice of particular $3$-fold tensors. To obtain upper bounds, we use the results for $2$-fold projective tensor products and a generalized version of an inequality that was proved by Hardy and Littlewood to study bilinear forms. The main theorem in this section reads as follows.

\begin{thmalpha}\label{thm norm iden}
Let $n\in\N$ and $1 \le p \le q \le r \le \infty$. Then we have
\begin{align*}
\|\id:\ell_2^{n^3}\to \X\| \asymp_{p,q,r}
\begin{cases}
n^{\frac 1 2 + \frac 1 r}\,, & r \le 2,
\\
n^{\max\left(\frac 1 q + \frac 1 r, \frac 1 2 \right)}\,, & p \le 2 \le q,\, \frac 1 p + \frac 1 q + \frac 1 r \ge 1,
\\
n^{\frac 1 p + \frac 1 q + \frac 1 r - \frac 1 2}\,, & p \le 2 \le q,\, \frac 1 p + \frac 1 q + \frac 1 r \le 1,
\\
n^{\max\left(\frac 1 p + \frac 1 q + \frac 1 r, \frac 1 2\right) - \frac 1 2 }\,, & 2 \le p.
\end{cases}
\end{align*}
In the case $q \le 2 \le r$, we obtain the following bounds,
\begin{align*}
n^{\frac 1 2 + \frac 1 r} \lesssim_{p,q,r} \|\id:\ell_2^{n^3}\to \X\| \lesssim n^{\min\left(\frac 1 q + \frac 1 r, 1\right)}.
\end{align*}
\end{thmalpha}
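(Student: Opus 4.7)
The proof would split into separate upper and lower bound arguments, each treating the four main cases by one of two complementary techniques.

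For the upper bound in cases 1 and 2 (i.e.\ $r\le 2$ and $p\le 2\le q$ with $\tfrac 1p+\tfrac 1q+\tfrac 1r\ge 1$), I plan to reduce to Sch\"utt's $2$-fold theorem via iteration. Using the associativity $\X=(\ell_q^n\otimes_\pi\ell_r^n)\otimes_\pi\ell_p^n$ and expanding $A=\sum_{i=1}^n A_i\otimes e_i$ with $A_i\in\ell_q^n\otimes_\pi\ell_r^n$, one has
\[
\|A\|_\pi\;\le\;\sum_{i=1}^n\|A_i\|_{\ell_q^n\otimes_\pi\ell_r^n}\;\le\;\bigl\|\id:\ell_2^{n^2}\to\ell_q^n\otimes_\pi\ell_r^n\bigr\|\sum_{i=1}^n\|A_i\|_F \;\le\;\sqrt n\,\|\id_2\|\,\|A\|_2,
\]
by Cauchy--Schwarz in the last step. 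Sch\"utt's $2$-fold formula yields $\|\id_2\|$ in closed form, producing the advertised exponents $n^{1/2+1/r}$ and $n^{\max(1/q+1/r,\,1/2)}$ for cases 1 and 2 respectively. In cases 3 and 4 ($p\le 2\le q$ with $\tfrac 1p+\tfrac 1q+\tfrac 1r\le 1$, and $p\ge 2$) I would dualize,
\[
\|\id:\ell_2^{n^3}\to\X\|=\bigl\|\id:\ell_{p^*}^n\otimes_\epsilon\ell_{q^*}^n\otimes_\epsilon\ell_{r^*}^n\to\ell_2^{n^3}\bigr\|,
\]
and use Theorem~\ref{thm multilinear}. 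If $\tfrac 1p+\tfrac 1q+\tfrac 1r\le\tfrac 12$, the inequality applies directly with $\mu\le 2$, giving $\|B\|_2\le\|B\|_\mu\lesssim\|B\|_\epsilon$. Otherwise, I would choose parameters $\tilde p\ge p,\ \tilde q\ge q,\ \tilde r\ge r$ with $\tfrac 1{\tilde p}+\tfrac 1{\tilde q}+\tfrac 1{\tilde r}=\tfrac 12$, apply Theorem~\ref{thm multilinear} in the enlarged injective tensor and then compare back via the elementary inclusion $B_{\ell_{\tilde a}^n}\subseteq n^{1/a-1/\tilde a}B_{\ell_a^n}$ applied in each of the three factors of the supremum defining $\|\cdot\|_\epsilon$. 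The accumulated factor telescopes to exactly $n^{1/p+1/q+1/r-1/2}$, matching the claim in cases 3 and 4.

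For the lower bound, my primary tool is a random Gaussian tensor $B=\sum_{i,j,k}g_{i,j,k}\,e_i\otimes e_j\otimes e_k$: one has $\|B\|_2\asymp n^{3/2}$, while the $3$-fold Chevet inequality of Lemma~\ref{cor chevet} (after the Rademacher-to-Gaussian passage of Lemma~\ref{lem:gauss dominates rademacher}) controls $\E\|B\|_\epsilon$ in $\ell_{p^*}^n\otimes_\epsilon\ell_{q^*}^n\otimes_\epsilon\ell_{r^*}^n$. The ratio $n^{3/2}/\E\|B\|_\epsilon$ yields the correct exponent in the generic sub-cases. In the remaining degenerate regimes---for example case 2 when $\tfrac 1q+\tfrac 1r<\tfrac 12$, where the random estimate only yields $n^{1/q+1/r}<n^{1/2}$---the missing lower bound is supplied by a $2$-fold embedding: for any pair $\{a,b\}\subset\{p,q,r\}$ and any $A'\in\ell_a^n\otimes_\pi\ell_b^n$, the choice $A=A'\otimes e_1$ forces $\|\id:\ell_2^{n^3}\to\X\|\ge\|\id:\ell_2^{n^2}\to\ell_a^n\otimes_\pi\ell_b^n\|$, and Sch\"utt's $2$-fold formula provides the correct power of $n$. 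The trivial bound $\|\id\|\ge 1$ via $A=e_1\otimes e_1\otimes e_1$ is available throughout.

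The main obstacle will be bookkeeping: within each of the four main cases one must verify that the best of the three possible associativities in the iteration, the right choice of $(\tilde p,\tilde q,\tilde r)$ in Hardy--Littlewood, and the best of the three candidate $2$-fold embeddings all combine to match the advertised $\asymp_{p,q,r}$ relation; this reduces to finitely many elementary algebraic comparisons of $1/p,1/q,1/r$ with $1/2$. For the supplementary range $p\le q\le 2\le r$ the same iteration, with inner product $\ell_q^n\otimes_\pi\ell_r^n$, delivers $\|\id\|\lesssim\sqrt n\cdot\|\id:\ell_2^{n^2}\to\ell_q^n\otimes_\pi\ell_r^n\|\asymp n^{\min(1/q+1/r,\,1)}$ after Sch\"utt's $2$-fold formula, while the random Gaussian lower bound produces only $n^{1/2+1/r}$; this discrepancy is the source of the gap recorded in the theorem.
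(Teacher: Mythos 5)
Your upper-bound half is essentially the paper's argument in dual form: the iteration $\|A\|_\pi\le\sqrt n\,\|\id:\ell_2^{n^2}\to\ell_q^n\otimes_\pi\ell_r^n\|\,\|A\|_2$ is the analogue of Proposition~\ref{prop recursive} combined with Proposition~\ref{bound k is 2}, and your ``enlarge the exponents to $\tfrac1{\tilde p}+\tfrac1{\tilde q}+\tfrac1{\tilde r}=\tfrac12$ and apply Theorem~\ref{thm multilinear}'' step is exactly the paper's use of Corollary~\ref{cor multi}; the telescoping factor $n^{1/p+1/q+1/r-1/2}$ is computed correctly, and it does cover all of case 4 with $\tfrac1p+\tfrac1q+\tfrac1r\ge\tfrac12$, so that part is fine (indeed slightly more streamlined than the paper, which treats $\tfrac1p+\tfrac1q+\tfrac1r\ge1$ by the recursive route). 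The random-tensor lower bound is also the paper's Chevet-based argument, up to replacing signs by Gaussians.

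The genuine gap is in your substitute for the paper's other lower bound. The paper's Lemma~\ref{lem lower bound norm} uses the fact (Arias--Farmer, \cite{AF96}) that the diagonal of $X_\pi^n$ is isometric to $\ell_s^n$ with $\tfrac1s=\min\bigl(\tfrac1p+\tfrac1q+\tfrac1r,1\bigr)$, so the single tensor $\sum_{i=1}^n e_i\otimes e_i\otimes e_i$ gives $\|\id\|\ge n^{\min(1/p+1/q+1/r,1)-1/2}$; this genuinely three-dimensional diagonal estimate cannot be recovered from a random tensor together with two-fold slices $A'\otimes e_1$. Your slice bound only yields $n^{\min(1/a+1/b,1)-1/2}$ for a pair $\{a,b\}$, which forgets the third exponent entirely. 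Concretely, take $p=3/2$, $q=4$, $r=12$, so $\tfrac1p+\tfrac1q+\tfrac1r=1$ and the theorem asserts $\|\id\|\gtrsim n^{1/2}$: your random bound gives $n^{1/q+1/r}=n^{1/3}$, and the three slice bounds give $n^{5/12}$, $n^{1/4}$ and $n^{0}$ respectively, so your best lower bound is $n^{5/12}$. The same failure occurs throughout case 3 whenever $p<2$ and $r<\infty$, and in the subcase of case 2 where $\tfrac1q+\tfrac1r<\tfrac12$ but $\tfrac1p+\tfrac1q<1$ (e.g.\ $p=5/3$, $q=4$, $r=5$). To close the gap you need the diagonal estimate, either by citing \cite{AF96} as the paper does, or directly: when $\tfrac1p+\tfrac1q+\tfrac1r\ge1$, H\"older shows that the same diagonal tensor has norm at most $1$ in the dual space $\ell_{p^*}^n\otimes_\epsilon\ell_{q^*}^n\otimes_\epsilon\ell_{r^*}^n$, and pairing it with itself gives $\bigl\|\sum_i e_i\otimes e_i\otimes e_i\bigr\|_\pi\ge n$, hence $\|\id\|\ge n^{1/2}$; the case $\tfrac1p+\tfrac1q+\tfrac1r\le1$ follows similarly with the appropriate $\ell_s$ normalization.
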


\medskip

The lower bound is proved in Section~\ref{sec lower bound norm} and the upper bound in Section~\ref{sec upper bound norm}. Also, in Section~\ref{sec special cases} we show how the lower bound can be improved in some special cases when $q\leq 2 \leq r$ in the previous theorem. In what follows, we will use the shorthand notation $\norm{\id}_{\ell_2^{n^3}\to \X}$ for $\|\id:\ell_2^{n^3}\to \X\|$.

\subsection{Lower bounds on the operator norm}\label{sec lower bound norm}

The following lower bound can be obtained by considering the space of diagonal tensors in $X_{\pi}^n$ and by using the $3$-fold version of Chevet's inequality (see Proposition \ref{prop chevet}).

\begin{lem} \label{lem lower bound norm}
Let $n\in\N$ and $1 \le p, q, r \le \infty$. Then we have
\begin{align}\label{lower bound idem}
\norm{\id}_{\ell_2^{n^3}\to \X} \gtrsim_{p,q,r} \max\left(n^{\max\left(\min\left(1,\frac 1 {p} + \frac 1 q + \frac 1 r\right) -\frac 1 2,0\right)}, n^{\min\left(\frac{1}{p}, \frac 1 2\right) + \min\left(\frac{1}{q}, \frac 1 2\right) + \frac 1 {r} -\frac 1 2}\right).
\end{align}
\end{lem}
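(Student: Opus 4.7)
The strategy is to exhibit explicit tensors $T\in X_\epsilon^n$ with large ratio $\|T\|_2/\|T\|_\epsilon$, since
\[
\|\id:\ell_2^{n^3}\to X_\pi^n\| = \sup_{T\in X_\epsilon^n}\frac{\|T\|_2}{\|T\|_\epsilon}
\]
by the self-duality of $\ell_2$ together with $(X_\pi^n)^*=X_\epsilon^n$. Equivalently, one may exhibit $A\in X_\pi^n$ with large ratio $\|A\|_\pi/\|A\|_2$ via trace duality.

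For the first entry in the $\max$, take the diagonal tensor $I=\sum_{i=1}^n e_i\otimes e_i\otimes e_i$, with $\|I\|_2=n^{1/2}$. H\"older's inequality with exponents $p,q,s$ where $1/s:=1-1/p-1/q$, combined with the embedding constant $\|z\|_s\leq n^{(1/s-1/r)_+}\|z\|_r$, yields
\[
\|I\|_\epsilon=\sup_{\|x\|_p,\|y\|_q,\|z\|_r\leq 1}\Bigl|\sum_i x_iy_iz_i\Bigr|\leq n^{\max(1-1/p-1/q-1/r,\,0)}.
\]
Trace duality then gives $\|I\|_\pi\geq n/\|I\|_\epsilon\geq n^{\min(1,\,1/p+1/q+1/r)}$, and dividing by $\|I\|_2$ produces $\|\id\|\geq n^{\min(1,1/p+1/q+1/r)-1/2}$. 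Together with the trivial bound $\|\id\|\geq 1$ (from $e_1\otimes e_1\otimes e_1$), this delivers the first term $n^{\max(\min(1,1/p+1/q+1/r)-1/2,\,0)}$.

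For the second entry, apply Lemma~\ref{cor chevet} in $X_\epsilon^n$ to the Gaussian tensor $T=\sum g_{ijk}\,e_i\otimes e_j\otimes e_k$, all three sequences being the coordinate basis. Using $\|(e_i)\|_{w,2}^{(\ell_{s^*}^n)}=\sup_{\|\phi\|_s\leq 1}\|\phi\|_2 = n^{(1/2-1/s)_+}$ and, for $s>1$, $\E\|\sum g_ke_k\|_{\ell_{s^*}^n}\asymp_s n^{1/s^*}$, a term-by-term comparison that uses the ordering $p\leq q\leq r$ shows the dominating Chevet summand is the one whose Gaussian average lies in $\ell_{r^*}^n$, yielding
\[
\E\|T\|_\epsilon\lesssim_{p,q,r}\Lambda:=n^{(1/2-1/p)_+ + (1/2-1/q)_+ + 1 - 1/r}.
\]
Since $\|T\|_2\geq n^{3/2}/2$ with high probability by Gaussian concentration, Markov's inequality plus a union bound produce a deterministic $T$ with $\|T\|_2/\|T\|_\epsilon\gtrsim n^{3/2}/\Lambda$; the identity $1/2-(1/2-1/s)_+=\min(1/s,1/2)$ rewrites this as $n^{\min(1/p,1/2)+\min(1/q,1/2)+1/r-1/2}$, the second term. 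The one case not covered is $r=1$ (forcing $p=q=r=1$), where $\E\|g\|_\infty\asymp\sqrt{\log n}$ costs a spurious logarithm; this is handled by the rank-one all-ones tensor $\mathbf{1}=(\sum e_i)\otimes(\sum e_j)\otimes(\sum e_k)$, whose projective norm equals $n^{1/p+1/q+1/r}=n^3$ (upper bound from the factorization, lower bound from the dual pairing $\langle\mathbf{1},\mathbf{1}\rangle/\|\mathbf{1}\|_\epsilon$) and whose Euclidean norm is $n^{3/2}$, giving $\|\id\|\geq n^{3/2}$. The main technical difficulty I anticipate is the detailed term-by-term comparison in the 3-fold Chevet sum needed to isolate the dominating summand uniformly over the entire regime $1\leq p\leq q\leq r\leq\infty$.
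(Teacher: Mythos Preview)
Your argument is correct and matches the paper's approach: the diagonal tensor $\sum_i e_i\otimes e_i\otimes e_i$ for the first term in the maximum, and a random sign tensor controlled by the $3$-fold Chevet inequality (the paper's Proposition~\ref{prop chevet}) for the second. The only differences are in execution: the paper obtains the projective norm of the diagonal tensor by citing the identification of diagonal tensors with an $\ell_s$-space from \cite{AF96}, whereas your H\"older-plus-trace-duality computation is self-contained (though note that your auxiliary exponent $1/s=1-1/p-1/q$ tacitly assumes $1/p+1/q\le 1$; in the complementary case one should appeal directly to three-term H\"older to get $\|I\|_\epsilon\le 1$); and your explicit treatment of the edge case $r=1$ (where $r^*=\infty$ costs a $\sqrt{\log n}$ in the Chevet bound) via the rank-one all-ones tensor patches a point the paper's proof does not address.
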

\begin{proof}
By Theorem 1.3 in~\cite{AF96}, it is known that the space of diagonal tensors in $X_{\pi}^n$ is isometric to $\ell_s^{n^3}$, where $s$ is given by
\begin{align*}
s := \frac 1 {\min\left(\frac 1 p + \frac 1 q + \frac 1 r, 1\right)}.
\end{align*}
Hence, we have
\begin{align*}
\norm{\id}_{\ell_2^{n^3}\to \X} \ge \frac{\left\|\sum_{i,j,k=1}^ne_i\otimes e_i \otimes e_i\right\|_{\pi}}{\left\|\sum_{i,j,k=1}^ne_i\otimes e_i \otimes e_i\right\|_{\ell_2^{n^3}}} = \frac{\left\|\sum_{i,j,k=1}^ne_i\otimes e_i \otimes e_i\right\|_{\ell_s^{n^3}}}{\left\|\sum_{i,j,k=1}^ne_i\otimes e_i \otimes e_i\right\|_{\ell_2^{n^3}}} = n^{\frac 1 s -\frac 1 2}.
\end{align*}
Since $\norm{\id}_{\ell_2^{n^k}\to \X} \ge 1$, we have in fact
\begin{align}\label{first lower bound}
\norm{\id}_{\ell_2^{n^k}\to \X} \ge n^{\max\left(\min\left(1,\frac 1 {p} + \frac 1 q + \frac 1 r\right) -\frac 1 2,0\right)}.
\end{align}
Next, notice that by Proposition \ref{prop chevet}, there exists a choice of signs $(\e_{i,j,k})_{i,j,k=1}^n$ such that
\begin{align*}
\left\|\sum_{i,j,k=1}^n~\e_{i,j,k}e_1\otimes e_2 \otimes e_3\right\|_{\epsilon} \lesssim_{p,q,r} n^{\max\left(\frac 1 {p^*},\frac 1 2 \right) + \max\left(\frac 1 {q*},\frac 1 2 \right) + \frac 1 {r^*}- 1 }.
\end{align*}
On the other hand,
\begin{equation*}
\left\|\sum_{i,j,k=1}^n\e_{i,j,k}~e_{1}\otimes e_2 \otimes e_3\right\|_{\ell_2^{n^3}} = n^{\frac 3 2}.
\end{equation*}
Thus,
\begin{align}\label{second lower bound}
\nonumber \big\|\id\big\|_{\ell_2^{n^k} \to \X} & = \big\|\id\big\|_{(\X)^* \to \ell_2^{n^k}} \ge \frac{\left\|\sum_{i,j,k=1}^n\e_{i,j,k}~e_{1}\otimes e_2 \otimes e_3\right\|_{\ell_2^{n^3}}}{\left\|\sum_{i,j,k=1}^n\e_{i,j,k}~e_{1}\otimes e_2 \otimes e_3\right\|_{\epsilon}}
\\
& \gtrsim_{p,q,r} n^{\frac 5 2 - \max\left(\frac 1 {p^*},\frac 1 2 \right) - \max\left(\frac 1 {q*},\frac 1 2 \right) - \frac 1 {r^*}} \stackrel{(*)}{=} n^{\min\left(\frac 1 {p},\frac 1 2 \right) + \min\left(\frac 1 {q},\frac 1 2 \right) + \frac 1 {r} - \frac 1 2},
\end{align}
where in ($*$) we used the fact that for any $p \ge 1$, $1-\max\big(\frac 1{p^*}, \frac 1 2\big) = \min\big(\frac 1 p, \frac 1 2\big)$. Combining~\eqref{first lower bound} and~\eqref{second lower bound}, the proof is complete.
\end{proof}

\subsection{Upper bounds on the operator norm}\label{sec upper bound norm}

For tensor products of two spaces, the norm of the identity was estimated in~\cite{Sch82}.

\begin{prop}\label{bound k is 2}
Let $n\in\N$ and $1 \le p \le q \le \infty$. Then
\begin{align*}
\|\id\|_{\ell_2^{n^2} \to \ell_p^n\otimes_\pi\ell_q^n} \asymp_{p,q}
\begin{cases}
 n^{\frac 1 {q}} & q \leq 2,
 \\
 n^{\min\left(\frac 1 {p} + \frac{1}{q},1\right)-\frac{1}{2}} & p \leq 2 \leq q,
 \\
 n^{\max\left(\frac{1}{p}+\frac{1}{q}, \frac 1 2 \right) -\frac{1}{2}} & 2\le p.
\end{cases}
\end{align*}
\end{prop}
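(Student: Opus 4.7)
The plan is to mirror the argument of Section~\ref{sec norm} with two tensor factors in place of three, treating lower and upper bounds separately.

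For the lower bound, I would combine two constructions in the spirit of Lemma~\ref{lem lower bound norm}. First, by (the $k=2$ case of) the Arias--Farmer identification from~\cite{AF96}, the diagonal subspace $\mathrm{span}\{e_i\otimes e_i\}_{i=1}^n\subset\ell_p^n\otimes_\pi\ell_q^n$ is isometric to $\ell_s^n$ with $1/s=\min(1/p+1/q,1)$; plugging the Euclidean unit vector $n^{-1/2}\sum_i e_i\otimes e_i$ into the identity gives the bound $n^{\min(1/p+1/q,1)-1/2}$. Second, the $2$-fold Chevet inequality~\eqref{ineq chevet 2} together with Lemma~\ref{lem:gauss dominates rademacher} produces a sign matrix $(\epsilon_{ij})_{i,j=1}^n$ with
\[
\Big\|\sum_{i,j=1}^n\epsilon_{ij}\,e_i\otimes e_j\Big\|_\epsilon \lesssim_{p,q} n^{\max(1/p^*,1/2)+1/q^*-1/2}+n^{\max(1/q^*,1/2)+1/p^*-1/2}.
\]
Since the Hilbert--Schmidt norm of the sign matrix is $n$, the duality $\|\id:\ell_2^{n^2}\to X\|=\|\id:X^*\to\ell_2^{n^2}\|$ converts this into a second lower bound, namely $n$ divided by the right-hand side above. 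A case-by-case check in the three regimes $q\le 2$, $p\le 2\le q$, $p\ge 2$, using the identity $1-\max(1/\alpha^*,1/2)=\min(1/\alpha,1/2)$ as on p.~\pageref{second lower bound}, shows that the larger of the two bounds meets the exponent in the statement.

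For the upper bound I would again dualize and prove, equivalently, that every matrix $A$ satisfies $\|A\|_{\ell_2^{n^2}}\lesssim_{p,q}(\text{target exponent})\cdot\|A:\ell_p^n\to\ell_{q^*}^n\|$. In the regime $q\le 2$, testing the operator on each coordinate vector $e_j\in\ell_p^n$ gives $(\sum_i|A_{ij}|^{q^*})^{1/q^*}\le\|A\|_{\mathrm{op}}$; since $q^*\ge 2$, the inequality $\|\cdot\|_2\le n^{1/2-1/q^*}\|\cdot\|_{q^*}$ followed by a sum in $j$ produces the factor $n^{1/q}$. In the regime $p\ge 2$ (so that also $q\ge 2$), the matching upper bound comes from the $k=2$ bilinear Hardy--Littlewood inequality of~\cite{PP81}, combined with a H\"older step from $\ell_\mu^{n^2}$ down to $\ell_2^{n^2}$. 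The mixed case $p\le 2\le q$ is obtained by interpolation: factor $\id:\ell_2^n\hookrightarrow\ell_{p^*}^n$ through its known norm $n^{\max(1/p-1/2,0)}$ and apply the $q\le 2$ step to the residual operator $\ell_2^n\to\ell_{q^*}^n$.

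The principal obstacle is not a single inequality but the regime bookkeeping: one must verify that the thresholds $1/p+1/q=1$ and $1/p+1/q=1/2$ inside the $\min$ and $\max$ of the statement come from the correct transitions---between diagonal and Chevet on the lower side, and between the elementary, Hardy--Littlewood, and interpolation arguments on the upper side. As an independent sanity check, the same exponents can be read off from Sch\"utt's volume-ratio formula for $\ell_p^n\otimes_\pi\ell_q^n$ (quoted in the Introduction) combined with the enough-symmetries identity~\eqref{formula vol ell}.
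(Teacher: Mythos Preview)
The paper does not prove this proposition; it quotes it from Sch\"utt~\cite{Sch82}. Your plan is instead to re-derive the $2$-fold result by running the machinery of Section~\ref{sec norm} one level down. The lower-bound half (diagonal plus Chevet, supplemented by the trivial $\|\id\|\ge 1$ when $1/p+1/q<1/2$) works as you describe.

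The upper-bound half has a gap in the mixed regime $p\le 2\le q$. Your interpolation factors the identity as $\ell_2^{n^2}\to \ell_2^n\otimes_\pi\ell_q^n \to \ell_p^n\otimes_\pi\ell_q^n$, paying $n^{1/p-1/2}$ for the second arrow and at best $n^{1/2}$ for the first via coordinate testing; the product $n^{1/p}$ never matches the target $n^{\min(1/p+1/q,1)-1/2}$, since the $1/q$-dependence is lost. A parallel issue arises in your $p\ge 2$ argument once $1/p+1/q>1/2$: the hypothesis of~\cite{PP81} then fails, and there is no $\mu>2$ version to H\"older down from. Both defects are repaired by the device in the proof of Lemma~\ref{lem upper bound norm}: whenever $\tfrac12\le\tfrac1p+\tfrac1q\le 1$, choose $\tilde p\ge p$, $\tilde q\ge q$ with $\tfrac1{\tilde p}+\tfrac1{\tilde q}=\tfrac12$, apply the $k=2$ Hardy--Littlewood bound in $\ell_{\tilde p}^n\otimes_\pi\ell_{\tilde q}^n$, and absorb the H\"older factor $n^{1/p+1/q-1/2}$ at the level of the tensor factors. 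For the remaining sub-case $p\le 2\le q$ with $1/p+1/q\ge 1$, drop the extra factoring: since $q^*\le 2$, direct coordinate testing already yields $\|A\|_{\ell_2^{n^2}}\le n^{1/2}\|A:\ell_p^n\to\ell_{q^*}^n\|$.
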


To obtain an upper bound for $3$-fold tensor products of $\ell_p$-spaces, we use the following recursive formula.

\begin{prop}\label{prop recursive}
Let $n\in\N$ and $1 \le p,q,r \le \infty$. Then
\begin{align*}
\|\id\|_{\ell_2^{n^3} \to \X} \le n^{\min\left(\frac 1 {r}, \frac 1 2\right)}\cdot \|\id\|_{\ell_2^{n^2} \to \ell_p^n \otimes_{\pi} \ell_q^n}.
\end{align*}
\end{prop}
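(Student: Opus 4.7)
My plan is to decompose an arbitrary $A \in \ell_2^{n^3}$ along the third tensor factor and reduce the problem to the two-fold identity estimate. Writing $A = \sum_{k=1}^n A_k \otimes e_k$, where $A_k \in \R^{n^2}$ denotes the $k$-th slice of $A$ (viewed simultaneously as an element of $\ell_2^{n^2}$ and of $\ell_p^n \otimes_\pi \ell_q^n$), and setting $\kappa := \|\id : \ell_2^{n^2} \to \ell_p^n \otimes_\pi \ell_q^n\|$, the projective triangle inequality combined with $\|e_k\|_r = 1$ gives
\[
\|A\|_{\X} \;\le\; \sum_{k=1}^n \|A_k\|_{\ell_p^n \otimes_\pi \ell_q^n} \;\le\; \kappa \sum_{k=1}^n \|A_k\|_2.
\]
When $r \le 2$, so that $\min(1/r, 1/2) = 1/2$, a direct Cauchy--Schwarz bound $\sum_k \|A_k\|_2 \le \sqrt{n}\,\|A\|_2$ finishes the case.

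For $r \ge 2$ the factor $\sqrt{n}$ is too large, and I would replace the standard basis expansion by a Rademacher resolution of the identity. Using $\mathbb{E}_\varepsilon[\varepsilon_k \varepsilon] = e_k$ for $\varepsilon$ uniformly distributed on $\{-1,1\}^n$, we may rewrite $A$ as an average of elementary tensors
\[
A \;=\; \mathbb{E}_\varepsilon\bigg[\bigg(\sum_{k=1}^n \varepsilon_k A_k\bigg) \otimes \varepsilon\bigg].
\]
Convexity of the projective norm (Jensen's inequality) and the product formula on elementary tensors give, using $\|\varepsilon\|_r = n^{1/r}$,
\[
\|A\|_{\X} \;\le\; n^{1/r}\, \mathbb{E}_\varepsilon \bigg\|\sum_{k=1}^n \varepsilon_k A_k\bigg\|_{\ell_p^n \otimes_\pi \ell_q^n}.
\]
Applying the two-fold identity estimate inside the expectation and then Jensen together with the orthogonality $\mathbb{E}_\varepsilon[\varepsilon_i \varepsilon_j] = \delta_{ij}$ yields
\[
\mathbb{E}_\varepsilon \bigg\|\sum_k \varepsilon_k A_k\bigg\|_{\ell_p^n \otimes_\pi \ell_q^n} \;\le\; \kappa \bigg(\mathbb{E}_\varepsilon\bigg\|\sum_k \varepsilon_k A_k\bigg\|_2^2\bigg)^{1/2} \;=\; \kappa\,\|A\|_2,
\]
so that $\|A\|_{\X} \le n^{1/r}\kappa\,\|A\|_2$ in this regime. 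Combining the two cases gives the bound $\|A\|_{\X} \le n^{\min(1/r, 1/2)} \kappa\, \|A\|_2$ as required.

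No step is genuinely difficult; the one conceptual observation is that the Rademacher resolution of the standard basis in $\ell_r^n$ trades the unfavourable $\ell_1$-type count of the naive expansion against the $\ell_r$ norm of a sign vector, namely $\|\varepsilon\|_r = n^{1/r}$, which is exactly the improvement over $\sqrt{n}$ needed when $r \ge 2$.
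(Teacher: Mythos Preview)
Your proof is correct, but it proceeds along a different (and arguably more direct) route than the paper's. The paper argues by duality: it passes to $\|\id\|_{(\X)^* \to \ell_2^{n^3}}$, views $A \in (\X)^*$ as an operator $\ell_r^n \to (\ell_p^n \otimes_\pi \ell_q^n)^*$, and obtains two lower bounds on $\|A\|_{\ell_r^n \to \ell_2^{n^2}}$ by testing against $x = n^{-1/r}\varepsilon$ (yielding the $n^{1/r}$ factor) and against standard unit vectors (yielding the $\sqrt{n}$ factor). You instead work primally, bounding $\|A\|_\pi$ from above via the slice decomposition $A = \sum_k A_k \otimes e_k$; for $r \ge 2$ you replace this by the Rademacher resolution $A = \mathbb{E}_\varepsilon\big[(\sum_k \varepsilon_k A_k)\otimes \varepsilon\big]$ and exploit convexity plus $\|\varepsilon\|_r = n^{1/r}$. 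The two approaches are essentially dual to one another---your Rademacher averaging and the paper's sign-vector test are mirror images---but your version avoids the passage through the injective side and the operator interpretation, making the argument somewhat more self-contained. Conversely, the paper's dual formulation fits more naturally with the rest of Section~\ref{sec norm}, where injective-norm estimates (e.g.\ via Chevet) are the running theme.
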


\begin{proof}
First, note that
\begin{align*}
\|\id\|_{\ell_2^{n^{2}} \to \ell_{p}^n\otimes_{\pi}\ell_{q}^n} = \|\id\|_{(\ell_{p}^n\otimes_{\pi}\ell_{q}^n)^* \to \ell_2^{n^{2}}}.
\end{align*}
Let $A\in (\X)^*$. By the definition of injective tensor product norm, we have
\begin{align}\label{lower bound product}
\|\id\|_{(\ell_{p}^n\otimes_{\pi}\ell_{q}^n)^*\to \ell_2^{n^{2}}}\cdot \|A\|_{(\X)^*} & = \|\id\|_{(\ell_{p}^n\otimes_{\pi}\ell_{q}^n)^* \to \ell_2^{n^{2}}}\cdot \|A\|_{\ell_{r}^n\to (\ell_{p}^n\otimes_{\pi}\ell_{q}^n)^*} \ge \|A\|_{\ell_{r}^n \to \ell_2^{n^{2}}}.
\end{align}
Also, for $x = (x_1,\dots,x_n)\in \ell_{r}^n$, we have
\begin{align*}
Ax = \sum_{i,j=1}^n \left[\sum_{k=1}^nA_{i,j,k}x_k\right] e_{i}\otimes e_j \in \R^{n^{2}}.
\end{align*}
Thus, we have
\begin{align*}
\|A\|_{\ell_{r}^n \to \ell_2^{n^{2}}}^2 = \sup_{\|x\|_{r}=1}\|Ax\|_{\ell_2^{n^{2}}}^2 =  \sup_{\|x\|_{\ell_r^n}=1}\sum_{i,j=1}^n\left|\sum_{k=1}^nA_{i,j,k}x_k\right|^2
\end{align*}
Choosing $x= n^{-1/r}\cdot \e$, $\e\in \{-1,1\}^n$, we get
\begin{align}\label{lower bound ell 2}
\|A\|_{\ell_{r}^n \to \ell_2^{n^{2}}}^2 \ge n^{-\frac{2}{r}}\sum_{i,j=1}^n\mathbb E\left|\sum_{k=1}^nA_{i,j,k}\e_k\right|^2  = n^{-\frac 2{r}}\sum_{i,j,k=1}^n\left|A_{i,j,k}\right|^2 = \left(n^{-1/r}\|A\|_{\ell_2^{n^3}}\right)^2.
\end{align}
Combining~\eqref{lower bound product} and~\eqref{lower bound ell 2}, we get
\begin{align*}
n^{\frac{1}{r}}\,\|\id\|_{(\ell_{p}^n\otimes_{\pi}\ell_{q}^n)^* \to \ell_2^{n^{2}}}\, \|A\|_{\epsilon} \ge \|A\|_{\ell_2^{n^3}}.
\end{align*}
This gives
\begin{align}\label{bound with pk}
\|\id\|_{\ell_2^{n^3} \to \X}  \le n^{\frac{1}{r}}\, \|\id\|_{\ell_2^{n^2} \to \ell_p^n \otimes_{\pi} \ell_q^n}.
\end{align}
Also, since for all $x\in \R^n$, $\|x\|_{\ell_{\infty}^n} \le \|x\|_{\ell_2^n} \le \sqrt{n}\|x\|_{\ell_{\infty}^n}$, we also have
\begin{align*}
\|\id\|_{ (\ell_{p}^n\otimes_{\pi}\ell_{q}^n)^* \to \ell_2^{n^{2}}}^2\,\|A\|_{\epsilon}^2 & \ge \sup_{\|x\|_{\ell_r^n}=1}\sum_{i,j=1}^n\left|\sum_{k=1}^nA_{i,j,k}x_k\right|^2 \ge \max_{1\le k \le n}\sum_{i,j=1}^n\left|A_{i,j,k}\right|^2 \ge \frac 1 n \|A\|_{\ell_2^{n^3}},
\end{align*}
which implies
\begin{align}\label{bound with sqrt}
\|\id\|_{\ell_2^{n^3} \to \X}  \le \sqrt{n}\, \|\id\|_{\ell_2^{n^2} \to \ell_p^n \otimes_{\pi} \ell_q^n}.
\end{align}
Combining \eqref{bound with pk} and \eqref{bound with sqrt}, the result follows.
\end{proof}

Another useful tool is the following corollary, which follows immediately from Theorem~\ref{thm multilinear} in Section~\ref{sec prelim}.

\begin{cor}\label{cor multi}
Let $n\in\N$ and assume that $1 \le p,q,r \le \infty$ are such that $\frac 1 p + \frac 1 q + \frac 1 r \le \frac 1 2$. Then
\begin{align*}
\|\id\|_{\ell_2^{n^3}\to \X} \lesssim 1.
\end{align*}
\end{cor}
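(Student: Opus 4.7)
My plan is to dualize and then apply Theorem \ref{thm multilinear} directly. Since $\ell_2^{n^3}$ is self-dual and $(X_\pi^n)^* = X_\epsilon^n$, the identity map $\id: \ell_2^{n^3} \to X_\pi^n$ has the same operator norm as its adjoint $\id: X_\epsilon^n \to \ell_2^{n^3}$. So it suffices to show that $\|A\|_{\ell_2^{n^3}} \lesssim \|A\|_{X_\epsilon^n}$ for every tensor $A \in X_\epsilon^n$ under the hypothesis $\frac{1}{p}+\frac{1}{q}+\frac{1}{r} \le \frac{1}{2}$.

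The key observation is about the exponent $\mu$ from Theorem~\ref{thm multilinear}. Under the assumption $\frac{1}{p}+\frac{1}{q}+\frac{1}{r} \le \frac{1}{2}$, we have
\[
\mu = \frac{3}{2-\frac{1}{p}-\frac{1}{q}-\frac{1}{r}} \le \frac{3}{2 - \frac{1}{2}} = 2.
\]
Since $\mu \le 2$, the standard monotonicity of $\ell_s$-norms in $s$ for finite-dimensional sequences yields $\|A\|_{\ell_2^{n^3}} \le \|A\|_{\ell_\mu^{n^3}}$. Combining this with Theorem~\ref{thm multilinear} gives
\[
\|A\|_{\ell_2^{n^3}} \le \|A\|_{\ell_\mu^{n^3}} \lesssim \|A\|_{X_\epsilon^n},
\]
so $\|\id\|_{X_\epsilon^n \to \ell_2^{n^3}} \lesssim 1$ and the claim follows by duality.

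There is no real obstacle here; this is essentially a one-line consequence of Theorem~\ref{thm multilinear} after duality and the trivial embedding $\ell_\mu^{n^3} \hookrightarrow \ell_2^{n^3}$. The only thing to verify carefully is the identification $(X_\pi^n)^* = X_\epsilon^n$, which was already recorded in Section~\ref{sec def tensor}, and the arithmetic that the threshold $\frac{1}{p}+\frac{1}{q}+\frac{1}{r} \le \frac{1}{2}$ is exactly what makes $\mu \le 2$.
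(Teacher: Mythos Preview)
Your proof is correct and follows essentially the same route as the paper: dualize so that it suffices to bound $\|\id\|_{(X_\pi^n)^*\to \ell_2^{n^3}}$, observe that the hypothesis $\frac{1}{p}+\frac{1}{q}+\frac{1}{r}\le \frac{1}{2}$ forces $\mu\le 2$, and then chain the monotonicity $\|A\|_{\ell_2^{n^3}}\le \|A\|_{\ell_\mu^{n^3}}$ with Theorem~\ref{thm multilinear}. The paper's own argument is identical in structure and length.
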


\begin{proof}
Since we assume that $\frac 1 p + \frac 1 q + \frac 1 r \le \frac 1 2$, $\mu$ as defined in~\eqref{def mu}, we have $\mu \le 2$. Hence, we have
\begin{align*}
\|A\|_{\ell_2^{n^3}} \le \|A\|_{\ell_{\mu}^{n^3}} \lesssim \|A\|_{(\X)^*}.
\end{align*}
Therefore, we have
\begin{align*}
\|\id\|_{\ell_2^{n^3}\to \X} = \|\id\|_{(\X)^* \to  \ell_2^{n^3}} \lesssim 1,
\end{align*}
which completes the proof.
\end{proof}

Using the above upper bounds, we obtain the following.

\begin{lem}\label{lem upper bound norm}
Let $n\in\N$ and $1 \le p \le q \le r \le \infty$. Then we have
\begin{numcases}{\|\id\|_{\ell_2^{n^3}\to \X} \lesssim_{p,q,r}}
n^{\frac 1 2 + \frac 1 r}, & $r \le 2$, \label{r less 2}
\\
n^{\min\left(\frac 1 q + \frac 1 r, 1\right)}, & $q \le 2 \le r$, \label{q less 2 less r}
\\
n^{\max\left(\frac 1 q + \frac 1 r, \frac 1 2 \right)}, & $p \le 2 \le q$, $\frac 1 p + \frac 1 q + \frac 1 r \ge 1$, \label{p less 2 less q sum big}
\\
n^{\frac 1 p + \frac 1 q + \frac 1 r - \frac 1 2}, & $p \le 2 \le q$, $\frac 1 p + \frac 1 q + \frac 1 r \le 1$, \label{p less 2 less q sum small}
\\
n^{\max\left(\frac 1 p + \frac 1 q + \frac 1 r, \frac 1 2\right) - \frac 1 2 }, & $2 \le p$. \label{p bigger 2}
\end{numcases}
\end{lem}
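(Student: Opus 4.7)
The plan is to match each of the five bounds in the lemma using three ingredients already in place: the two-fold estimate Proposition \ref{bound k is 2}, the recursive inequality Proposition \ref{prop recursive} (valid after any permutation of $(p,q,r)$ by commutativity of $\otimes_\pi$), and Corollary \ref{cor multi}, which gives $\|\id\|_{\ell_2^{n^3}\to\X}\lesssim 1$ whenever $\tfrac1p+\tfrac1q+\tfrac1r\le\tfrac12$. Four of the cases follow from a direct combination of these; the fifth ($2\le p$) requires an extra H\"older-type embedding and is the main obstacle.

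For the three cases $r\le 2$, $q\le 2\le r$, and $p\le 2\le q$ with $\tfrac1p+\tfrac1q+\tfrac1r\ge 1$, all of which satisfy $p\le 2$, I would permute so that $p$ is the extracted index in Proposition \ref{prop recursive}. The prefactor is $n^{\min(1/p,1/2)}=n^{1/2}$, and the remaining $\|\id\|_{\ell_2^{n^2}\to\ell_q^n\otimes_\pi\ell_r^n}$ is read off Proposition \ref{bound k is 2}: its three subcases contribute $n^{1/r}$, $n^{\min(1/q+1/r,1)-1/2}$, and $n^{\max(1/q+1/r,1/2)-1/2}$, and multiplying each by $n^{1/2}$ reproduces the three target bounds. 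For the fourth case ($p\le 2\le q$ and $\tfrac1p+\tfrac1q+\tfrac1r\le 1$), I would instead use Proposition \ref{prop recursive} as stated, extracting $r\ge 2$ to obtain prefactor $n^{1/r}$. The hypothesis forces $\tfrac1p+\tfrac1q\le 1$, so the middle subcase of Proposition \ref{bound k is 2} applied to $\ell_p^n\otimes_\pi\ell_q^n$ gives $n^{1/p+1/q-1/2}$, and the product is the claimed $n^{1/p+1/q+1/r-1/2}$.

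The remaining case $2\le p$ is the main obstacle since neither the recursion nor the two-fold bound is sharp on its own, and here I would combine a H\"older-type embedding with Corollary \ref{cor multi}. When $\tfrac1p+\tfrac1q+\tfrac1r\le\tfrac12$, Corollary \ref{cor multi} directly gives $\|\id\|\lesssim 1$, matching $n^{\max(1/p+1/q+1/r,1/2)-1/2}=1$. Otherwise $\tfrac1p+\tfrac1q+\tfrac1r>\tfrac12$, and I would choose $\tilde p\ge p$, $\tilde q\ge q$, $\tilde r\ge r$ in $[2,\infty]$ with $\tfrac1{\tilde p}+\tfrac1{\tilde q}+\tfrac1{\tilde r}=\tfrac12$ (feasible exactly because the reciprocal sum exceeds $\tfrac12$). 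Applying H\"older's inequality $\|x\|_s\le n^{1/s-1/\tilde s}\|x\|_{\tilde s}$ in each slot of every decomposition $A=\sum x_i\otimes y_i\otimes z_i$ and then infimizing yields
\[
\|A\|_{\ell_p^n\otimes_\pi\ell_q^n\otimes_\pi\ell_r^n}\le n^{\frac1p+\frac1q+\frac1r-\frac12}\,\|A\|_{\ell_{\tilde p}^n\otimes_\pi\ell_{\tilde q}^n\otimes_\pi\ell_{\tilde r}^n},
\]
and Corollary \ref{cor multi} applied to the tilded space bounds the right-hand norm by $\|A\|_{\ell_2^{n^3}}$ up to a constant, producing the claimed $n^{1/p+1/q+1/r-1/2}$. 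This embedding-plus-Corollary \ref{cor multi} step is the one I expect to require the most care; everything else is mechanical.
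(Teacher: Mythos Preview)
Your proposal is correct and uses the same three ingredients as the paper's proof (Propositions~\ref{bound k is 2} and~\ref{prop recursive}, together with the H\"older embedding combined with Corollary~\ref{cor multi}). The only organizational difference is that for~\eqref{p less 2 less q sum small} you use the recursion where the paper uses the H\"older-plus-Corollary~\ref{cor multi} step, and conversely for the subcase $p\ge 2$ with $\tfrac1p+\tfrac1q+\tfrac1r\ge 1$ the paper reverts to the recursion whereas your H\"older argument already covers it---both orderings work and yours is slightly more streamlined.
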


\begin{proof}
The bounds~\eqref{r less 2},~\eqref{q less 2 less r},~\eqref{p less 2 less q sum big} follow from Proposition~\ref{bound k is 2} and Proposition~\ref{prop recursive}. Next, assume that $p,q,r$ are such that $\frac 1 2 \le \frac 1 p + \frac 1 q + \frac 1 r \le 1$. Then there exists $\tilde p \ge p$, $\tilde q \ge q$, $\tilde r \ge r$ such that $\frac 1 {\tilde p} + \frac 1 {\tilde q} + \frac 1 {\tilde r} = \frac 1 2$. Hence, given $A \in \X$, we have
\begin{align*}
\|A\|_{\pi} & \le n^{\frac 1 p + \frac 1 q + \frac 1 r -\frac 1 2}\|A\|_{\ell_{\tilde p}^n \otimes_{\pi} \ell_{\tilde q}^n \otimes_{\pi} \ell_{\tilde r}^n }
\\
& \le n^{\frac 1 p + \frac 1 q + \frac 1 r -\frac 1 2}\|\id\|_{\ell_2^{n^3}\to \ell_{\tilde p}^n \otimes_{\pi} \ell_{\tilde q}^n \otimes_{\pi} \ell_{\tilde r}^n}\|A\|_{\ell_2^{n^3}}\\
&  \lesssim n^{\frac 1 p + \frac 1 q + \frac 1 r -\frac 1 2}\|A\|_{\ell_2^{n^3}},
\end{align*}
where in the last inequality we used Corollary~\ref{cor multi}. Hence, we have
\begin{align}\label{case sum between 1/2 and 1}
\|\id\|_{\ell_2^{n^3}\to \X} \lesssim n^{\frac 1 p + \frac 1 q + \frac 1 r - \frac 1 2}.
\end{align}
Thus, if we assume that $p \le 2 \le q$ and $\frac 1 p + \frac 1 q + \frac 1 r \le 1$, then we must have $\frac 1 2 \le \frac 1 p + \frac 1 q + \frac 1 r \le 1$, in which case~\eqref{case sum between 1/2 and 1} proves~\eqref{p less 2 less q sum small}. Finally, assume that $p \ge 2$. If $\frac 1 2 \le \frac 1 p + \frac 1 q + \frac 1 r \le 1$, then~\eqref{case sum between 1/2 and 1} holds. If $\frac 1 p + \frac 1 q + \frac 1 r \le \frac 1 2$, then by Corollary~\ref{cor multi},
\begin{align}\label{case sum less 1/2}
\|\id\|_{\ell_2^{n^3}\to \X} \lesssim 1.
\end{align}
If we have $\frac 1 p + \frac 1 q + \frac 1 r \ge 1$, then since $p \ge 2$ we must have $\frac 1 q + \frac 1 r \ge \frac 1 2$. Hence, using Proposition~\ref{bound k is 2} and Proposition~\ref{prop recursive}, we have
\begin{align}\label{case sum bigger 1}
\|\id\|_{\ell_2^{n^3}\to \X} \lesssim  n^{\frac 1 p + \max\left(\frac 1 q + \frac 1 r, \frac 1 2\right) -\frac 1 2} = n^{\frac 1 p + \frac 1 q + \frac 1 r - \frac 1 2}.
\end{align}
Combining~\eqref{case sum between 1/2 and 1},~\eqref{case sum less 1/2}, and~\eqref{case sum bigger 1},~\eqref{p bigger 2} follows.
\end{proof}

\subsection{Improved lower bounds for some special cases}\label{sec special cases}

In this short section we show that the lower bound for the case $p \le q \le 2 \le r$ in Theorem \ref{main thm} can be improved for some particular choices of $p,q$ and $r$.

\begin{lem}\label{lem special cases}
Let $n\in\N$ and $1 \le p \le q \le 2 \le r \le \infty$. Then, we have
\begin{align*}
\| \id \|_{\ell_2^{n^3} \to \X} \ge \max\left(n^{\frac 1 2 + \frac 1 r}, n^{\min\left(\frac 1 q + \frac 1 r, 1\right) + \frac 1 p -1}, n^{\frac 1 q}\right).
\end{align*}
\end{lem}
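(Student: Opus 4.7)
The three quantities inside the maximum will be proved separately, since each comes from testing $\|\id\|_{\ell_2^{n^3}\to X^n_\pi}$ on a different carefully chosen tensor.

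The first bound, $n^{1/2+1/r}$, is immediate from Lemma \ref{lem lower bound norm}, which already yields
\[
\|\id\|_{\ell_2^{n^3}\to X^n_\pi} \gtrsim_{p,q,r} n^{\min(1/p,1/2) + \min(1/q,1/2) + 1/r - 1/2}.
\]
Under the hypothesis $p\le q\le 2$, both minima equal $1/2$, so the exponent collapses to $1/2+1/r$.

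For the remaining two bounds I would exploit associativity of the projective tensor norm together with the factorization $\|x\otimes y\|_{X\otimes_\pi Y} = \|x\|_X\|y\|_Y$ on elementary tensors, in order to pull back the $2$-fold estimates supplied by Proposition \ref{bound k is 2}. For the bound $n^{1/q}$, apply Proposition \ref{bound k is 2} to the pair $(p,q)$, which falls in the regime $p\le q\le 2$, to obtain a matrix $T'\in \mathbb R^n\otimes\mathbb R^n$ with $\|T'\|_{\ell_p^n\otimes_\pi\ell_q^n} \gtrsim_{p,q} n^{1/q}\|T'\|_{\ell_2^{n^2}}$. Writing $X^n_\pi = (\ell_p^n\otimes_\pi\ell_q^n)\otimes_\pi\ell_r^n$ and setting $T := T'\otimes e_1$, the elementary-tensor identity gives $\|T\|_\pi = \|T'\|_{\ell_p^n\otimes_\pi\ell_q^n}$ and $\|T\|_{\ell_2^{n^3}} = \|T'\|_{\ell_2^{n^2}}$, so the ratio $\|T\|_\pi/\|T\|_{\ell_2^{n^3}}$ is of the desired order $n^{1/q}$.

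Symmetrically, for the bound $n^{\min(1/q+1/r,1)+1/p-1}$, apply Proposition \ref{bound k is 2} to the pair $(q,r)$, which falls in the regime $q\le 2\le r$, producing $M\in\mathbb R^n\otimes\mathbb R^n$ with $\|M\|_{\ell_q^n\otimes_\pi\ell_r^n} \gtrsim_{q,r} n^{\min(1/q+1/r,1)-1/2}\|M\|_{\ell_2^{n^2}}$. Let $x := (1,\dots,1)\in\mathbb R^n$ and view $X^n_\pi = \ell_p^n\otimes_\pi(\ell_q^n\otimes_\pi\ell_r^n)$. For the elementary tensor $T := x\otimes M$, the factorization yields $\|T\|_\pi = \|x\|_p\|M\|_{\ell_q^n\otimes_\pi\ell_r^n}$ and $\|T\|_{\ell_2^{n^3}} = \|x\|_2\|M\|_{\ell_2^{n^2}}$, and the extra factor $\|x\|_p/\|x\|_2 = n^{1/p-1/2}$ then delivers precisely the exponent $\min(1/q+1/r,1)+1/p-1$.

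No step is a serious obstacle: all the analytic content is already contained in Proposition \ref{bound k is 2} and Lemma \ref{lem lower bound norm}. The only conceptual point is to recognize that one should test the operator norm on rank-one tensors in which one factor is an extremal vector in $\ell_p^n$ or $\ell_r^n$, thereby reducing the $3$-fold problem to the known $2$-fold estimates.
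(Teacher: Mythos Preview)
Your proposal is correct and follows essentially the same approach as the paper: the first bound is read off from Lemma~\ref{lem lower bound norm}, while the other two are obtained by testing the identity on rank-one tensors of the form $x\otimes B$ (with $x=(1,\dots,1)\in\ell_p^n$ and $B\in\ell_q^n\otimes_\pi\ell_r^n$, respectively $x=e_1\in\ell_r^n$ and $B\in\ell_p^n\otimes_\pi\ell_q^n$) and invoking Proposition~\ref{bound k is 2}. The only cosmetic difference is that the paper phrases the reduction as taking the supremum over all $B$, which recovers $\|\id\|_{\ell_2^{n^2}\to\ell_q^n\otimes_\pi\ell_r^n}$ exactly, whereas you pick a near-extremal $B$; the content is the same.
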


\begin{proof}
The first bound follows from Lemma~\ref{lem lower bound norm}. In order to prove the second bound, we transfer to a $3$-fold tensor that is constructed from a $2$-fold tensor $B \in \ell_{q}^n \otimes_{\pi} \ell_{r}^n$ and the special vector $x = (1,1,\dots,1) \in \ell_{p}^n$. We have
\begin{align*}
\| \id \|_{\ell_2^{n^3} \to \X} & \ge \sup_{B \in \ell_{q}^n \otimes_{\pi} \ell_{r}^n} \frac{\|x\otimes B\|_{\pi}}{\|x\otimes B\|_{\ell_2^{n^3}}} \\
& =  \sup_{B \in \ell_{q}^n \otimes_{\pi} \ell_{r}^n} \frac{\|B\|_{\ell_{q}^n \otimes_{\pi} \ell_{r}^n}\|x\|_{\ell_r^n}}{\|B\|_{\ell_2^{n^2}}\|x\|_{\ell_2^n}} \\
& = n^{\frac 1 p -\frac 1 2}\|\id\|_{\ell_2^{n^2} \to \ell_{q}^n \otimes_{\pi} \ell_{r}^n} \\
& =  n^{\min\left(\frac 1 q + \frac 1 r, 1\right) + \frac 1 p -1},
\end{align*}
where in the last equality we used Proposition~\ref{bound k is 2}. To obtain the third bound, we again transfer to the $2$-fold case, choosing $B \in \ell_p^n\otimes_{\pi} \ell_q^n$ and $x = (1,0,\dots,0)\in\ell_r^n$. Then we have
\begin{align*}
\big\| \id \|_{\ell_2^{n^3} \to \X} \ge \sup_{B \in \ell_{p}^n \otimes_{\pi} \ell_{q}^n} \frac{\|x\otimes B\|_{\pi}}{\|x\otimes B\|_{\ell_2^{n^3}}} =  \sup_{B \in \ell_{p}^n \otimes_{\pi} \ell_{q}^n} \frac{\|B\|_{\ell_{p}^n \otimes_{\pi} \ell_{q}^n}\|x\|_{\ell_r^n}}{\|B\|_{\ell_2^{n^2}}\|x\|_{\ell_2^n}} = \|\id\|_{\ell_2^{n^2} \to \ell_{p}^n \otimes_{\pi} \ell_{q}^n} =  n^{\frac 1 q},
\end{align*}
where again in the last equality we used Proposition~\ref{bound k is 2}. This completes the proof.
\end{proof}

Combining Lemma~\ref{lem special cases} with Lemma~\ref{lem upper bound norm}, we obtain the following improvements on the bounds in particular cases.

\begin{cor}\label{cor:improvement open case}
(i) If $p=1 \le q \le 2 \le r$, then
\begin{align*}
\|\id\|_{\ell_2^{n^3}\to \X} \asymp_{q,r} n^{\min\left(\frac 1 q + \frac 1 r,1\right)}.
\end{align*}
In particular, in such case we have
\begin{align*}
\vr\left(\ell_p^n \otimes_{\pi} \ell_q^n \otimes_{\pi} \ell_r^n\right) \asymp_{q,r}  n^{\min\left(\frac 1 q - \frac 1 2, \frac 1 2 - \frac 1 r\right)}.
\end{align*}
(ii) If $p=q \le 2$ and $r=\infty$, then
\begin{align*}
\|\id\|_{\ell_2^{n^3}\to \X} \asymp_{p} n^{\frac 1 q}.
\end{align*}
In particular, in such case we have
\begin{align*}
\vr\left(\ell_p^n \otimes_{\pi} \ell_q^n \otimes_{\pi} \ell_r^n\right) \asymp_{p} n^{\frac 1 q - \frac 1 2}.
\end{align*}
(iii) If $1\leq p\leq q = 2 \leq r$, then
\begin{align*}
\|\id\|_{\ell_2^{n^3}\to \X} \asymp_{p,r} n^{\frac 1 2 + \frac 1 r}.
\end{align*}
In particular, in such case we have
\begin{align*}
\vr\left(\ell_p^n \otimes_{\pi} \ell_q^n \otimes_{\pi} \ell_r^n\right) \asymp_{p,r} 1.
\end{align*}
\end{cor}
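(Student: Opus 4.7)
\bigskip
\noindent\textbf{Proof proposal for Corollary~\ref{cor:improvement open case}.}
The plan is to observe that in each of the three regimes, the case-specific lower bound from Lemma~\ref{lem special cases} exactly matches the general upper bound~\eqref{q less 2 less r} in Lemma~\ref{lem upper bound norm}, and then to convert the resulting sharp estimate for $\|\id\|_{\ell_2^{n^3}\to X_\pi^n}$ into a volume ratio estimate through the formula~\eqref{formula vol ell} combined with Theorem~\ref{thm:volume unit balls projective}. Since $X_\pi^n$ has enough symmetries, no additional machinery is required; the whole argument is an assembly of previously proved ingredients.

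First I would verify the matching of bounds on the identity norm. For part (i), with $p=1$ the second term in Lemma~\ref{lem special cases} becomes $n^{\min(1/q+1/r,\,1)+1/p-1} = n^{\min(1/q+1/r,\,1)}$, which coincides precisely with~\eqref{q less 2 less r}. For part (ii), substituting $r=\infty$ in~\eqref{q less 2 less r} yields $n^{\min(1/q,\,1)} = n^{1/q}$ (since $q\ge 1$), which matches the third term in Lemma~\ref{lem special cases}. For part (iii), setting $q=2$ in~\eqref{q less 2 less r} gives $n^{\min(1/2+1/r,\,1)} = n^{1/2+1/r}$ (using $r\ge 2$, so $1/r\le 1/2$), which matches the first term in Lemma~\ref{lem special cases}. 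In each case this pins down $\|\id\|_{\ell_2^{n^3}\to X_\pi^n}$ up to constants depending only on the parameters.

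Next I would deduce the volume ratio. Because $X_\pi^n$ has enough symmetries, formula~\eqref{formula vol ell} applied in dimension $N=n^3$ reads
\begin{align*}
\vr(X_\pi^n) \asymp n^{3/2}\,\vol_{n^3}(B_\pi^n)^{1/n^3}\,\|\id\|_{\ell_2^{n^3}\to X_\pi^n}.
\end{align*}
In all three cases we have $p,q\le 2$, so $\min(1/p,1/2)=\min(1/q,1/2)=1/2$, and Theorem~\ref{thm:volume unit balls projective} yields $\vol_{n^3}(B_\pi^n)^{1/n^3}\asymp_{p,q,r} n^{-2-1/r}$. Combining, $\vr(X_\pi^n)\asymp_{p,q,r} n^{-1/2-1/r}\,\|\id\|_{\ell_2^{n^3}\to X_\pi^n}$. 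Plugging in the sharp identity norms: (i) gives $n^{\min(1/q+1/r,\,1)-1/2-1/r} = n^{\min(1/q-1/2,\,1/2-1/r)}$, where the last equality comes from splitting on whether $1/q+1/r\le 1$ or $\ge 1$; (ii) gives $n^{1/q-1/2}$; (iii) gives $n^{1/2+1/r-1/2-1/r}=1$.

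There is no real obstacle here beyond bookkeeping of exponents; the only point requiring brief verification is that in case (iii) the identity $\min(1/2+1/r,1)=1/2+1/r$ holds precisely because $r\ge 2$, and that in case (i) the splitting $\min(1/q+1/r,1)$ interacts with $-1/2-1/r$ exactly to produce the symmetric expression $\min(1/q-1/2,\,1/2-1/r)$.
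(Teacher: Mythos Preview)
Your proposal is correct and follows exactly the route the paper indicates: the corollary is stated as a direct combination of Lemma~\ref{lem special cases} with the upper bound~\eqref{q less 2 less r} of Lemma~\ref{lem upper bound norm}, and your case-by-case verification that the relevant term of Lemma~\ref{lem special cases} matches~\eqref{q less 2 less r} is precisely what is needed. The conversion to volume ratio via formula~\eqref{formula vol ell} and Theorem~\ref{thm:volume unit balls projective} is likewise the intended argument, and your exponent bookkeeping is accurate.
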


Corollary~\ref{cor:improvement open case} suggests that it is the lower bound that should be improved. See also Corollary~\ref{cor all p same} below for another such indication.

\medskip
\section{The case of $k$-fold projective tensor products}\label{sec k fold}

In this section, we briefly present the generalization of our main result to the case of $k$-fold projective tensor products. Before stating the generalized main result, fix some notation. Given $k\in\N$, $1\le p_1\le p_2\le \dots \le p_k \le \infty$ and $n\in \mathbb N$, let
\begin{align}\label{def k fold}
X_{\p}^n := \ell_{p_1}^n\otimes_{\pi}\dots\otimes_{\pi}\ell_{p_k}^n.
\end{align}
Also, let $j_0$ be the largest $1 \le j \le k$ such that $p_j \le 2$ ($j_0 = 0$ if $p_1>2$). The space $X_{\p}^n$ has enough symmetries, and so formula \eqref{formula vol ell} gives
\begin{align}\label{formula vr 2}
\vr(X_{\p}^n) \asymp n^{k/2}\left(\vol_{n^k}(B_{X_{\p}^n})\right)^{1/n^k} \big\|\id\big\|_{\ell_2^n\to X_{\p}^n}.
\end{align}
In what follows, $\gtrsim_{\p}$ and $\lesssim_{\p}$ mean inequalities with an implied constant that depends only on $p_1,\dots,p_k$. All the tools that were used above can be generalized to the case of $k$-fold tensor products by straightforward induction.

Regarding the volume of $B_{X_{\p}^n}$, note that in the $k$-fold version of Chevet's inequality (see Lemma~\ref{cor chevet k-fold} below for a complete statement), there is an additional $k$ factor since the upper bound is now comprised of $k$ terms, and also note that the lower bound of the volume of $B_{X_{\p}}^n$ now contains a factor of $2^{-\frac{k-j_0}{2}}$, since for each $j$ with $p_j \ge 2$, the use of Khinchine's inequality incurs a factor of $1/\sqrt{2}$. As a result, we have in the general case,
\begin{align*}
2^{-\frac{k-j_0}{2}}\, n^{-\sum_{j=1}^{k-1}\min\left(\frac 1{p_j}, \frac 1 2\right) -\frac 1 {p_k}-1} \lesssim_{\p} \vol_{n^k}(B_{X_{\p}^n}) \lesssim_{\p} k\, n^{-\sum_{j=1}^{k-1}\min\left(\frac 1{p_j}, \frac 1 2\right) -\frac 1 {p_k}-1}.
\end{align*}

Regarding the norm of the identity, an analogue of Theorem~\ref{thm norm iden} follows by simply using induction. The only exception is Theorem~\ref{thm multilinear}, which in the general case still gives $\|\id\|_{\ell_2^{n^k} \to X_{\p}^n} \lesssim 1$ whenever $\sum_{j=1}^k \frac{1}{p_j} \le \frac 1 2$.

Combining those tools, Theorem~\ref{main thm} can be generalizes in the following way:

\begin{thmalpha}\label{thm: k fold}
        Let $k\ge 3$. Let $1\le p_1\le p_2\le \dots \le p_k \le \infty$ and $n\in \mathbb N$. Then the following estimates hold:
        \begin{enumerate}
                \setlength{\itemsep 5pt}
                \item If $p_k \le 2$, then
                \begin{align*}
                1 \lesssim_{\p} \vr(\X) \lesssim_{\p} k.
                \end{align*}
                \item If $\sum_{j=1}^k\frac 1 {p_j} \ge 1$, $p_{k-1}\le 2 < p_k$, then
                \begin{align*}
                1  \lesssim_{\p} \vr(\X) \lesssim_{\p} k\, n^{\min\left(\frac 1 {p_{k-1}}+\frac 1 {p_k},1\right)-\frac 1{p_{k}}-\frac 1 2}.
                \end{align*}
                \item If $\sum_{j=1}^k\frac 1 {p_j} \ge 1$, $p_{k-1}>2$, then
                \begin{align*}
                1  \lesssim_{\p} \vr(\X) \lesssim_{\p} k\, n^{\max\left(\frac 1 {p_{k-1}}+\frac 1 {p_k},\frac 1 2\right)-\frac 1{p_{k-1}}-\frac 1 {p_k}}
                \end{align*}
                \item If $\sum_{j=1}^k\frac 1 {p_j} \ge 1$,  and If $\sum_{j=2}^k\frac 1 {p_j} \le \frac 1 2$, then
                \begin{align*}
                2^{-\frac k 2} n^{\frac 1 2  -\sum_{j=2}^k\frac 1 {p_j}} \lesssim_{\p} \vr(\X) \lesssim_{\p} k\, n^{\frac 1 2  -\sum_{j=2}^k\frac 1 {p_j}}
                \end{align*}
                \item If $\frac 1 2 \le \sum_{j=1}^k\frac 1 {p_j} \le 1$, then
                \begin{align*}
                2^{-\frac k 2} n^{\max\left(\frac 1 {p_1}-\frac 1 2,0\right)}  \lesssim_{\p} \vr(\X) \lesssim_{\p} k \, n^{\max\left(\frac 1 {p_1}-\frac 1 2,0\right)}.
                \end{align*}
                \item If $\sum_{j=1}^k\frac 1 {p_j} \le \frac 1 2$, then
                \begin{align*}
                2^{-\frac k 2}  n^{\frac 1 2 -\sum_{j=1}^k\frac 1 {p_j}} \lesssim_{\p} \vr(\X) \lesssim_{\p} k\, n^{\frac 1 2-\sum_{j=1}^k\frac 1 {p_j}}.
                \end{align*}
        \end{enumerate}
\end{thmalpha}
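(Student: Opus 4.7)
The plan is to follow exactly the same three-step strategy as in the $3$-fold case, but replacing every ingredient with its $k$-fold analogue. The starting point is the fact that $X_{\p}^n$ has enough symmetries (being a projective tensor product of $\ell_p$ spaces), so formula \eqref{formula vr 2} applies and reduces everything to estimating $\vol_{n^k}(B_{X_{\p}^n})^{1/n^k}$ and $\|\id:\ell_2^{n^k}\to X_{\p}^n\|$.

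First I would establish the volume estimates displayed in the excerpt just before the theorem. For the lower bound on $\vol_{n^k}(B_{X_{\p}^n})$, I would generalize Proposition~\ref{lem l1 norm}: iterating the argument that alternates between testing against the constant sign vector (in coordinates where $p_j \ge 2$) and the standard basis (in coordinates where $p_j \le 2$), then applying Khintchine in the $k-j_0$ ``sign'' coordinates to produce an $\ell_1$ lower bound on $\|A\|_{\epsilon}$. Each Khintchine step contributes a factor of $1/\sqrt 2$, which accounts precisely for the $2^{-(k-j_0)/2}$ factor. Passing to polar bodies gives $B_{X_{\p}^n} \supseteq c \cdot n^{-\sum_{j<k}\min(1/p_j,1/2) - 1/p_k - 1} B_{\infty}^{n^k}$, hence the lower bound on the volume radius. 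For the upper bound, I would state a $k$-fold Chevet inequality
\begin{equation*}
\E\,\Bigl\|\sum_{i_1,\dots,i_k} \e_{i_1,\dots,i_k}\, e_{i_1}\otimes\cdots\otimes e_{i_k}\Bigr\|_{\epsilon} \lesssim k\cdot n^{\max(1/p_k^*,1/2) + \sum_{j<k}\max(1/p_j^*,1/2) - (k-1)/2},
\end{equation*}
proved by induction (or by applying the $2$-fold Chevet to the outermost tensor and bounding the resulting injective norm of a $(k-1)$-fold tensor by the induction hypothesis). Combining this with Lemma~\ref{LEM_volume_unit_balls} and the Blaschke--Santal\'o inequality yields the matching upper bound (with the extra factor of $k$).

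Next I would estimate $\|\id\|_{\ell_2^{n^k}\to X_{\p}^n}$ by induction on $k$. The recursive inequality of Proposition~\ref{prop recursive} extends directly: peeling off the coordinate $\ell_{p_k}^n$ gives
\begin{equation*}
\|\id\|_{\ell_2^{n^k}\to X_{\p}^n} \le n^{\min(1/p_k,1/2)} \cdot \|\id\|_{\ell_2^{n^{k-1}}\to \ell_{p_1}^n\otimes_{\pi}\cdots\otimes_{\pi}\ell_{p_{k-1}}^n}.
\end{equation*}
Iterating and using the base case (Proposition~\ref{bound k is 2}) produces the required upper bounds in each of the six regimes, while the case $\sum_j 1/p_j \le 1/2$ is absorbed by Theorem~\ref{thm multilinear} which gives $\|\id\|_{\ell_2^{n^k}\to X_{\p}^n}\lesssim 1$ directly. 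For intermediate regimes ($1/2 \le \sum_j 1/p_j \le 1$) I would embed $X_{\p}^n$ isomorphically into a tensor product with ``larger'' exponents $\tilde p_j\ge p_j$ satisfying $\sum_j 1/\tilde p_j = 1/2$, paying a factor $n^{\sum_j(1/p_j-1/\tilde p_j)}$, exactly as in the proof of Lemma~\ref{lem upper bound norm}. The matching lower bounds follow from the two universal lower bounds in Lemma~\ref{lem lower bound norm}: the diagonal-tensor lower bound $n^{\max(\min(1,\sum 1/p_j)-1/2,0)}$ (via the Arias--Farmer identification of diagonal tensors with $\ell_s^n$) and the Chevet-based lower bound $n^{\sum_{j<k}\min(1/p_j,1/2)+1/p_k-1/2}$.

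Finally, I would plug both estimates into \eqref{formula vr 2} and check that each of the six cases in the theorem corresponds to the appropriate combination. The main obstacle, as in the three-fold case, is the regime $p_{k-1}\le 2\le p_k$ (cases (2) and (4) above), where neither the diagonal bound nor the Chevet bound is tight and the identity norm is only known up to the $n^{\min(1/p_{k-1}+1/p_k,1)-\cdots}$ gap inherited from the two-fold base case; this is why cases (2) and (4) are stated with only an upper bound and the trivial lower bound $\vr \ge 1$ (respectively the Chevet lower bound $n^{1/2-\sum_{j\ge 2}1/p_j}$). Tracking the constants carefully, the factor $k$ in the upper bound originates from Chevet's inequality (sum of $k$ terms) and the factor $2^{-k/2}$ in the lower bound originates from the $k$ Khintchine applications used in bounding $\|A\|_{\epsilon}$ from below by $\|A\|_1$ up to a constant.
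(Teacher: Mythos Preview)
Your proposal is correct and follows essentially the same approach as the paper, which itself only sketches the argument: generalize the volume bounds via the $k$-fold Chevet inequality (Lemma~\ref{cor chevet k-fold}) and the iterated Khintchine/$\ell_1$ comparison, generalize the identity-norm bounds by the recursive peeling of Proposition~\ref{prop recursive} together with the multilinear Hardy--Littlewood inequality, and then combine via \eqref{formula vr 2}. One small imprecision: the number of Khintchine applications is $k-j_0$ rather than $k$ (only the coordinates with $p_j\ge 2$ are averaged over signs), which is why the paper records the sharper constant $2^{-(k-j_0)/2}$ before the theorem, though the stated bound $2^{-k/2}$ is of course weaker and therefore still valid.
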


Considering copies of the same $\ell_p^n$ space, it was shown in \cite{DP09}, that if one considers the space
\begin{align*}
\otimes_{\pi}^k\ell_p^n := \underbrace{\ell_p^n\otimes_{\pi}\ell_p^n\otimes_{\pi}\dots\otimes_{\pi}\ell_p^n}_\text{$k$ times},
\end{align*}
then the following holds true:
\begin{align}\label{thm all p same}
\vr\left(\otimes_{\pi}^k\ell_p^n\right) \asymp_p \begin{cases} 1 & p \le 2k, \\ n^{\frac 1 2 - \frac k p} & p \ge 2k. \end{cases}
\end{align}

Using Theorem \ref{thm: k fold}, we can get a result in the spirit of \eqref{thm all p same}, with worse dependence on $k$, and in some case, with worse dependence on $n$ as well:

\begin{cor}\label{cor all p same}
        The following holds:
        \begin{enumerate}
                \item If $p\le 4$ or $k\le p \le 2k$ then $1 \lesssim_{p} \vr \left(\otimes_{\pi}^k\ell_p^n\right) \lesssim_{p} k$.
                \item If $4 \le p \le k $ then $1 \lesssim_{p} \vr \left(\otimes_{\pi}^k\ell_p^n\right) \lesssim_{p} k\, n^{\frac 1 2 -\frac 2 p}$.
                \item If $p \ge 2k$ then $2^{-\frac k 2}\, n^{\frac 1 2 -\frac k p} \lesssim_{p} \vr \left(\otimes_{\pi}^k\ell_p^n\right) \lesssim_{p} k\, n^{\frac 1 2-\frac k {p}}$.
        \end{enumerate}
\end{cor}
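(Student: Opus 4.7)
The plan is to specialize Theorem~\ref{thm: k fold} to the situation $p_1 = p_2 = \dots = p_k = p$ and then chase which of the six regimes (1)--(6) of that theorem is active for each of the three ranges of $p$ stated in the corollary. In the equal-exponent case we have $\sum_{j=1}^k \frac{1}{p_j} = \frac{k}{p}$, $\sum_{j=2}^k \frac{1}{p_j} = \frac{k-1}{p}$, and $p_{k-1} = p_k = p$, so several of the mixed regimes simplify or drop out entirely. In all cases the trivial lower bound $\vr(X) \ge 1$ coming from the definition takes care of the left-hand inequality in parts (1) and (2).

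First I would handle the cheap regimes. Regime (1) of Theorem~\ref{thm: k fold} applies exactly when $p \le 2$ and directly gives $\vr \lesssim_p k$. Regime (2) requires $p_{k-1} \le 2 < p_k$, which is impossible when all factors coincide, so it never contributes. Regime (4) requires $\frac{k-1}{p} \le \frac{1}{2}$ and $\frac{k}{p} \ge 1$ simultaneously, i.e.\ $2(k-1) \le p \le k$, which is vacuous for $k \ge 3$. This leaves regimes (3), (5), and (6) to do the real work. Regime (5) is active for $k \le p \le 2k$, and since $p \ge k \ge 3 > 2$ the exponent $\max(\frac{1}{p}-\frac{1}{2},0)$ vanishes, yielding $\vr \lesssim_p k$; together with the $p\le 2$ part this establishes the $k \le p \le 2k$ branch of item~(1).

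Next I would tackle the intermediate regime, where $2 < p \le k$ or $2 < p \le 2k$ with $p \le 4$. If $2 < p \le k$ we are in regime~(3), whose upper bound specializes to $k \cdot n^{\max(2/p,\,1/2) - 2/p}$. When $p \le 4$ the maximum is $2/p$ and the exponent collapses to $0$, giving $\vr \lesssim_p k$; when $p \ge 4$ the exponent equals $\frac{1}{2}-\frac{2}{p}$, producing the bound in item~(2). A small subtlety is what happens for $2 < p \le 4$ with $p > k$ (possible only when $k=3$ and $p\in(3,4]$): here $\frac{k}{p} < 1$ so we fall into regime (5) instead, but since $p \in (k, 2k]$ that regime again gives $\vr \lesssim_p k$, so the $p \le 4$ conclusion in item~(1) remains intact.

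Finally, for $p \ge 2k$ we have $\frac{k}{p} \le \frac{1}{2}$, so regime (6) of Theorem~\ref{thm: k fold} applies and yields both sides of item~(3) at once, namely
\[
2^{-k/2}\, n^{\frac{1}{2} - \frac{k}{p}} \lesssim_p \vr\bigl(\otimes_\pi^k \ell_p^n\bigr) \lesssim_p k\, n^{\frac{1}{2} - \frac{k}{p}}.
\]
I do not expect a serious obstacle here; the work is essentially a careful case split, with the only point demanding attention being the boundary cases (in particular checking that regime~(4) is empty for $k \ge 3$ and that the small-$k$ corner $k=3$, $3<p\le 4$ is correctly absorbed into regime~(5) rather than left uncovered). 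Once the case analysis is complete, reading off the stated asymptotics from the corresponding line of Theorem~\ref{thm: k fold} finishes the proof.
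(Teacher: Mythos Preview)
Your proposal is correct and follows exactly the route the paper indicates: the corollary is stated without proof as an immediate specialization of Theorem~\ref{thm: k fold}, and your case analysis (regimes (1), (3), (5), (6), with (2) and (4) vacuous when all $p_j$ coincide and $k\ge 3$) is precisely what is needed to fill in the omitted details.
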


Corollary~\ref{cor all p same} combined with~\eqref{thm all p same} suggest that it is the lower bound that should be improved in Theorem~\ref{thm: k fold}. Finally, for the sake of completeness, let us state the $k$-fold version of Chevet's inequality that plays a major role in the proof of Theorem \ref{thm: k fold}:

\begin{lem}\label{cor chevet k-fold}
        Let $k\in\N$ and $\{x_{i_j}\}_{i_j=1}^{n_j} \subseteq X_j$, $1 \le j \le k$ be sequences in the Banach spaces $(X_1,\|\cdot\|_{X_1}),\dots,(X_k, \|\cdot\|_{X_k})$, respectively. Then,
        \begin{align*}
        \left\|\sum_{\substack{1\le i_m \le n \\ 1 \le m \le k}}g_{i_1,\dots,i_k}x_{i_1}\otimes\dots\otimes x_{i_k}\right\|_{X_1\otimes_{\epsilon}\dots\otimes_{\epsilon}X_k} \le \sum_{j=1}^k \left[\prod_{j' \neq j} \big\|\{x_{i_j}\}_{i_j}\big\|_{\omega,2} \right]\mathbb E\bigg\|\sum_{i_j=1}^{n}g_{i_j}x_{i_j}\bigg\|_{X_j}.
        \end{align*}
\end{lem}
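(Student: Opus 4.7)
The plan is to proceed by induction on $k$, with base case $k = 2$ being the 2-fold Chevet inequality recalled above. For the inductive step I collapse the first $k - 1$ coordinates into a single factor, apply the 2-fold inequality, and then invoke the induction hypothesis on the remaining $(k-1)$-fold object.

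Concretely, assume the result for $k - 1$, and set $Y := X_1 \otimes_\e \cdots \otimes_\e X_{k-1}$. For each multi-index $\mathbf{i}' = (i_1, \ldots, i_{k-1})$, define the elementary tensor $y_{\mathbf{i}'} := x_{i_1} \otimes \cdots \otimes x_{i_{k-1}} \in Y$ and reindex the Gaussians as $g_{\mathbf{i}', i_k}$. Viewing $X_1 \otimes_\e \cdots \otimes_\e X_k$ as $Y \otimes_\e X_k$ and applying the 2-fold Chevet inequality (with fresh independent standard Gaussians $\rho_{i_k}$ and $h_{\mathbf{i}'}$) gives
\begin{align*}
\E\bigg\|\sum_{\mathbf{i}', i_k} g_{\mathbf{i}', i_k}\, y_{\mathbf{i}'} \otimes x_{i_k}\bigg\|_{Y \otimes_\e X_k} \le \big\|\{y_{\mathbf{i}'}\}_{\mathbf{i}'}\big\|_{\omega,2}\, \E\bigg\|\sum_{i_k} \rho_{i_k} x_{i_k}\bigg\|_{X_k} + \big\|\{x_{i_k}\}_{i_k}\big\|_{\omega,2}\, \E\bigg\|\sum_{\mathbf{i}'} h_{\mathbf{i}'}\, y_{\mathbf{i}'}\bigg\|_Y.
\end{align*}

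Two observations then finish the proof. First, iterating \eqref{weak norm prod} yields $\big\|\{y_{\mathbf{i}'}\}_{\mathbf{i}'}\big\|_{\omega,2} = \prod_{j=1}^{k-1} \big\|\{x_{i_j}\}_{i_j}\big\|_{\omega,2}$, so the first summand above is exactly the $j = k$ term of the claimed bound. Second, applying the induction hypothesis to $\E\big\|\sum_{\mathbf{i}'} h_{\mathbf{i}'} y_{\mathbf{i}'}\big\|_Y$ (a $(k-1)$-fold Chevet situation for $X_1, \ldots, X_{k-1}$) produces the remaining $k - 1$ summands, each of which, after being multiplied by the prefactor $\big\|\{x_{i_k}\}_{i_k}\big\|_{\omega,2}$, reassembles into the $j = 1, \ldots, k - 1$ terms of the claimed inequality.

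I do not anticipate any serious obstacle. The only point meriting brief care is the iterated form of \eqref{weak norm prod}: this follows because in finite dimensions the unit ball of $(X_1 \otimes_\e \cdots \otimes_\e X_{k-1})^*$ is the closed absolutely convex hull of elementary tensors of dual unit vectors (itself established by induction from the 2-fold case used to prove \eqref{weak norm prod}), so the convex supremum defining $\big\|\{y_{\mathbf{i}'}\}_{\mathbf{i}'}\big\|_{\omega,2}$ is attained at elementary tensor functionals and decouples into a product of the individual weak-$2$ norms.
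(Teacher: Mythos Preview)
Your proposal is correct and matches the approach the paper itself indicates: the paper does not give an explicit proof of this lemma but states that all the relevant tools ``can be generalized to the case of $k$-fold tensor products by straightforward induction,'' and for $k=3$ (Lemma~\ref{cor chevet}) simply says it follows from the $2$-fold Chevet inequality~\eqref{ineq chevet 2} together with the factorization~\eqref{weak norm prod}. Your inductive argument---collapsing the first $k-1$ factors, applying the $2$-fold inequality, and invoking both the iterated version of~\eqref{weak norm prod} and the induction hypothesis---is exactly this route made explicit.
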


\medskip
\section{Remarks on the cotype of projective tensor products}\label{sec applications}

As already mentioned in the introduction, from our main result and its $k$-fold generalization, we obtain information on the cotype of the $3$-fold and $k$-fold projective tensor products.
  
Consider now the infinite dimensional space 
\[X := \ell_p\otimes_{\pi}\ell_q\otimes_{\pi}\ell_r,\] 
where $1\le p \le q \le r \le \infty$. Since the space $\X = \ell_p^n \otimes_{\pi}\ell_q \otimes_{\pi}\ell_r^n$ is a finite dimensional subspace of $X$, it follows by Theorem~\ref{thm BM} that if
\begin{align*}
\lim_{n\to \infty}\vr\left(\ell_p^n \otimes_{\pi}\ell_q \otimes_{\pi}\ell_r^n\right) = \infty,
\end{align*}
then $X$ does not have cotype 2, that is $C_2(X) = \infty$. In order to obtain a result about cotype $\alpha>2$, recall the following result from~\cite{TJ79}: if $(X,\|\cdot\|_X)$ is $n$-dimensional, then there exist $x_1,\dots,x_n \in X$ such that
\begin{align*}
\Big(\sum_{i=1}^n\|x_i\|_X^2\Big)^{1/2} \gtrsim C_2(X)\mathbb E\,\Big\|\sum_{i=1}^n\e_ix_i\Big\|_X.
\end{align*}
In particular, in the space $\X = \ell_p^n \otimes_{\pi}\ell_q^n\otimes_{\pi}\ell_r^n$, there exist $x_1,\dots,x_{n^3} \in \X$ such that
\begin{multline*}
C_2(\X)\,\mathbb E\,\Big\|\sum_{i=1}^{n^3}\e_ix_i\Big\|_{\pi} \lesssim \Big(\sum_{i=1}^{n^3}\|x_i\|_\pi^2\Big)^{1/2}
\\ \stackrel{(*)}{\le} n^{3\left(\frac 1 2 - \frac 1 \alpha\right)}\Big(\sum_{i=1}^{n^3}\|x_i\|_\pi^\alpha\Big)^{1/\alpha} \le n^{3\left(\frac 1 2 - \frac 1 \alpha\right)}C_{\alpha}(\X)\,\mathbb E\,\Big\|\sum_{i=1}^{n^3}\e_ix_i\Big\|_{\pi},
\end{multline*}
where in ($*$) we used H\"older's inequality. Altogether, we have $C_\alpha(\X) \gtrsim n^{3\left(\frac 1 \alpha - \frac 1 2\right)}C_2(\X)$. This fact, together with Theorem \ref{main thm}, gives the following result.

\begin{cor}\label{cor cotype}
        Let $X = \ell_p\otimes_{\pi}\ell_q\otimes_{\pi}\ell_r$ with $1\le p \le q \le r \le \infty$. Then, $C_{\alpha}(X) = \infty$ in the following cases:
        \renewcommand\labelitemi{\tiny$\bullet$}
        \begin{itemize}
                \item $p \le 2$, $\frac 1 q + \frac 1 r < \frac 1 2$, and $\frac 1 p + \frac 1 q + \frac 1 r \ge 1$, and for $\alpha < \frac{3}{1+\frac 1 q + \frac 1 r}$\,;
                \item $p < 2$, $\frac 1 p + \frac 1 q + \frac 1 r \le 1$, and for $\alpha < \frac{3}{2 - \frac 1 p}$\,;
                \item $\frac 1 p + \frac 1 q + \frac 1 r < \frac 1 2$, and for $\alpha < \frac 3 {1+ \frac 1 p + \frac 1 q + \frac 1 r}$\,.
        \end{itemize}
\end{cor}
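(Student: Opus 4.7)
The plan is to bootstrap the polynomial lower bounds on $\vr(\X)$ supplied by Theorem~\ref{main thm} into statements of infinite cotype by chaining together two ingredients already present in the excerpt. First, since $\X$ is a finite-dimensional subspace of $X=\ell_p\otimes_\pi\ell_q\otimes_\pi\ell_r$, one has $C_\alpha(X)\ge C_\alpha(\X)$ for every $n$, so it suffices to show that $C_\alpha(\X)$ grows without bound in $n$. Second, the Bourgain--Milman bound (Theorem~\ref{thm BM}) inverts to $C_2(\X)\gtrsim \vr(\X)/\log(2\vr(\X))$, while the cotype comparison derived in the preamble to the statement,
\[
C_\alpha(\X)\;\gtrsim\; n^{3(1/\alpha - 1/2)}\,C_2(\X)
\]
(a consequence of the near-tightness of cotype-$2$ from~\cite{TJ79} combined with H\"older's inequality on $\R^{n^3}$), converts a cotype-$2$ lower bound into a cotype-$\alpha$ lower bound. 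Chaining the two yields
\[
C_\alpha(X)\;\ge\;C_\alpha(\X)\;\gtrsim\;\frac{n^{3(1/\alpha - 1/2)}\vr(\X)}{\log(2\vr(\X))}.
\]

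Thus, whenever Theorem~\ref{main thm} furnishes $\vr(\X)\gtrsim_{p,q,r} n^{\beta}$ with $\beta=\beta(p,q,r)>0$, one obtains $C_\alpha(\X)\gtrsim n^{3/\alpha - 3/2 + \beta}/\log n$, and the right-hand side tends to infinity exactly when $3/\alpha + \beta > 3/2$, i.e.\ when $\alpha < 6/(3-2\beta)$. The proof will therefore reduce to reading off $\beta$ from the correct case of Theorem~\ref{main thm} in each of the three bullets and simplifying the resulting threshold. In bullet~(i), the assumption $1/q+1/r<1/2$ forces $q>2$, placing us in the case $p\le 2\le q$, $1/p+1/q+1/r\ge 1$, so $\beta = 1/2-1/q-1/r$ and $6/(3-2\beta)=3/(1+1/q+1/r)$. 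In bullet~(ii), $p<2$ together with $1/p+1/q+1/r\le 1$ forces $1/q+1/r<1/2$ and hence $q>2$, placing us in the case $p\le 2\le q$, $1/p+1/q+1/r\le 1$, so $\beta = 1/p-1/2>0$ and $6/(3-2\beta)=3/(2-1/p)$. In bullet~(iii), $1/p+1/q+1/r<1/2$ forces $p>2$, placing us in the case $p\ge 2$, so $\beta = 1/2-1/p-1/q-1/r$ and $6/(3-2\beta)=3/(1+1/p+1/q+1/r)$. These thresholds are exactly the ones in the statement.

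No serious obstacle is anticipated: all substantial work is done by Theorem~\ref{main thm}. One need only verify that $\beta>0$ holds strictly in each bullet, which is precisely what the strict inequalities in the hypotheses ensure, and observe that the logarithmic factor from Bourgain--Milman is harmlessly absorbed by any positive polynomial power of $n$. We note that the same route yields no conclusion in the regimes $r\le 2$ or $p\le q\le 2\le r$ with no further assumption, since there Theorem~\ref{main thm} only gives $\vr(\X)\asymp 1$ (respectively only a matching lower bound of $1$), so no positive exponent $\beta$ is available.
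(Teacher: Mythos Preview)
Your proposal is correct and follows essentially the same approach as the paper: the paper's argument is precisely the paragraph preceding the corollary, combining the Bourgain--Milman bound with the comparison $C_\alpha(\X)\gtrsim n^{3(1/\alpha-1/2)}C_2(\X)$ obtained from \cite{TJ79} and H\"older, and then reading off the exponents $\beta$ from Theorem~\ref{main thm}. Your write-up simply makes the case analysis and the computation of the thresholds $6/(3-2\beta)$ explicit, which the paper leaves to the reader.
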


Finally, we remark that in the case of $k$-fold tensor products, if we now let
\begin{align*}
X := \ell_{p_1}\otimes_{\pi} \dots \otimes_{\pi} \ell_{p_k},
\end{align*}
where $1 \le p_1 \le \dots \le p_k \le \infty$. If $X_\p^n$ is defined as in~\eqref{def k fold}, then a similar argument as above implies that $C_\alpha(X_{\p}^n) \gtrsim n^{k\left(\frac 1 {\alpha} - \frac 1 2\right)}C_2(X_{\p}^n)$. Thus, using Theorem~\ref{thm: k fold}, a similar result to Corollary~\ref{cor cotype} can be obtained. The result reads as follows.

\begin{cor}
        Let $X = \ell_{p_1}\otimes_{\pi}\dots \otimes_{\pi}\ell_{p_k}$ with $1 \le p_1 \le \dots \le p_k \le \infty$. Then, $C_{\alpha}(X) = \infty$ in the following cases:
        \renewcommand\labelitemi{\tiny$\bullet$}
        \begin{itemize}
                \item $\sum_{j=1}^k \frac 1 {p_j} \ge 1$ and $\sum_{j=2}^k \frac 1 {p_j} < \frac 1 2$ and for $\alpha < \frac{k}{\frac{k-1}2 + \sum_{j=2}^k\frac 1 {p_j}}$\,;
                \item $\sum_{j=1}^k\frac 1 {p_j} \le 1$ and $p_1<2$ and for $\alpha < \frac{k}{\frac{k+1}{2}-\frac 1{p_1}}$\,;
                \item $\sum_{j=1}^k \frac 1 {p_j} <\frac 1 2$ and for $\alpha < \frac{k}{\frac{k-1}{2}+\sum_{j=1}^n\frac 1 {p_j}}$\,.
        \end{itemize}
\end{cor}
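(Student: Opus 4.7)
The strategy mirrors the derivation of Corollary~\ref{cor cotype}. Fix $\alpha\geq 2$. For each $n\in\N$ the finite-dimensional space $X_\p^n$ embeds isometrically into $X$, hence $C_\alpha(X)\geq C_\alpha(X_\p^n)$, and it suffices to show that $C_\alpha(X_\p^n)\to\infty$ as $n\to\infty$ under each of the three sets of hypotheses.

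I would combine three ingredients already available: (i) the H\"older-type inequality noted in the paragraph preceding this corollary, which in the $k$-fold setting reads
\[
C_\alpha(X_\p^n)\gtrsim n^{k\left(\frac{1}{\alpha}-\frac{1}{2}\right)}C_2(X_\p^n);
\]
(ii) the Bourgain--Milman inequality (Theorem~\ref{thm BM}) in the absorbed form $C_2(Y)\gtrsim_\delta \vr(Y)^{1-\delta}$, valid for any fixed $\delta>0$ once $\vr(Y)$ is large enough, obtained by swallowing the logarithmic factor into an arbitrarily small polynomial loss; and (iii) the lower bounds on $\vr(X_\p^n)$ supplied by Theorem~\ref{thm: k fold}.

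Chaining these inequalities gives, for any $\delta>0$,
\[
C_\alpha(X_\p^n) \;\gtrsim_{\p,\delta}\; n^{k\left(\frac{1}{\alpha}-\frac{1}{2}\right)+(1-\delta)\beta(\p)},
\]
where $\beta(\p)$ denotes the exponent of $n$ in the matching lower bound of Theorem~\ref{thm: k fold}. The right-hand side diverges as $n\to\infty$ (for $\delta$ small enough) whenever
\[
k\left(\frac{1}{\alpha}-\frac{1}{2}\right)+\beta(\p)\;>\;0, \qquad\text{i.e.,}\qquad \alpha\;<\;\frac{k}{k/2-\beta(\p)}.
\]
The remainder is then pure bookkeeping: for each bullet I would read off $\beta(\p)$ from the relevant case of Theorem~\ref{thm: k fold} and simplify. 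Case~(4) of that theorem, corresponding to $\sum_{j=1}^k 1/p_j\geq 1$ and $\sum_{j=2}^k 1/p_j\leq 1/2$, yields $\beta(\p)=\tfrac{1}{2}-\sum_{j=2}^k\tfrac{1}{p_j}$, producing the first bullet; case~(5), which applies automatically under the second bullet's hypotheses since $p_1<2$ and $k\geq 3$ force $\sum_j 1/p_j\geq 1/p_1>1/2$, yields $\beta(\p)=\tfrac{1}{p_1}-\tfrac{1}{2}$, producing the second bullet; and case~(6), $\sum_j 1/p_j\leq 1/2$, yields $\beta(\p)=\tfrac{1}{2}-\sum_{j=1}^k\tfrac{1}{p_j}$, producing the third bullet.

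The only genuinely delicate step is (ii), the absorption of the logarithmic factor in the Bourgain--Milman bound. This is harmless, however, because the hypotheses of each bullet force $\beta(\p)>0$ \emph{strictly}, so taking $\delta>0$ sufficiently small preserves the divergence of $C_\alpha(X_\p^n)$ without altering the threshold on $\alpha$. I expect no additional obstacle: once $\beta(\p)$ is identified, all three cases collapse to the same template.
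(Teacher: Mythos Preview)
Your proposal is correct and follows essentially the same route as the paper, which merely states that ``a similar result to Corollary~\ref{cor cotype} can be obtained'' from Theorem~\ref{thm: k fold} and the inequality $C_\alpha(X_\p^n)\gtrsim n^{k(1/\alpha-1/2)}C_2(X_\p^n)$. Your explicit $\delta$-absorption of the logarithm in the Bourgain--Milman bound is exactly the kind of detail the paper leaves to the reader, and your identification of $\beta(\p)$ from cases (4), (5), (6) of Theorem~\ref{thm: k fold} matches the three bullets verbatim.
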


\end{document}